\documentclass[11pt]{amsart}
\usepackage[numbers,square]{natbib}
\usepackage{amsmath,amssymb,amsfonts,amsthm}
\usepackage{geometry}
\usepackage{hyperref}
\theoremstyle{plain}
\newtheorem{theorem}{Theorem}
\newtheorem{lemma}{Lemma}
\newtheorem{proposition}{Proposition}
\newtheorem{corollary}{Corollary}
\newtheorem{remark}{Remark}
\numberwithin{equation}{section}
\newcommand{\R}{\mathbb R}
\newcommand{\dd}{\,dx}
\newcommand{\cG}{\mathcal G}
\newcommand{\cE}{\mathcal E}
\allowdisplaybreaks[4]

\begin{document}
	
	\title[Quantitative uniqueness for bi-Laplace equations]{Quantitative uniqueness for bi-Laplace equations with $C^2$ lower-order coefficients}
	\author{Zhiwei Wang}
	\address{Department of Mathematics, University of Science and Technology of China, Anhui Hefei, China}
	
	\begin{abstract}
		We study quantitative unique continuation for bi-Laplace equations
		\[
		\Delta^2u+W(x)\Delta u+V(x)\cdot\nabla u=0
		\]
		with $C^2$ lower-order coefficients. Using the weighted-frequency function, we establish an
		$L^\infty$ three-ball inequality whose exponential factor has
		coefficient dependence $\|W\|_{C^2}^{1/2}+\|V\|_{C^2}^{1/3}$.
		Under some normalization, we obtain a
		vanishing-order bound of size
		$C(\|W\|_{C^2}^{1/2}+\|V\|_{C^2}^{1/3})$.
	\end{abstract}
	
	\subjclass[2020]{35B60, 35J30, 35J40, 35J48}
	\keywords{Quantitative unique continuation, bi-Laplace equations, frequency functions, three-ball inequality}
	\maketitle
	
	\section{Introduction}
	\label{sec:introduction}
	
	All of us know a basic complex analysis fact: Analytic function can only possesses isolated zero points. Which directly implies, if an analytic function $u$ vanishes identically in an open subset $\Sigma\subset \Omega$, then $u$ must be trivial in a connected component of $\Omega$ containing $\Sigma$. Also, by Taylor expansion, if we assume for some $x_0 \in \Omega$ and for all $\alpha$, $D^\alpha u(x_0)=0$ holds, then $u$ is trivial. Exactly, we call the first weak unique continuation property (WUCP), and call the second strong unique continuation property (SUCP). If we consider the above property a little bit more, then we can find both of the WUCP and SUCP are false for general smooth functions. For example, the standard mollifier do not possess any of them.
	However, if we are considering $u$ as a solution of liner equation $\mathcal{L}u$=0, then strong unique continuation is known under substantially weaker assumptions
	than smoothness of the coefficients; see, for example,
	\cite{JerisonKenig1985,KochTataru2001}. 
	
	The SUCP is a qualitative property, which means we infer $u=0$ by assuming infinite order of vanishing. Quantitative unique continuation asks for a more precise
	description of this property. Exactly, if a solution is non-trivial, we wonder how large can its vanishing order be, and what is the accurate relation between this bound and the
	size, regularity of the coefficients. For a smooth solution, its order of vanishing at $x_0$ is
	\[
	\min\{\lvert\beta\rvert:D^\beta u(x_0)\neq0\},
	\]
	where $\beta$ is a multi-index; the order is $+\infty$ if every derivative
	of $u$ at $x_0$ vanishes. Equivalently, for the finite vanishing order solution $u$, we write its order at $x_0$ as $\operatorname{ord}_{x_0}u$, and define it to be the maximum of $\alpha$, such that $\limsup_{r\to0}\frac{\|u\|_{L^\infty(B_r(x_0))}}{r^{\alpha}}< +\infty.$
	
	A useful way to obtain a vanishing-order estimate is to establish a
	three-ball inequality. For $0<r_1<r_2<r_3$, such an inequality takes the form
	\begin{equation*}		\|u\|_{L^2(B_{r_2})}
		\leq \exp(\Theta)\|u\|_{L^2(B_{r_1})}^{\theta}
		\|u\|_{L^2(B_{r_3})}^{1-\theta},
		\qquad 0<\theta<1,
	\end{equation*}
	where $L^2$ norm can be replaced by $L^\infty$ norm by using elliptic regularity. Once we obtain such a three-ball inequality, we can quickly reach a lower bound of the form
	\begin{equation*}		\|u\|_{L^\infty(B_r(x_0))}\geq c r^{C\Theta},
		\qquad 0<r<r_0<1,
	\end{equation*}
	which implies $\operatorname{ord}_{x_0}u\leq C\Theta$. We search for the explicit relation between $\Theta$ and the coefficients of equation. There are two standard
	approaches to these estimates: Carleman estimates and frequency functions.
	Carleman estimates apply to many equations with rough coefficients.
	Frequency functions relate unique continuation to the growth of weighted
	integrals of the solution. The frequency function method goes back to
	Almgren and was developed for elliptic equations by Garofalo and Lin
	\cite{GarofaloLin1986}. Weighted frequency functions with a free parameter
	have proved particularly useful for quantitative estimates; see
	\cite{Kukavica2000,Zhu2016,Davey2025}.
	
	The investigation of vanishing order is closely related to volume estimates
	for nodal, critical, and singular sets
	\cite{NaberValtorta2017,HuangJiang2023} and to spectral inequalities
	\cite{ZhuSpectral2026}. A classical quantitative problem
	arises from Laplace eigenfunctions on a smooth closed $n$-dimensional
	Riemannian manifold $(\mathcal M,g)$:
	\begin{equation*}		-\Delta_g\psi_\lambda=\lambda\psi_\lambda.
	\end{equation*}
	Donnelly and Fefferman proved that the vanishing order is bounded by
	$C\lambda^{1/2}$ \cite{DonnellyFefferman1988}. Spherical harmonics show that
	this power of $\lambda$ is sharp. 
	For a nonzero real-valued eigenfunction, write
	$Z_\lambda=\{x\in\mathcal M:\psi_\lambda(x)=0\}$. Yau conjectured
	\cite{Yau1982} that
	\begin{equation}\label{eq:yau-nodal-conjecture}
		c\lambda^{1/2}
		\leq \mathcal H^{n-1}(Z_\lambda)
		\leq C\lambda^{1/2},
	\end{equation}
	where the positive constants depend only on $(\mathcal M,g)$. Donnelly and
	Fefferman proved \eqref{eq:yau-nodal-conjecture} for real-analytic metrics
	\cite{DonnellyFefferman1988}. For smooth manifolds of dimension $n\geq3$,
	Logunov proved that there is an exponent $\alpha_n>1/2$, depending only on
	$n$, such that
	\begin{equation*}		\mathcal H^{n-1}(Z_\lambda)
		\leq C\lambda^{\alpha_n}.
	\end{equation*}
	This polynomial upper bound is obtained through quantitative growth and
	propagation-of-smallness estimates \cite{Logunov2018}. 
	
	For the Schr\"odinger equation
	\begin{equation*}		-\Delta u+q(x)u=0,
	\end{equation*}
	Bourgain and Kenig obtained a vanishing-order bound with coefficient
	dependence $\|q\|_{L^\infty}^{2/3}$ under suitable normalization, in their
	study of the continuous Anderson--Bernoulli model
	\cite{BourgainKenig2005}. Meshkov's example explains the sharpness of this
	power in the complex-valued setting \cite{Meshkov1992}. Additional
	regularity of the potential improves the estimate. For $C^1$ potentials,
	Bakri and Zhu independently obtained the square-root dependence
	$(1+\|q\|_{C^1})^{1/2}$ \cite{Bakri2012,Zhu2016}, in agreement with
	the eigenfunction estimate. For H\"older continuous potentials, the bound
	$(1+\|q\|_{C^{0,\beta}})^{2/(3+\beta)}$, $0<\beta<1$, was obtained by the author and his collaborators in
	\cite{TengWangZhu2026}, which fills the gap and connects the two end point results. Quantitative uniqueness with singular lower order
	terms and near the boundary has also been studied; see
	\cite{DaveyZhu2019,BanerjeeGarofalo2016,LiWang2023}.

	For higher order elliptic equations, the sharp vanishing order result is less complete. The
	strong unique continuation property has been established for powers of
	the Laplacian and products of second order elliptic operators under
	appropriate assumptions; see \cite{Lin2007,ColombiniKoch2010}. It is
	therefore natural to study how the vanishing order depends on the size
	and regularity of the potential. The bi-Laplace operator provides a
	simple model and occurs in plate bending, elasticity, and fluid dynamics.
	For equation
	\begin{equation}\label{eq:intro-bilaplace-potential}
		\Delta^2u+q(x)u=0,
	\end{equation}
	Zhu
	\cite{Zhu2016} used a variant of the frequency function to obtain a
	vanishing-order bound $C(1+\|q\|_{L^\infty})$ in dimensions $n\geq8$.
	Later, using Carleman estimates, Zhu \cite{Zhu2018} obtained the improved
	bound $C(1+\|q\|_{L^\infty}^{1/3})$ for bounded potentials when $n\neq6$.
	In the critical dimension $n=6$, the cited theorem gives
	$C_\varepsilon(1+\|q\|_{L^\infty}^{1/[3(1-\varepsilon)]})$ for every
	sufficiently small $\varepsilon>0$. Using a frequency function approach,
	Liu, Tian, and Yang \cite{LiuTianYang2025} proved that, for
	$W^{1,\infty}$ potentials, the vanishing order of solutions to
	\eqref{eq:intro-bilaplace-potential} is bounded by
	\[
	C\bigl(1+\|q\|_{L^\infty}^{1/4}+\|\nabla q\|_{L^\infty}\bigr).
	\]
	Here the constants depend on the dimension and the normalization of the
	solution. Doubling inequalities and nodal set estimates for bi-Laplace equations
	were studied in \cite{Zhu2019}, and related eigenvalue problems were
	considered in \cite{LinZhu2022}.
	
	In this paper, we study the quantitative unique continuation, or quantitative
	uniqueness, for bi-Laplace equations with lower order terms. We focus on the problem in form
	\begin{equation}\label{eq:intro-equation}
		\Delta^2u+W(x)\Delta u+V(x)\cdot\nabla u=0
		\qquad\text{in }B_R\subset\mathbb R^n,
	\end{equation}
	where $R\geq20$, $n\geq2$, $W$ is a real-valued function, and $V$ is a
	real-valued vector field. We assume that $W$ and the components of $V$ are
	of class $C^2$, and consider real-valued smooth solutions. Our purpose is to
	obtain three-ball inequalities and vanishing-order estimates which describe
	the dependence on the sizes of $W$ and $V$.
	
	Of particular relevance to \eqref{eq:intro-equation} is the work of Fu and
	Liao \cite{FuLiao2021}. They considered bi-Laplace equations containing
	zeroth, first, second, and third order terms, including a scalar multiple
	of $\Delta u$. For bounded coefficients, their estimates give the powers
	$\|W\|_{L^\infty}^{2/3}$ and $\|V\|_{L^\infty}^{1/2}$ for the two terms
	in \eqref{eq:intro-equation}. More recently, the author and his collaborators
	\cite{TengWangZhuBilaplace2026} used weighted frequency functions to
	prove three-ball inequalities and vanishing-order bounds for
	\eqref{eq:intro-bilaplace-potential} with bounded and H\"older continuous
	potentials. They recovered the bound $C(1+\|q\|_{L^\infty}^{1/3})$
	for bounded potentials. Combining the frequency function argument
	with the mollification method in \cite{TengWangZhu2026}, they obtained
	the bound $C(1+\|q\|_{C^{0,\beta}}^{1/4})$ for H\"older continuous potentials.
	
	The present paper concerns the derivative terms $W\Delta u$ and
	$V\cdot\nabla u$. We assume $W,V\in C^2$ and obtain vanishing-order
	estimates like $\|W\|_{C^2}^{1/2}+\|V\|_{C^2}^{1/3}$, under the normalization stated below.
	It would be interesting to combine this argument with mollification,
	as in \cite{TengWangZhu2026,TengWangZhuBilaplace2026}. We leave the
	treatment of H\"older-continuous lower-order coefficients by this approach
	for future work.
	
	We now state the main results. Assume that
	\begin{equation}\label{eq:intro-coefficient-bounds}
		\|W\|_{C^2(B_R)}\leq \mathbb M,\qquad
		\|V\|_{C^2(B_R)}\leq \mathbb G,
		\qquad \mathbb M,\mathbb G\geq1,
	\end{equation}
	and set
	\begin{equation}\label{eq:intro-K}
		\mathbb K:=2+\mathbb M^{1/2}+\mathbb G^{1/3}.
	\end{equation}
	
	\begin{theorem}\label{thm:main-threeball}
		Let $u$ solve \eqref{eq:intro-equation} under
		\eqref{eq:intro-coefficient-bounds}. There exists
		$0<\theta_0<1$, depending on $n,R,\mathbb M,\mathbb G$, such that
		\begin{equation}\label{eq:main-threeball}
			\|u\|_{L^\infty(B_{3r})}
			\leq \exp\bigl(C(\mathbb M^{1/2}+\mathbb G^{1/3})\bigr)
			\|u\|_{L^\infty(B_r)}^{\theta_0}
			\|u\|_{L^\infty(B_{30r})}^{1-\theta_0}
		\end{equation}
		for every $0<r\leq R/40$, where $C$ depends only on $n$ and $R$.
	\end{theorem}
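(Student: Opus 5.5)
We will follow the weighted-frequency approach for bi-Laplace equations, adapted to derivative lower-order terms. The first step is to split the fourth-order equation into a second-order system. Set $v:=\Delta u$, so that
\[
\Delta u=v,\qquad \Delta v=-Wv-V\cdot\nabla u .
\]
The plan is to treat the pair $(u,v)$ with a scaled weight. Fix a parameter $\lambda\ge1$, to be chosen comparable to $\mathbb K$. For $r\le R/2$ define
\[
H(r)=\int_{\partial B_r}\bigl(\lambda^{4}u^2+\lambda^{2}|\nabla u|^2+v^2\bigr)\,d\sigma .
\]
One could equivalently use a version of $H$ with a smooth radial weight $(r^2-|x|^2)^{\alpha}$, as in the cited weighted-frequency works. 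The matching energy $D(r)$ is obtained by the Rellich/Pohozaev identities for each component. The powers of $\lambda$ are chosen so that every lower-order term scales correctly.

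\begin{itemize}
\item The term $Wv\cdot v$ produces an error of size $\mathbb M\, r^2 H$. It is harmless once $\lambda^2\gtrsim \mathbb M$.
\item The term $V\cdot\nabla u\, v$ is bounded by $\mathbb G\,\|\nabla u\|\,\|v\|$. Through the weights this becomes $\mathbb G\lambda^{-1}\cdot(\text{energy})$, which is absorbable once $\lambda^3\gtrsim\mathbb G$.
\end{itemize}

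This is exactly where the exponents $1/2$ and $1/3$ in $\mathbb K$ arise.

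The key use of the $C^2$ hypothesis is in $\nabla u$. The equation for $\nabla u$ is obtained by differentiating: $\Delta(\partial_j u)=\partial_j v$. To close the system one needs $\Delta(\partial_j v)=-\partial_j(Wv+V\cdot\nabla u)$. This expression contains $\nabla W$, $\nabla V$, $\nabla^2u$ and $\nabla v$. In the Pohozaev identity for $v$, the term $\langle x,\nabla v\rangle\,\Delta v$ must be integrated by parts. Moving the derivative onto the coefficients generates up to two derivatives of $W$ and $V$. With $C^2$ bounds these terms are again of size $\mathbb M$, respectively $\mathbb G$, times quantities controlled by $H$ and $D$, and they are absorbed just as above.

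The outcome should be an almost-monotonicity statement for the frequency $N(r)=rD(r)/H(r)$:
\[
N(r)+C\mathbb K\ \text{is nondecreasing (up to a factor } e^{Cr}\text{)}.
\]
Equivalently, $\frac{d}{dr}\log H(r)=\frac{n-1}{r}+\frac{2N(r)}{r}+O(\mathbb K)$.

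From this we will derive the three-ball inequality in the standard way. Integrating $\frac{d}{dr}\log H$ between the radii $r$, $3r$, $30r$ and using monotonicity of $N$ gives the $L^2$-type three-sphere inequality for $H$. The error contributes a factor $\exp(C\mathbb K r)\le \exp(C\mathbb K)$, since $r\le R/40$ and $C$ depends on $R$. The Hadamard exponent $\theta_0$ depends only on the ratios $\log 3/\log 30$, up to the $\lambda$-normalization.

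Next we pass from $H$ to $L^\infty$ norms of $u$. The $\lambda$-weighted quantity is compared with $\lambda^4\|u\|^2$ from below, trivially. From above, we use interior elliptic estimates for $\Delta^2+W\Delta+V\cdot\nabla$ on scale $\sim 1/\mathbb K$. A rescaled Caccioppoli argument gives
\[
\|\nabla u\|,\ \|\Delta u\|\le C\mathbb K^{k}\|u\|_{L^\infty(\text{larger ball})}.
\]
The resulting polynomial factors $\mathbb K^{C}$ and $\lambda^{C}$ are absorbed into $\exp(C\mathbb K)$. The slight ball enlargement is handled by working with intermediate radii between $r,3r,30r$, for example $2r$ and $20r$. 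Averaging $H$ over spheres yields ball integrals, and sup bounds follow by local boundedness. Since $\mathbb K\lesssim \mathbb M^{1/2}+\mathbb G^{1/3}$ (as $\mathbb M,\mathbb G\ge1$), this produces the claimed constant.

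The main obstacle will be the monotonicity computation. Specifically, one must check that the cross terms from $V\cdot\nabla u$ and from the integration by parts of $\nabla W$ and $\nabla V$ are really bounded by $\mathbb K$ times the frequency quantities with the chosen weights. There is a danger that the scaling of $\nabla^2u$ forces a worse power, for instance $\mathbb G^{1/2}$. If the plain weights fail, the first remedy is to add a weight parameter $\alpha\sim\mathbb K$ via $(r^2-|x|^2)^\alpha$. This trades the absorption conditions for an exponent loss of order $\alpha$, which is still $\exp(C\mathbb K)$.
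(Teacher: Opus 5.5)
There is a genuine gap. The almost-monotonicity of the frequency carries the whole proof, and you assert it (``the outcome should be'') rather than derive it. The bookkeeping you offer also does not support the claimed $O(\mathbb K)$ error.

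By your own count, the $W$-term produces an error $\mathbb M r^2H$, i.e. $\mathbb M r^2$ in $N$; at $r\sim1$ this is $\mathbb K^2$, not $\mathbb K$. Choosing $\lambda$ large does not help. Errors enter $(\log H)'$ additively, and nothing forces the frequency to be $\gtrsim\mathbb K$. Concretely, with your $H$:
\begin{itemize}
\item $H'$ contains $-2\int_{B_r}Wv^2$. This term carries no power of $\lambda$, and its natural bound is $\mathbb M rH$.
\item The coupling $\lambda^4\int_{B_r}uv$, which comes from $\int_{\partial B_r}uu_\nu$, has relative size $\lambda^2r$.
\item The drift term $\int_{B_r}v\,V\cdot\nabla u$ has relative size $\mathbb G r/\lambda$.
\end{itemize}
With $\lambda\sim\mathbb K$, each of these is of order $\mathbb K^2 r$.

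The fix is the idea you list only as a fallback, and in the paper it is the core of the argument. One takes $\mathbb H(r)=\int_{B_r}(u^2+\lambda r^2w^2)\omega_r^{\alpha-1}\,dx$ with $\alpha=\lceil\mathbb K\rceil$ and $\lambda=\mathbb K^{-4}$. Then $(\log\mathbb H)'=\frac{2\alpha+n-2}{r}+\frac{\mathbb N}{\alpha r}$. So errors in $\mathbb N$ such as $W_0r^2$ and $r/\sqrt\lambda$ are divided by $\alpha$ and cost only $O(\mathbb K)$ per unit length; this is the bound $F(\alpha,\lambda)\le C\mathbb K$. The large term $2\alpha/r$ cancels in the Hadamard combination, since $\kappa c_1-(1-\kappa)c_2\le0$. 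Returning to unweighted norms costs $(2\alpha-2)\log(r_3/r_2)\lesssim\mathbb K$.

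The $C^2$ hypothesis enters when differentiating $\lambda r^2\int w\Delta w\,\omega_r^\alpha$, not through an equation for $\nabla u$. That derivative needs the expansion of $\Delta(w\Delta w)$, which involves $\Delta W$, $\Delta V$, $w\nabla W\cdot\nabla w$ and $\partial_iV^j\partial_{ij}u$. The last term is controlled by the weighted Hessian bound. Your $(u,\nabla u,v)$ system is also inconsistent as set up: $\Delta\partial_jv$ is needed only if $\nabla v$ is tracked, and $\nabla v$ is not in your $H$.

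Your claim that $\theta_0$ depends only on $\log3/\log30$ is unsupported and stronger than what the paper obtains. In the paper $A\ge c/(\alpha\sqrt\lambda)=c\mathbb K$ and $A\le C(\mathbb K+\mathbb K^2s)$, so after rescaling one only gets $\kappa_r\ge ce^{-C\mathbb K^2}$. A dimensional $\theta_0$ is obtained only for $r\le\mathbb K^{-1}$, as in the remark after the proof.

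Because the theorem lets $\theta_0$ depend on $\mathbb M,\mathbb G$, the prefactor is in fact cheap. Set $X=\|u\|_{L^\infty(B_r)}$ and $Y=\|u\|_{L^\infty(B_{30r})}$. Suppose $\|u\|_{L^\infty(B_{3r})}\le C_1X^\theta Y^{1-\theta}$ holds uniformly in $r$, with coefficient-dependent $C_1\ge1$ and $\theta$. Combining this with the trivial bound $\|u\|_{L^\infty(B_{3r})}\le Y$ gives $\|u\|_{L^\infty(B_{3r})}\le C_1^sX^{s\theta}Y^{1-s\theta}$, where $s=1/\max\{1,\log C_1\}$ and so $C_1^s\le e$. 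So the powers you worry about are not the real obstacle for this statement. What is missing is a rigorously proved three-ball inequality, uniform in $r\le R/40$ after rescaling, and the proposal does not supply one.
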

	
	The exponent $\theta_0$ in this statement may depend on the coefficient
	bounds. On the smaller range $0<r\leq\min\{R/40,\mathbb K^{-1}\}$, it can be
	chosen to depend only on $n$ and $R$; see the remark at the end of
	Section~\ref{sec:threeball}.
	
	To study the vanishing order, we assume the normalization
	\begin{equation}\label{eq:normalization-final}
		\|u\|_{L^\infty(B_{10})}\leq\mathcal M,
		\qquad \|u\|_{L^\infty(B_{r_*})}\geq1,
		\qquad \mathcal M\geq1,
	\end{equation}
	where
	\begin{equation*}		r_*:=\frac{1}{100\mathbb K}.
	\end{equation*}
	We have the following quantitative uniqueness result.
	
	\begin{theorem}\label{thm:quantitative-vanishing-order-final}
		Let $u$ solve \eqref{eq:intro-equation} under
		\eqref{eq:intro-coefficient-bounds} and \eqref{eq:normalization-final}.
		Then, for every $x_0\in B_{r_*/2}$,
		\[
		\operatorname{ord}_{x_0}u
		\leq C\bigl(\mathbb M^{1/2}+\mathbb G^{1/3}+\log\mathcal M\bigr),
		\]
		where $C$ depends only on $n$.
	\end{theorem}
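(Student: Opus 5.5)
We derive Theorem~\ref{thm:quantitative-vanishing-order-final} from the three-ball inequality of Theorem~\ref{thm:main-threeball}. We apply it at small scales with a coefficient-independent exponent, as allowed by the remark at the end of Section~\ref{sec:threeball}. Fix $x_0\in B_{r_*/2}$. We apply the three-ball inequality centred at $x_0$. This is legitimate: $u(\cdot+x_0)$ solves an equation of the same form, with translated coefficients obeying the same $C^2$ bounds on a ball of radius $R-|x_0|\geq R-1$. Since $R\geq 20$, the constants change only harmlessly; we fix a reference $R$, say $R=20$, so that $C$ and $\theta_0$ depend only on $n$. On the range $0<r\leq \mathbb K^{-1}/40$, the inequality reads
\[
\|u\|_{L^\infty(B_{3r}(x_0))}\leq e^{C\mathbb K}\,\|u\|_{L^\infty(B_r(x_0))}^{\theta_0}\,\|u\|_{L^\infty(B_{30r}(x_0))}^{1-\theta_0},
\]
with $\theta_0=\theta_0(n)$. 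Rearranging gives
\[
\|u\|_{L^\infty(B_r(x_0))}\geq e^{-C\mathbb K/\theta_0}\,\|u\|_{L^\infty(B_{3r}(x_0))}^{1/\theta_0}\,\mathcal M^{-(1-\theta_0)/\theta_0},
\]
where we used $\|u\|_{L^\infty(B_{30r}(x_0))}\leq\|u\|_{L^\infty(B_{10})}\leq\mathcal M$, valid because $30r+|x_0|<10$.

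Next we iterate. Set $r_0:=r_*$, so $B_{r_*}\subset B_{3r_*/2}(x_0)$ and hence $\|u\|_{L^\infty(B_{3r_0/2}(x_0))}\geq1$. To avoid shrinking by exactly $3$ we use the variant with radii $r,3r$ after rescaling. Alternatively, start from $\rho_0=r_*/2$: then $B_{3\rho_0}(x_0)\supset B_{r_*}$, so $\|u\|_{L^\infty(B_{3\rho_0}(x_0))}\geq1$. Also $\rho_0\leq \mathbb K^{-1}/40$, since $r_*=1/(100\mathbb K)$.

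Define $\rho_k=3^{-k}\rho_0$ and $a_k=-\log\|u\|_{L^\infty(B_{\rho_k}(x_0))}$. The inequality gives $a_{k+1}\leq \theta_0^{-1}(a_k+A)$ with $A:=C\mathbb K+\log\mathcal M$ and $a_{-1}\leq0$. Hence $a_k\leq C'\,\theta_0^{-k}A$, i.e.
\[
\|u\|_{L^\infty(B_{\rho_k}(x_0))}\geq \exp\bigl(-C\theta_0^{-k}A\bigr).
\]
This geometric-in-$k$ decay is the wrong shape: it gives a bound of the form $\exp(-Cr^{-\gamma})$, not a power of $r$. This is the main obstacle. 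The usual fix is to avoid iterating the three-ball inequality at the same centre.

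The fix uses the doubling structure underlying the frequency argument. I expect Section~\ref{sec:threeball} to contain a frequency monotonicity showing that the weighted frequency $N(r)$ at $x_0$ is almost nonincreasing as $r\downarrow0$ on $r\leq c\mathbb K^{-1}$, up to additive $C\mathbb K$. We bound $N$ at the top scale $\rho_0$ by $C(\mathbb K+\log\mathcal M)$. To do so, compare the $L^2$ norm on $B_{\rho_0}(x_0)$, which is at least $e^{-CA}$ by one application of the three-ball step, with the norm on $B_{c}(x_0)\subset B_{10}$, which is at most $\mathcal M$. The scales between $\rho_0$ and $c$ are handled by a single three-ball chain of length $O(\log\mathbb K)$ with $\theta$ depending on $\mathbb K$ (Theorem~\ref{thm:main-threeball}). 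Its cost is absorbed as $C\mathbb K$, because $\mathbb K\log \mathbb K$ would be too large; this is the delicate point, and I would try to use the frequency directly there rather than the three-ball inequality.

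Monotonicity then gives the doubling estimate $\|u\|_{L^2(B_{2r}(x_0))}\leq 2^{C(\mathbb K+\log\mathcal M)}\|u\|_{L^2(B_r(x_0))}$ for all $r<\rho_0$. Iterating the doubling estimate from $\rho_0$ down yields $\|u\|_{L^\infty(B_r(x_0))}\geq c\,r^{C(\mathbb K+\log\mathcal M)}$, using elliptic $L^2$--$L^\infty$ estimates at scales $\leq\mathbb K^{-1}$, where the lower-order terms are perturbative. By the definition of $\operatorname{ord}_{x_0}u$, this gives the claimed bound $\operatorname{ord}_{x_0}u\leq C(\mathbb M^{1/2}+\mathbb G^{1/3}+\log\mathcal M)$.
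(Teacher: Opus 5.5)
Your first iteration is rightly discarded, and the frequency/doubling route you switch to can work. As written, though, it has a gap exactly at the step you call delicate: the top-scale bound $\mathbb N(\rho)/\alpha\le C(\mathbb K+\log\mathcal M)$ is never established, and the mechanism you sketch for it would fail. Comparing $B_{\rho_0}(x_0)$ with a ball $B_c(x_0)$ of fixed size takes you outside the natural scale $\mathbb K^{-1}$. There, by \eqref{eq:Astar-lower-upper-optimized}, $A(s)$ can be of size $\mathbb K^2 s$, so $\Phi(\rho_0,c)$ may be of order $\mathbb K^2$ and the lower comparison \eqref{eq:N-comparison-lower} becomes useless. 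The weights alone already cost $(c/\rho_0)^{2\alpha}=e^{C\mathbb K\log\mathbb K}$. A chain of applications of Theorem~\ref{thm:main-threeball} across these scales is worse: there $\theta_0$ is only of size $c\,e^{-C\mathbb K^2}$, and each step raises the accumulated loss to the power $1/\theta_0$. So the claim that this cost is absorbed as $C\mathbb K$ is unjustified.

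The missing observation is that you never need to leave the natural scale, because the normalization \eqref{eq:normalization-final} is already two-sided at comparable radii. Center everything at $x_0$.
\begin{itemize}
\item Since $B_{r_*}\subset B_{3r_*/2}(x_0)$, Lemma~\ref{lem:local-Linfty-L2-new} gives $\|u\|_{L^2(B_{2r_*}(x_0))}\ge c\,r_*^{n/2}$, hence $\mathbb H(4r_*)\ge(12r_*^2)^{\alpha-1}c\,r_*^{n}$.
\item Since $B_{9r_*}(x_0)\subset B_{10}$, Lemma~\ref{lem:local-W22-Linfty-new} gives $\mathbb H(8r_*)\le(8r_*)^{2\alpha-2}C\mathcal M^2$.
\item Hence $\log(\mathbb H(8r_*)/\mathbb H(4r_*))\le C(\mathbb K+\log\mathcal M)$. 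On $[4r_*,8r_*]$ you have $\Phi\le C$ and $\int P\le C\mathbb K^2$ by \eqref{eq:natural-scale-A-P}. Integrating \eqref{eq:log-H-true-frequency} there and using \eqref{eq:N-comparison-lower} yields $\mathbb N(4r_*)/\alpha\le C(\mathbb K+\log\mathcal M)$.
\item From there, \eqref{eq:N-comparison-upper} and \eqref{eq:log-H-true-frequency} give $\mathbb H(r/2)\ge\mathbb H(4r_*)\,(r/(8r_*))^{C(\mathbb K+\log\mathcal M)}$.
\end{itemize}
This is doubling for the weighted $\mathbb H$, not for $\|u\|_{L^2}$: the $\lambda r^2w^2$ part is not uniformly controlled by $\int u^2$ when $r\ll\mathbb K^{-2}$. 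So conclude instead via $\mathbb H(r/2)\le h(r/2)\le C\|u\|_{L^\infty(B_r(x_0))}^2$, using Lemma~\ref{lem:local-W22-Linfty-new}.

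With this repair your argument is correct, and it is the paper's computation unpacked. The paper applies Proposition~\ref{prop:Linfty-threeball-corrected} once, with outer radii $4r_*,16r_*$ fixed at the natural scale and inner radius $r/2\to0$. Since $\int A\le C$ on $[r/2,16r_*]$, one gets $\kappa\ge c/\log(32r_*/r)$, while the error exponent depends only on $r_3/(2r_2)=4$. The power law then falls out of the single inequality $1\le e^{C\mathbb K}\|u\|_{L^\infty(B_r(x_0))}^{\kappa}\mathcal M^{1-\kappa}$. Your version makes the intermediate frequency bound explicit; the paper's hides it inside $\kappa$.
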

	
	Let us explain why the powers $1/2$ and $1/3$ are the natural scales
	in quantitative growth estimates. At a length scale $t^{-1}$, the fourth,
	second, and
	first order terms have respective sizes $t^4$,
	$\|W\|_{L^\infty}t^2$, and $\|V\|_{L^\infty}t$. Balancing the
	lower order terms with the principal part gives
	\begin{equation*}		t^4\sim\|W\|_{L^\infty}t^2
		\quad\Longrightarrow\quad t\sim\|W\|_{L^\infty}^{1/2},
		\qquad
		t^4\sim\|V\|_{L^\infty}t
		\quad\Longrightarrow\quad t\sim\|V\|_{L^\infty}^{1/3}.
	\end{equation*}
	These balances can be seen by simple constant-coefficient examples.
	For $t\geq1$, the real-valued function $u_t(x)=e^{tx_1}$ satisfies
	\begin{equation*}		\Delta^2u_t-t^2\Delta u_t=0,
		\qquad
		\Delta^2u_t-t^3\partial_1u_t=0.
	\end{equation*}
	The only nonzero coefficient has $C^2$ norm $t^2$ in the first
	equation and $t^3$ in the second. On the other hand,
	\begin{equation*}		\log\frac{\|u_t\|_{L^\infty(B_2)}}
		{\|u_t\|_{L^\infty(B_1)}}=t.
	\end{equation*}
	Thus the square-root and cube-root powers recover the logarithmic
	growth and doubling scales of $u_t$ in these two equations.
	
	The paper is organized as follows. Section~\ref{sec:threeball} is devoted
	to the proof of Theorem~\ref{thm:main-threeball}. We establish the
	almost-monotonicity formula, derive the weighted and unweighted three-ball
	inequalities, and obtain the $L^\infty$ estimate. In
	Section~\ref{sec:vanishing}, we prove
	Theorem~\ref{thm:quantitative-vanishing-order-final}. The weighted
	differentiation formula and the interior regularity estimates used in the
	proofs are given in the Appendix. The letters $C,c$
	denote positive constants which may vary from line to line; their
	dependence is specified in the statements. Throughout the paper, $B_r(x_0)$
	denotes the ball centered at $x_0$ with radius $r$, and $B_r=B_r(0)$.
	For the coefficients we use
	$\|f\|_{C^2}=\|f\|_{L^\infty}+\|\nabla f\|_{L^\infty}
	+\|\nabla^2f\|_{L^\infty}$, with the same convention for vector fields.
	
	\section{Three-ball inequality for bi-Laplace equations with lower order terms}
	\label{sec:threeball}
	
	In this section, we prove the quantitative three-ball inequality. We first establish an almost-monotonicity formula for the weighted frequency function. Let $n\geq2$, $R>0$, and let $u$ be a real-valued smooth solution of
	\begin{equation}		\Delta^2u+W(x)\Delta u+V(x)\cdot\nabla u=0
		\qquad\text{in }B_R\subset\R^n .
		\label{eq:main-equation}
	\end{equation}
	Here $W$ and the components of $V=(V^1,\ldots,V^n)$ are real-valued functions satisfying
	\begin{equation*}		W\in C^2(B_R),\qquad V^j\in C^2(B_R),\quad j=1,\ldots,n.
	\end{equation*}
	Put
	\begin{equation*}		w:=\Delta u,
		\qquad
		\omega_r(x):=r^2-|x|^2,
		\qquad
		0<r<R.
	\end{equation*}
	The equation can be written as the second order system $\Delta u=w$ and
	\begin{equation}		\Delta w=-Ww-V\cdot\nabla u .
		\label{eq:Delta-w-equation}
	\end{equation}
	
	We denote the coefficient bounds by
	\begin{align}		W_0&:=\|W\|_{L^\infty(B_R)},
		&W_1&:=\sum_{i=1}^n\|\partial_iW\|_{L^\infty(B_R)},
		&W_2&:=\|\Delta W\|_{L^\infty(B_R)}\notag\\
		V_0&:=\sum_{j=1}^n\|V^j\|_{L^\infty(B_R)},
		&V_1&:=\sum_{i,j=1}^n\|\partial_iV^j\|_{L^\infty(B_R)},
		&V_2&:=\sum_{j=1}^n\|\Delta V^j\|_{L^\infty(B_R)} .\label{eq:V-norms}
	\end{align}
	We assume that these six scalar bounds are finite. Here $V^j$ denotes
	the $j$-th component of $V$, whereas $V_0,V_1,V_2$ denote the scalar
	coefficient bounds in \eqref{eq:V-norms}.
	
	We introduce two parameters
	\begin{equation*}		\alpha\geq2,
		\qquad
		\lambda>0.
	\end{equation*}
	For $0<r<R$, define
	\begin{equation}		\mathbb H(r):=\int_{B_r}\bigl(u^2+\lambda r^2w^2\bigr)\omega_r^{\alpha-1}\dd,
		\label{eq:H-def}
	\end{equation}
	\begin{equation}		\mathbb D(r):=\int_{B_r}\bigl(|\nabla u|^2+\lambda r^2|\nabla w|^2\bigr)\omega_r^\alpha\dd
		+2\lambda\alpha r^2\int_{B_r}w^2\omega_r^{\alpha-1}\dd,
		\label{eq:D-def}
	\end{equation}
	\begin{equation*}		\mathbb L(r):=\int_{B_r}\bigl(u\Delta u+\lambda r^2w\Delta w\bigr)\omega_r^\alpha\dd
		=\int_{B_r}uw\omega_r^\alpha\dd+\lambda r^2\int_{B_r}w\Delta w\,\omega_r^\alpha\dd,
	\end{equation*}
	\begin{equation*}		\mathbb N(r):=\frac{\mathbb D(r)+\mathbb L(r)}{\mathbb H(r)}
	\end{equation*}
	whenever $\mathbb H(r)>0$. The parameters $\alpha$ and $\lambda$ will be chosen below. We first establish the identities and estimates used to study $\mathbb N(r)$.
	
	The weighted differentiation identities used below are stated and proved in Lemma~\ref{lem:weighted-differentiation} in the Appendix. We first estimate the weighted integrals of $\nabla u$ and $D^2u$.
	
	\begin{lemma}
		\label{lem:weighted-derivative-estimates}
		There is a constant $C=C(n)$ such that, for every $0<r<R$,
		\begin{equation}			\int_{B_r}|\nabla u|^2\omega_r^{\alpha+1}\dd
			\leq C\left(\alpha^2r^2+\frac{r^3}{\sqrt\lambda}\right)\mathbb H(r),
			\label{eq:grad-u-control}
		\end{equation}
		and
		\begin{equation}			\int_{B_r}|D^2u|^2\omega_r^{\alpha+2}\dd
			\leq C\alpha^2r^2\mathbb D(r)+C\frac{r^4}{\lambda}\mathbb H(r).
			\label{eq:hessian-u-control}
		\end{equation}
	\end{lemma}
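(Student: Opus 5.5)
Both estimates come from integrating by parts against the weight. Two facts about $\omega_r$ are used throughout: $|\nabla\omega_r|=2|x|\le 2r$ and $\Delta\omega_r=-2n$. We also use that $\omega_r^\beta$ vanishes on $\partial B_r$ for $\beta>0$, so no boundary terms appear. The key relation is $\Delta u=w$, which lets second derivatives of $u$ be traded for $w$; the weight $\lambda r^2$ multiplying $w^2$ in $\mathbb H$ then produces the powers of $\lambda$.

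\emph{Estimate \eqref{eq:grad-u-control}.} Integrate by parts once:
\[
\int|\nabla u|^2\omega_r^{\alpha+1}=-\int u\,w\,\omega_r^{\alpha+1}-(\alpha+1)\int u\,\nabla u\cdot\nabla\omega_r\,\omega_r^{\alpha}.
\]
For the second term, use $|\nabla\omega_r|\le 2r$ and Cauchy--Schwarz with the weights split as $\omega_r^{\alpha}=\omega_r^{(\alpha+1)/2}\omega_r^{(\alpha-1)/2}$. This gives
\[
\tfrac12\int|\nabla u|^2\omega_r^{\alpha+1}+C\alpha^2r^2\int u^2\omega_r^{\alpha-1},
\]
and the first piece is absorbed on the left. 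For the first term, use $\omega_r\le r^2$, so that $\omega_r^{\alpha+1}\le r^4\omega_r^{\alpha-1}$, and then
\[
\int|u||w|\omega_r^{\alpha+1}\le r^4\Bigl(\int u^2\omega_r^{\alpha-1}\Bigr)^{1/2}\Bigl(\int w^2\omega_r^{\alpha-1}\Bigr)^{1/2}.
\]
Since $\int u^2\omega_r^{\alpha-1}\le\mathbb H$ and $\int w^2\omega_r^{\alpha-1}\le\mathbb H/(\lambda r^2)$, the right-hand side is at most $r^3\lambda^{-1/2}\mathbb H$. Combining the two bounds gives \eqref{eq:grad-u-control}.

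\emph{Estimate \eqref{eq:hessian-u-control}.} We use the Bochner-type identity for $\phi=\omega_r^{\alpha+2}$:
\[
\int|D^2u|^2\phi=\int(\Delta u)^2\phi-\int\partial_iu\,\partial_{ij}u\,\partial_j\phi+\int\Delta u\,\partial_iu\,\partial_i\phi .
\]
It follows from two integrations by parts that swap $\partial_i$ and $\partial_j$. We bound the three terms as follows.
\begin{itemize}
\item[(i)] The first term is $\int w^2\omega_r^{\alpha+2}\le r^6\int w^2\omega_r^{\alpha-1}\le r^4\lambda^{-1}\mathbb H$, again using $\omega_r\le r^2$.
\item[(ii)] In the second and third terms, $|\nabla\phi|\le C\alpha r\,\omega_r^{\alpha+1}$. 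Cauchy--Schwarz gives $\epsilon\int(|D^2u|^2+w^2)\omega_r^{\alpha+2}+C_\epsilon\alpha^2r^2\int|\nabla u|^2\omega_r^{\alpha}$.
\item[(iii)] The $D^2u$ part of (ii) is absorbed on the left. The $w^2$ part is bounded as in (i).
\item[(iv)] The gradient part satisfies $\int|\nabla u|^2\omega_r^\alpha\le\mathbb D$ directly from the definition \eqref{eq:D-def}.
\end{itemize}
Together these give \eqref{eq:hessian-u-control}.

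\emph{Main obstacle.} The only delicate point is that the exponents of $\omega_r$ must line up so that every term on the right is one of the quantities actually appearing in $\mathbb H$ or $\mathbb D$. Since $\omega_r\le r^2$, any extra power of $\omega_r$ can be traded for $r^2$. This trade is what produces the $r^3/\sqrt\lambda$ factor in \eqref{eq:grad-u-control} and the $r^4/\lambda$ factor in \eqref{eq:hessian-u-control}. We also use that $\alpha\ge2$, which keeps all exponents of $\omega_r$ positive.
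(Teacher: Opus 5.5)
Your proof is correct and follows essentially the paper's route. You use the same integration by parts against $\omega_r^{\alpha+1}$ for the gradient bound, and for the Hessian an identity that is the paper's Bochner identity one integration by parts earlier, together with the same trade $\omega_r\le r^2$ to convert the $w^2$ terms into $r^4\mathbb H/\lambda$. The only difference is that you close both estimates by Cauchy--Schwarz with absorption into the left-hand side, which is legitimate since $u$ is smooth on $\overline{B_r}$. The paper instead integrates by parts once more, moving the derivative onto the weight through $\operatorname{div}(x\omega_r^\alpha)$ and $\Delta\phi$, so it needs no absorption.
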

	
	\begin{proof}
		Since $\omega_r^{\alpha+1}=0$ on $\partial B_r$,
		\begin{align}			\int_{B_r}|\nabla u|^2\omega_r^{\alpha+1}\dd
			&=-\int_{B_r}u\Delta u\,\omega_r^{\alpha+1}\dd
			-\int_{B_r}u\nabla u\cdot\nabla(\omega_r^{\alpha+1})\dd\notag\\
			&=-\int_{B_r}uw\omega_r^{\alpha+1}\dd
			-\int_{B_r}u\nabla u\cdot\nabla(\omega_r^{\alpha+1})\dd .
			\label{eq:grad-est-start}
		\end{align}
		For the first term, Cauchy's inequality gives
		\begin{align}			\left|\int_{B_r}uw\omega_r^{\alpha+1}\dd\right|
			&\leq
			\left(\int_{B_r}u^2\omega_r^{\alpha-1}\dd\right)^{1/2}
			\left(\int_{B_r}w^2\omega_r^{\alpha+3}\dd\right)^{1/2}\notag\\
			&\leq r^4
			\left(\int_{B_r}u^2\omega_r^{\alpha-1}\dd\right)^{1/2}
			\left(\int_{B_r}w^2\omega_r^{\alpha-1}\dd\right)^{1/2}\notag\\
			&=\frac{r^3}{\sqrt\lambda}
			\left(\int_{B_r}u^2\omega_r^{\alpha-1}\dd\right)^{1/2}
			\left(\lambda r^2\int_{B_r}w^2\omega_r^{\alpha-1}\dd\right)^{1/2}\notag\\
			&\leq \frac{r^3}{2\sqrt\lambda}\mathbb H(r).
			\label{eq:grad-est-first-term}
		\end{align}
		For the second term, since
		\begin{equation*}			\nabla(\omega_r^{\alpha+1})=-2(\alpha+1)x\omega_r^\alpha,
		\end{equation*}
		we have
		\begin{align*}			-\int_{B_r}u\nabla u\cdot\nabla(\omega_r^{\alpha+1})\dd
			&=2(\alpha+1)\int_{B_r}u(x\cdot\nabla u)\omega_r^\alpha\dd\\
			&=(\alpha+1)\int_{B_r}x\cdot\nabla(u^2)\omega_r^\alpha\dd\\
			&=-(\alpha+1)\int_{B_r}u^2\operatorname{div}(x\omega_r^\alpha)\dd.
		\end{align*}
		Now
		\begin{equation*}			\operatorname{div}(x\omega_r^\alpha)=n\omega_r^\alpha-2\alpha |x|^2\omega_r^{\alpha-1}.
		\end{equation*}
		Since $\omega_r\leq r^2$ and $|x|^2\leq r^2$, it follows that
		\begin{equation*}			\left|\operatorname{div}(x\omega_r^\alpha)\right|
			\leq (n+2\alpha)r^2\omega_r^{\alpha-1}
			\leq C(n)\alpha r^2\omega_r^{\alpha-1}.
		\end{equation*}
		Multiplying by $\alpha+1$ and using $\alpha\geq2$, we get
		\begin{equation}			\left|\int_{B_r}u\nabla u\cdot\nabla(\omega_r^{\alpha+1})\dd\right|
			\leq C(n)\alpha^2r^2\mathbb H(r).
			\label{eq:grad-est-second-term}
		\end{equation}
		Combining \eqref{eq:grad-est-start}, \eqref{eq:grad-est-first-term}, and \eqref{eq:grad-est-second-term} gives \eqref{eq:grad-u-control}.
		
		For the Hessian estimate, put $\phi=\omega_r^{\alpha+2}$.  The Bochner identity gives
		\begin{equation*}			\frac{1}{2}\Delta |\nabla u|^2=|D^2u|^2+\nabla u\cdot\nabla(\Delta u)
			=|D^2u|^2+\nabla u\cdot\nabla w.
		\end{equation*}
		Multiplying by $\phi$ and integrating by parts twice, we obtain
		\begin{align}			\int_{B_r}\phi |D^2u|^2\dd
			&=\frac{1}{2}\int_{B_r}\phi\Delta |\nabla u|^2\dd
			-\int_{B_r}\phi\nabla u\cdot\nabla w\dd\notag\\
			&=\frac{1}{2}\int_{B_r}|\nabla u|^2\Delta\phi\dd
			+\int_{B_r}w\operatorname{div}(\phi\nabla u)\dd\notag\\
			&=\frac{1}{2}\int_{B_r}|\nabla u|^2\Delta\phi\dd
			+\int_{B_r}\phi w^2\dd
			+\int_{B_r}w\nabla\phi\cdot\nabla u\dd .
			\label{eq:hessian-bochner-start}
		\end{align}
		Since $\nabla\omega_r=-2x$,
		\begin{equation*}			\nabla\phi=-2(\alpha+2)x\omega_r^{\alpha+1},
			\qquad
			|\nabla\phi|\leq C\alpha r\omega_r^{\alpha+1}.
		\end{equation*}
		Also
		\begin{align*}			\Delta\phi
			&=-2n(\alpha+2)\omega_r^{\alpha+1}
			+4(\alpha+2)(\alpha+1)|x|^2\omega_r^\alpha,
		\end{align*}
		so
		\begin{equation}			|\Delta\phi|\leq C(n)\alpha^2r^2\omega_r^\alpha,
			\qquad
			\frac{|\nabla\phi|^2}{\phi}
			=4(\alpha+2)^2|x|^2\omega_r^\alpha
			\leq C\alpha^2r^2\omega_r^\alpha.
			\label{eq:phi-bounds}
		\end{equation}
		By Cauchy's inequality,
		\begin{align*}			\left|\int_{B_r}w\nabla\phi\cdot\nabla u\dd\right|
			&\leq \frac{1}{2}\int_{B_r}\phi w^2\dd
			+\frac{1}{2}\int_{B_r}|\nabla u|^2\frac{|\nabla\phi|^2}{\phi}\dd\\
			&\leq \frac{1}{2}\int_{B_r}w^2\omega_r^{\alpha+2}\dd
			+C\alpha^2r^2\int_{B_r}|\nabla u|^2\omega_r^\alpha\dd .
		\end{align*}
		Using this and \eqref{eq:phi-bounds} in \eqref{eq:hessian-bochner-start} yields
		\begin{equation*}			\int_{B_r}|D^2u|^2\omega_r^{\alpha+2}\dd
			\leq C\alpha^2r^2\int_{B_r}|\nabla u|^2\omega_r^\alpha\dd
			+C\int_{B_r}w^2\omega_r^{\alpha+2}\dd .
		\end{equation*}
		The first integral is at most $\mathbb D(r)$, and
		\begin{equation*}			\int_{B_r}w^2\omega_r^{\alpha+2}\dd
			\leq r^6\int_{B_r}w^2\omega_r^{\alpha-1}\dd
			\leq \frac{r^4}{\lambda}\mathbb H(r).
		\end{equation*}
		This proves \eqref{eq:hessian-u-control}.
	\end{proof}
	
	\begin{lemma}
		\label{lem:H-D-prime}
		The following identities hold:
		\begin{equation}			\mathbb H'(r)=\frac{2\alpha+n-2}{r}\mathbb H(r)+\frac{1}{\alpha r}\bigl(\mathbb D(r)+\mathbb L(r)\bigr),
			\label{eq:H-prime}
		\end{equation}
		and, if
		\begin{equation}			X:=x\cdot\nabla u-\frac{\omega_r}{4\alpha}w,
			\qquad
			Y:=x\cdot\nabla w+w-\frac{\omega_r}{4\alpha}\Delta w,
			\label{eq:X-Y-def}
		\end{equation}
		then
		\begin{align}			\mathbb D'(r)
			&=\frac{2\alpha+n-2}{r}\mathbb D(r)
			+\frac{4\alpha}{r}\int_{B_r}\bigl(X^2+\lambda r^2Y^2\bigr)\omega_r^{\alpha-1}\dd\notag\\
			&\quad
			-\frac{1}{4\alpha r}\int_{B_r}\bigl(w^2+\lambda r^2(\Delta w)^2\bigr)\omega_r^{\alpha+1}\dd .
			\label{eq:D-prime}
		\end{align}
	\end{lemma}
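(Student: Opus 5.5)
Both identities are pure integration-by-parts computations. Neither uses the equation \eqref{eq:Delta-w-equation}; only $w=\Delta u$ is used, and $\Delta w$ enters $\mathbb D'$ formally. The plan is to differentiate under the integral sign, using $\partial_r\omega_r=2r$. All boundary terms on $\partial B_r$ vanish because every weight is a positive power of $\omega_r$ (here $\alpha\geq2$). The resulting integrals with weight one power lower are then converted back into the original quantities by a divergence identity.

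\emph{Computation of $\mathbb H'$.} Differentiating \eqref{eq:H-def} gives
\begin{equation*}
\mathbb H'(r)=2\lambda r\int_{B_r}w^2\omega_r^{\alpha-1}\dd+2(\alpha-1)r\int_{B_r}f\,\omega_r^{\alpha-2}\dd,\qquad f:=u^2+\lambda r^2w^2 .
\end{equation*}
\begin{itemize}
\item Write $r^2\omega_r^{\alpha-2}=\omega_r^{\alpha-1}+|x|^2\omega_r^{\alpha-2}$.
\item Use $\operatorname{div}(x\omega_r^{\alpha-1})=n\omega_r^{\alpha-1}-2(\alpha-1)|x|^2\omega_r^{\alpha-2}$ to get
\begin{equation*}
2(\alpha-1)\int_{B_r}f|x|^2\omega_r^{\alpha-2}\dd=n\int_{B_r}f\omega_r^{\alpha-1}\dd+\int_{B_r}x\cdot\nabla f\,\omega_r^{\alpha-1}\dd .
\end{equation*}
This yields the term $\frac{2(\alpha-1)+n}{r}\mathbb H$.
\item Identify $\int_{B_r} x\cdot\nabla f\,\omega_r^{\alpha-1}\dd$ through
\begin{equation*}
2\alpha\int_{B_r}u\,(x\cdot\nabla u)\,\omega_r^{\alpha-1}\dd=-\int_{B_r}u\nabla u\cdot\nabla\omega_r^\alpha\dd=\int_{B_r}\bigl(|\nabla u|^2+u\Delta u\bigr)\omega_r^\alpha\dd,
\end{equation*}
together with the same identity for $w$ multiplied by $\lambda r^2$. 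This produces $\frac{1}{\alpha r}(\mathbb D+\mathbb L)$, except for the extra piece $2\lambda\alpha r^2\int_{B_r} w^2\omega_r^{\alpha-1}\dd$ built into $\mathbb D$.
\item Check that this extra piece, divided by $\alpha r$, equals exactly the stray term $2\lambda r\int_{B_r} w^2\omega_r^{\alpha-1}\dd$. This explains the choice of the extra term in \eqref{eq:D-def}.
\end{itemize}

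\emph{Computation of $\mathbb D'$.} Differentiating \eqref{eq:D-def} produces weights $\omega_r^{\alpha-1}$ on $|\nabla u|^2$ and $|\nabla w|^2$, weight $\omega_r^{\alpha-2}$ on $w^2$, and the explicit $r$-derivatives of the factors $r^2$. I would handle the gradient terms by a weighted Rellich--Pohozaev identity, proved by expanding
\begin{equation*}
\int_{B_r}\operatorname{div}\Bigl(\omega_r^{\alpha}\bigl(x|\nabla u|^2-2(x\cdot\nabla u)\nabla u\bigr)\Bigr)\dd=0 .
\end{equation*}
This expresses $2\alpha\int_{B_r}|x|^2|\nabla u|^2\omega_r^{\alpha-1}\dd$ through four quantities:
\begin{itemize}
\item $(n-2)\int_{B_r}|\nabla u|^2\omega_r^\alpha\dd$;
\item $4\alpha\int_{B_r}(x\cdot\nabla u)^2\omega_r^{\alpha-1}\dd$;
\item the cross term $2\int_{B_r}\Delta u\,(x\cdot\nabla u)\,\omega_r^\alpha\dd$, which equals $2\int_{B_r} w\,(x\cdot\nabla u)\,\omega_r^\alpha\dd$;
\item and, after writing $r^2\omega_r^{\alpha-1}=\omega_r^\alpha+|x|^2\omega_r^{\alpha-1}$, the term $\frac{2\alpha+n-2}{r}\int_{B_r}|\nabla u|^2\omega_r^\alpha\dd$.
\end{itemize}
The quantity $(x\cdot\nabla u)^2$ together with the cross term is then completed into a square. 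Indeed,
\begin{equation*}
4\alpha X^2\omega_r^{\alpha-1}=4\alpha(x\cdot\nabla u)^2\omega_r^{\alpha-1}-2(x\cdot\nabla u)\,w\,\omega_r^{\alpha}+\frac{1}{4\alpha}w^2\omega_r^{\alpha+1},
\end{equation*}
which accounts for the negative term $-\frac{1}{4\alpha r}\int_{B_r}w^2\omega_r^{\alpha+1}\dd$.

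The same computation is done for $w$, with $\Delta w$ in place of $w$, multiplied by $\lambda r^2$. The additional pieces then have to be absorbed into $Y$:
\begin{itemize}
\item the derivative of the factor $r^2$;
\item the derivative of $2\lambda\alpha r^2\int_{B_r}w^2\omega_r^{\alpha-1}\dd$, including its $\omega_r^{\alpha-2}$ part, which I would convert using the divergence identity above with $f=w^2$.
\end{itemize}
These should combine into the $+w$ inside $Y$, via the expansion of $(x\cdot\nabla w+w)^2$ and the cross terms $w\,(x\cdot\nabla w)$ and $w\Delta w$.

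The main obstacle is this last bookkeeping step: verifying that the leftover $w$-terms from differentiating the $\lambda r^2$ factors and the extra $2\lambda\alpha r^2\int_{B_r} w^2\omega_r^{\alpha-1}\dd$ term match exactly the cross terms of $\lambda r^2Y^2$. I would first expand $4\alpha\lambda r^2\int_{B_r} Y^2\omega_r^{\alpha-1}\dd$ fully. I would then reduce every term to the basis $\int_{B_r}|\nabla w|^2\omega_r^\alpha\dd$, $\int_{B_r}w^2\omega_r^{\alpha-1}\dd$, $\int_{B_r}(x\cdot\nabla w)^2\omega_r^{\alpha-1}\dd$, $\int_{B_r}\Delta w\,(x\cdot\nabla w)\,\omega_r^\alpha\dd$, and compare coefficients with the derivative side. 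In that reduction, $\int_{B_r}w\,(x\cdot\nabla w)\,\omega_r^{\alpha-1}\dd$ and $\int_{B_r}w\Delta w\,\omega_r^\alpha\dd$ are eliminated by the integration-by-parts identities already used for $\mathbb H'$.
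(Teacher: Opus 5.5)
Your proposal is correct and is essentially the paper's argument. The paper cites the weighted differentiation lemma, which your direct differentiation re-derives (using $\partial_r\omega_r=2r$, the split $r^2\omega_r^{\gamma-1}=\omega_r^\gamma+|x|^2\omega_r^{\gamma-1}$, and the divergence identity); it then performs the integration by parts your Rellich--Pohozaev field encodes and completes the same squares.

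The bookkeeping you flag as the main obstacle does close. Your leftover $w$-terms are $2\lambda r\int_{B_r}|\nabla w|^2\omega_r^\alpha\,dx+4\lambda\alpha r\int_{B_r}w^2\omega_r^{\alpha-1}\,dx+4\lambda\alpha r\int_{B_r}w(x\cdot\nabla w)\omega_r^{\alpha-1}\,dx$. The single identity $2\alpha\int_{B_r}w(x\cdot\nabla w)\omega_r^{\alpha-1}\,dx=\int_{B_r}(|\nabla w|^2+w\Delta w)\omega_r^\alpha\,dx$ turns them into exactly $4\lambda\alpha r\int_{B_r}(2wA_w+w^2)\omega_r^{\alpha-1}\,dx$ with $A_w=x\cdot\nabla w-\frac{\omega_r}{4\alpha}\Delta w$, which is how the paper finishes. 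Note that this identity eliminates only one of $\int_{B_r} w(x\cdot\nabla w)\omega_r^{\alpha-1}\,dx$ and $\int_{B_r} w\Delta w\,\omega_r^\alpha\,dx$, not both; the other then cancels on its own.
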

	
	\begin{proof}
		Write
		\begin{equation*}			\mathbb H=\mathbb H_u+\lambda r^2\mathbb H_w,
			\qquad
			\mathbb H_u:=\int_{B_r}u^2\omega_r^{\alpha-1}\dd,
			\qquad
			\mathbb H_w:=\int_{B_r}w^2\omega_r^{\alpha-1}\dd .
		\end{equation*}
		By Lemma~\ref{lem:weighted-differentiation} with $\gamma=\alpha-1$ and $f=u^2$,
		\begin{align}			\mathbb H_u'
			&=\frac{2\alpha+n-2}{r}\mathbb H_u
			+\frac{1}{2\alpha r}\int_{B_r}\Delta(u^2)\omega_r^\alpha\dd\notag\\
			&=\frac{2\alpha+n-2}{r}\mathbb H_u
			+\frac{1}{\alpha r}\left(\int_{B_r}|\nabla u|^2\omega_r^\alpha\dd+
			\int_{B_r}uw\omega_r^\alpha\dd\right).
			\label{eq:Hu-prime}
		\end{align}
		Likewise,
		\begin{equation}			\mathbb H_w'=\frac{2\alpha+n-2}{r}\mathbb H_w
			+\frac{1}{\alpha r}\left(\int_{B_r}|\nabla w|^2\omega_r^\alpha\dd+
			\int_{B_r}w\Delta w\,\omega_r^\alpha\dd\right).
			\label{eq:Hw-prime}
		\end{equation}
		Therefore
		\begin{align}			\frac{d}{dr}(\lambda r^2\mathbb H_w)
			&=2\lambda r\mathbb H_w+\lambda r^2\mathbb H_w'\notag\\
			&=\frac{2\alpha+n-2}{r}\lambda r^2\mathbb H_w
			+\frac{1}{\alpha r}\Bigl(\lambda r^2\int_{B_r}|\nabla w|^2\omega_r^\alpha\dd
			+2\lambda\alpha r^2\int_{B_r}w^2\omega_r^{\alpha-1}\dd\notag\\
			&\hspace{42mm}
			+\lambda r^2\int_{B_r}w\Delta w\,\omega_r^\alpha\dd\Bigr).
			\label{eq:lambda-Hw-prime}
		\end{align}
		Adding \eqref{eq:Hu-prime} and \eqref{eq:lambda-Hw-prime} gives \eqref{eq:H-prime}.
		
		We now compute $\mathbb D'$.  Put
		\begin{equation*}			\mathbb D_u:=\int_{B_r}|\nabla u|^2\omega_r^\alpha\dd,
			\qquad
			\widetilde{\mathbb D}_w:=\int_{B_r}|\nabla w|^2\omega_r^\alpha\dd .
		\end{equation*}
		By Lemma~\ref{lem:weighted-differentiation},
		\begin{align}			\mathbb D_u'
			&=\frac{2\alpha+n}{r}\mathbb D_u
			+\frac{1}{r}\int_{B_r}x_j\partial_j(|\nabla u|^2)\omega_r^\alpha\dd\notag\\
			&=\frac{2\alpha+n}{r}\mathbb D_u
			+\frac{2}{r}\int_{B_r}x_j\partial_{ij}u\,\partial_i u\,\omega_r^\alpha\dd,
			\label{eq:Du-start-index}
		\end{align}
		where repeated indices are summed from $1$ to $n$.  Integrating by parts in $x_i$ gives
		\begin{align*}			\int_{B_r}x_j\partial_{ij}u\,\partial_i u\,\omega_r^\alpha\dd
			&=-\int_{B_r}\partial_j u\,\partial_i(\partial_i u\,x_j\omega_r^\alpha)\dd\\
			&=-\int_{B_r}\partial_j u\,\partial_{ii}u\,x_j\omega_r^\alpha\dd
			-\int_{B_r}\partial_j u\,\partial_i u\,\delta_{ij}\omega_r^\alpha\dd\\
			&\quad
			-\int_{B_r}\partial_j u\,\partial_i u\,x_j\partial_i(\omega_r^\alpha)\dd.
		\end{align*}
		Since $\partial_i(\omega_r^\alpha)=-2\alpha x_i\omega_r^{\alpha-1}$, this becomes
		\begin{align}			\int_{B_r}x_j\partial_{ij}u\,\partial_i u\,\omega_r^\alpha\dd
			&=-\int_{B_r}(x\cdot\nabla u)\Delta u\,\omega_r^\alpha\dd
			-\int_{B_r}|\nabla u|^2\omega_r^\alpha\dd\notag\\
			&\quad+2\alpha\int_{B_r}(x\cdot\nabla u)^2\omega_r^{\alpha-1}\dd .
			\label{eq:Du-ibp-index}
		\end{align}
		Using $\Delta u=w$ in \eqref{eq:Du-start-index} and then substituting \eqref{eq:Du-ibp-index}, we get
		\begin{align*}			\mathbb D_u'
			&=\frac{2\alpha+n-2}{r}\mathbb D_u
			+\frac{4\alpha}{r}\int_{B_r}(x\cdot\nabla u)^2\omega_r^{\alpha-1}\dd
			-\frac{2}{r}\int_{B_r}(x\cdot\nabla u)w\omega_r^\alpha\dd.
		\end{align*}
		Now
		\begin{align*}			4\alpha\left(x\cdot\nabla u-\frac{\omega_r}{4\alpha}w\right)^2\omega_r^{\alpha-1}
			&=4\alpha(x\cdot\nabla u)^2\omega_r^{\alpha-1}
			-2(x\cdot\nabla u)w\omega_r^\alpha
			+\frac{1}{4\alpha}w^2\omega_r^{\alpha+1}.
		\end{align*}
		Thus
		\begin{align}			\mathbb D_u'
			&=\frac{2\alpha+n-2}{r}\mathbb D_u
			+\frac{4\alpha}{r}\int_{B_r}\left(x\cdot\nabla u-\frac{\omega_r}{4\alpha}w\right)^2\omega_r^{\alpha-1}\dd
			-\frac{1}{4\alpha r}\int_{B_r}w^2\omega_r^{\alpha+1}\dd.
			\label{eq:Du-prime-final}
		\end{align}
		The same calculation with $w$ in place of $u$ gives
		\begin{align*}			\widetilde{\mathbb D}_w'
			&=\frac{2\alpha+n-2}{r}\widetilde{\mathbb D}_w
			+\frac{4\alpha}{r}\int_{B_r}\left(x\cdot\nabla w-\frac{\omega_r}{4\alpha}\Delta w\right)^2\omega_r^{\alpha-1}\dd\\
			&\quad
			-\frac{1}{4\alpha r}\int_{B_r}(\Delta w)^2\omega_r^{\alpha+1}\dd .
		\end{align*}
		Using \eqref{eq:Hw-prime}, we differentiate the complete $w$-part of $\mathbb D$:
		\begin{align}			&\frac{d}{dr}\left(\lambda r^2\widetilde{\mathbb D}_w+2\lambda\alpha r^2\mathbb H_w\right)\notag\\
			&=2\lambda r\widetilde{\mathbb D}_w+\lambda r^2\widetilde{\mathbb D}_w'
			+4\lambda\alpha r\mathbb H_w+2\lambda\alpha r^2\mathbb H_w'\notag\\
			&=\frac{2\alpha+n-2}{r}\left(\lambda r^2\widetilde{\mathbb D}_w+2\lambda\alpha r^2\mathbb H_w\right)
			+4\lambda\alpha r\int_{B_r}A_w^2\omega_r^{\alpha-1}\dd
			-\frac{\lambda r}{4\alpha}\int_{B_r}(\Delta w)^2\omega_r^{\alpha+1}\dd\notag\\
			&\quad
			+4\lambda r\widetilde{\mathbb D}_w
			+2\lambda r\int_{B_r}w\Delta w\,\omega_r^\alpha\dd
			+4\lambda\alpha r\mathbb H_w,
			\label{eq:w-part-before-square}
		\end{align}
		where
		\begin{equation*}			A_w:=x\cdot\nabla w-\frac{\omega_r}{4\alpha}\Delta w .
		\end{equation*}
		The remaining three terms in \eqref{eq:w-part-before-square} are exactly the cross terms needed to replace $A_w^2$ by $(A_w+w)^2$.  Indeed,
		\begin{align*}			2\alpha\int_{B_r}w(x\cdot\nabla w)\omega_r^{\alpha-1}\dd
			&=-\int_{B_r}w\nabla w\cdot\nabla(\omega_r^\alpha)\dd\\
			&=\int_{B_r}|\nabla w|^2\omega_r^\alpha\dd+\int_{B_r}w\Delta w\,\omega_r^\alpha\dd.
		\end{align*}
		Consequently,
		\begin{align*}			4\alpha\int_{B_r}wA_w\omega_r^{\alpha-1}\dd
			&=4\alpha\int_{B_r}w(x\cdot\nabla w)\omega_r^{\alpha-1}\dd
			-\int_{B_r}w\Delta w\,\omega_r^\alpha\dd\\
			&=2\widetilde{\mathbb D}_w+\int_{B_r}w\Delta w\,\omega_r^\alpha\dd .
		\end{align*}
		Since
		\begin{equation*}			(A_w+w)^2=A_w^2+2wA_w+w^2,
		\end{equation*}
		we have
		\begin{align*}			4\lambda\alpha r\int_{B_r}\bigl((A_w+w)^2-A_w^2\bigr)\omega_r^{\alpha-1}\dd
			&=4\lambda r\widetilde{\mathbb D}_w
			+2\lambda r\int_{B_r}w\Delta w\,\omega_r^\alpha\dd
			+4\lambda\alpha r\mathbb H_w.
		\end{align*}
		Substituting this identity into \eqref{eq:w-part-before-square} gives
		\begin{align}			\frac{d}{dr}\left(\lambda r^2\widetilde{\mathbb D}_w+2\lambda\alpha r^2\mathbb H_w\right)
			&=\frac{2\alpha+n-2}{r}\left(\lambda r^2\widetilde{\mathbb D}_w+2\lambda\alpha r^2\mathbb H_w\right)\notag\\
			&\quad+4\lambda\alpha r\int_{B_r}\left(x\cdot\nabla w+w-\frac{\omega_r}{4\alpha}\Delta w\right)^2\omega_r^{\alpha-1}\dd\notag\\
			&\quad-\frac{\lambda r}{4\alpha}\int_{B_r}(\Delta w)^2\omega_r^{\alpha+1}\dd .
			\label{eq:w-part-final}
		\end{align}
		Adding \eqref{eq:Du-prime-final} and \eqref{eq:w-part-final} proves \eqref{eq:D-prime}.
	\end{proof}
	
	To state the almost-monotonicity formula, let $C_0\geq1$ be a sufficiently
	large dimensional constant, enlarged when necessary in the proofs below. To express the calculation easier, set
	\begin{equation}		B(r):=C_0\left[\frac{r}{\sqrt\lambda}+W_0r^2
		+V_0\left(\alpha\sqrt\lambda\,r^2+\lambda^{1/4}r^{5/2}\right)\right].
		\label{eq:Bstar-def}
	\end{equation}
	Define
	\begin{align}		\cG(r)&:=
		\frac{r}{\sqrt\lambda}
		+\frac{1}{\alpha\sqrt\lambda}
		+\frac{W_0r+W_1r^2}{\alpha}
		+\sqrt\lambda\,V_0r
		+\sqrt\lambda\,V_1r^2
		+\frac{\sqrt\lambda\,V_2r^3}{\alpha}
		+\frac{\lambda V_0^2r^3}{\alpha}
		+\frac{W_0V_0\sqrt\lambda\,r^3}{\alpha},
		\label{eq:G-def}\\
		\cE(r)&:=
		\frac{r}{\alpha\lambda}
		+\frac{W_0r^2}{\alpha\sqrt\lambda}
		+\frac{(W_0^2+W_2)r^3}{\alpha}
		+\frac{W_1r^2+W_1^2r^5}{\alpha}
		+V_0r^2+\frac{V_1r^3}{\alpha}
		+\frac{V_0r^3}{\alpha^2\sqrt\lambda}
		+\cG(r).
		\label{eq:E-def}
	\end{align}
	Finally put
	\begin{equation}		A(r):=C_0\left(\cG(r)+\frac{B(r)}{\alpha r}\right),
		\label{eq:Astar-def}
	\end{equation}
	\begin{equation}		P(r):=C_0\left[\cE(r)+\left(\cG(r)+\frac{2}{r}\right)B(r)+\frac{B(r)^2}{\alpha r}\right].
		\label{eq:Pstar-def}
	\end{equation}
	These quantities record the dependence of the frequency estimates on the coefficients and on the parameters $\alpha$ and $\lambda$.
	
	\begin{lemma}
		\label{lem:elementary-products}
		For every $0<r<R$, the following estimates hold, with $C=C(n)$:
		\begin{align}			\int_{B_r}|\nabla u|\,|\nabla w|\omega_r^{\alpha+1}\dd
			&\leq \frac{r}{\sqrt\lambda}\mathbb D(r),
			\label{eq:prod-a}\\
			\int_{B_r}|w|\,|\nabla w|\omega_r^{\alpha+1}\dd
			&\leq \frac{r}{\lambda}\sqrt{\mathbb H(r)\mathbb D(r)},
			\label{eq:prod-b}\\
			\int_{B_r}|w|\,|\nabla u|\omega_r^{\alpha+1}\dd
			&\leq \frac{r^2}{\sqrt\lambda}\sqrt{\mathbb H(r)\mathbb D(r)},
			\label{eq:prod-c}\\
			\int_{B_r}|w|\,|D^2u|\omega_r^{\alpha+1}\dd
			&\leq C\frac{\alpha r}{\sqrt\lambda}\sqrt{\mathbb H(r)\mathbb D(r)}+C\frac{r^2}{\lambda}\mathbb H(r),
			\label{eq:prod-d}\\
			\int_{B_r}|\nabla w|\,|D^2u|\omega_r^{\alpha+1}\dd
			&\leq C\frac{\alpha}{\sqrt\lambda}\mathbb D(r)+C\frac{r}{\lambda}\sqrt{\mathbb H(r)\mathbb D(r)}.
			\label{eq:prod-e}
		\end{align}
		Moreover,
		\begin{equation}			\int_{B_r}|u|\,|w|\omega_r^{\alpha+1}\dd
			\leq \frac{r^3}{\sqrt\lambda}\mathbb H(r),
			\label{eq:prod-f}
		\end{equation}
		\begin{equation}			\left|\int_{B_r}u(V\cdot\nabla u)\omega_r^{\alpha+1}\dd\right|
			\leq C\left(\alpha V_0r^3+V_1r^4\right)\mathbb H(r),
			\label{eq:prod-g}
		\end{equation}
		and
		\begin{equation}			\lambda r\left|\int_{B_r}w(V\cdot\nabla w)\omega_r^{\alpha+1}\dd\right|
			\leq C\left(\alpha V_0r^2+V_1r^3\right)\mathbb H(r).
			\label{eq:prod-h}
		\end{equation}
	\end{lemma}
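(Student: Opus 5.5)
The plan is to prove each bound by splitting the weight $\omega_r^{\alpha+1}$ into factors matching the weights in $\mathbb H(r)$ and $\mathbb D(r)$. Any leftover power of $\omega_r$ will be bounded by the corresponding power of $r^2$, since $0\le\omega_r\le r^2$ on $B_r$. Cauchy--Schwarz then finishes each estimate. Three consequences of the definitions \eqref{eq:H-def}--\eqref{eq:D-def} will be used throughout:
\[
\int_{B_r}w^2\omega_r^{\alpha-1}\dd\le\frac{\mathbb H(r)}{\lambda r^2},\qquad
\int_{B_r}|\nabla w|^2\omega_r^{\alpha}\dd\le\frac{\mathbb D(r)}{\lambda r^2},\qquad
\int_{B_r}|\nabla u|^2\omega_r^{\alpha}\dd\le\mathbb D(r).
\]

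For \eqref{eq:prod-a}, write $\omega_r^{\alpha+1}\le r^2\omega_r^{\alpha}$. Cauchy--Schwarz gives $r^2\sqrt{\mathbb D}\cdot\sqrt{\mathbb D/(\lambda r^2)}=r\mathbb D/\sqrt\lambda$.

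For \eqref{eq:prod-b} and \eqref{eq:prod-c}, use the factorisation $\omega_r^{\alpha+1}=\omega_r^{(\alpha-1)/2}\omega_r^{\alpha/2}\omega_r^{3/2}$ together with $\omega_r^{3/2}\le r^3$. This yields
\[
r^3\sqrt{\mathbb H/(\lambda r^2)}\sqrt{\mathbb D/(\lambda r^2)}=\frac{r}{\lambda}\sqrt{\mathbb H\mathbb D}
\quad\text{and}\quad
r^3\sqrt{\mathbb H/(\lambda r^2)}\sqrt{\mathbb D}=\frac{r^2}{\sqrt\lambda}\sqrt{\mathbb H\mathbb D}.
\]

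The Hessian terms \eqref{eq:prod-d} and \eqref{eq:prod-e} are the only places where Lemma~\ref{lem:weighted-derivative-estimates} is needed. The Hessian must carry the weight $\omega_r^{(\alpha+2)/2}$, so that \eqref{eq:hessian-u-control} gives
\[
\Bigl(\int_{B_r}|D^2u|^2\omega_r^{\alpha+2}\dd\Bigr)^{1/2}\le C\bigl(\alpha r\sqrt{\mathbb D}+r^2\sqrt{\mathbb H}/\sqrt\lambda\bigr).
\]
For \eqref{eq:prod-d}, split $\omega_r^{\alpha+1}=\omega_r^{(\alpha-1)/2}\omega_r^{(\alpha+2)/2}\omega_r^{1/2}$ with $\omega_r^{1/2}\le r$. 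The $w$-factor then contributes $r\sqrt{\mathbb H/(\lambda r^2)}=\sqrt{\mathbb H/\lambda}$. Multiplying out gives exactly $C\alpha r\sqrt{\mathbb H\mathbb D}/\sqrt\lambda+Cr^2\mathbb H/\lambda$. For \eqref{eq:prod-e}, split $\omega_r^{\alpha+1}=\omega_r^{\alpha/2}\omega_r^{(\alpha+2)/2}$ exactly. The factor $\sqrt{\mathbb D/(\lambda r^2)}$ times the Hessian bound gives $C\alpha\mathbb D/\sqrt\lambda+Cr\sqrt{\mathbb H\mathbb D}/\lambda$. Estimate \eqref{eq:prod-f} is the computation \eqref{eq:grad-est-first-term} with absolute values inside the integral.

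For \eqref{eq:prod-g} and \eqref{eq:prod-h}, Cauchy--Schwarz is not the right tool; instead we integrate by parts. Write $u(V\cdot\nabla u)=\tfrac12V\cdot\nabla(u^2)$. Since $\omega_r^{\alpha+1}$ vanishes on $\partial B_r$,
\[
\int_{B_r}u(V\cdot\nabla u)\omega_r^{\alpha+1}\dd=-\frac12\int_{B_r}u^2\bigl(\operatorname{div}V\,\omega_r^{\alpha+1}-2(\alpha+1)(V\cdot x)\omega_r^{\alpha}\bigr)\dd .
\]
We have $|\operatorname{div}V|\le V_1$, $\omega_r^{\alpha+1}\le r^4\omega_r^{\alpha-1}$ and $|V\cdot x|\,\omega_r^\alpha\le V_0 r^3\omega_r^{\alpha-1}$. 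Hence the right-hand side is bounded by $C(V_1r^4+\alpha V_0r^3)\int_{B_r} u^2\omega_r^{\alpha-1}\dd$, which gives \eqref{eq:prod-g}. The identical computation with $w$ in place of $u$ gives $C(V_1r^4+\alpha V_0r^3)\int_{B_r} w^2\omega_r^{\alpha-1}\dd$. Multiplying by $\lambda r$ turns this into $(V_1r^3+\alpha V_0r^2)\lambda r^2\int_{B_r} w^2\omega_r^{\alpha-1}\dd\le(V_1r^3+\alpha V_0r^2)\mathbb H$, which is \eqref{eq:prod-h}.

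There is no genuine obstacle. The only points needing care are choosing the exponent splits so that the powers of $r$ and $\lambda$ come out exactly as stated, and using the integration by parts rather than Cauchy--Schwarz in \eqref{eq:prod-g}--\eqref{eq:prod-h}, so that no $\mathbb D$ term appears.
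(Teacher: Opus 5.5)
Your proposal is correct and follows the paper's proof essentially step for step. You use the same weight splittings with Cauchy--Schwarz and $0\le\omega_r\le r^2$ for \eqref{eq:prod-a}--\eqref{eq:prod-f}, the Hessian bound \eqref{eq:hessian-u-control} for \eqref{eq:prod-d}--\eqref{eq:prod-e}, and integration by parts of $\tfrac12V\cdot\nabla(u^2)$ and $\tfrac12V\cdot\nabla(w^2)$ against $\omega_r^{\alpha+1}$ for \eqref{eq:prod-g}--\eqref{eq:prod-h}, and all of your exponent and constant bookkeeping checks out.
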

	
	\begin{proof}
		Since $0\leq\omega_r\leq r^2$ in $B_r$, we have
		\begin{equation*}			\omega_r^{\alpha+1}\leq r^2\omega_r^\alpha,
			\qquad
			\omega_r^{\alpha+1}\leq r^4\omega_r^{\alpha-1},
			\qquad
			\omega_r^{\alpha+3}\leq r^6\omega_r^\alpha,
			\qquad
			\omega_r^{\alpha+3}\leq r^8\omega_r^{\alpha-1}.
		\end{equation*}
		From \eqref{eq:H-def} and \eqref{eq:D-def},
		\begin{equation*}			\int_{B_r}u^2\omega_r^{\alpha-1}\dd\leq \mathbb H(r),
			\qquad
			\int_{B_r}w^2\omega_r^{\alpha-1}\dd\leq \frac{\mathbb H(r)}{\lambda r^2},
		\end{equation*}
		and
		\begin{equation*}			\int_{B_r}|\nabla u|^2\omega_r^\alpha\dd\leq \mathbb D(r),
			\qquad
			\int_{B_r}|\nabla w|^2\omega_r^\alpha\dd\leq \frac{\mathbb D(r)}{\lambda r^2}.
		\end{equation*}
		Applying Cauchy's inequality gives
		\begin{align*}			\int_{B_r}|\nabla u|\,|\nabla w|\omega_r^{\alpha+1}\dd
			&\leq r^2\int_{B_r}|\nabla u|\,|\nabla w|\omega_r^\alpha\dd\\
			&\leq r^2
			\left(\int_{B_r}|\nabla u|^2\omega_r^\alpha\dd\right)^{1/2}
			\left(\int_{B_r}|\nabla w|^2\omega_r^\alpha\dd\right)^{1/2}\\
			&\leq r^2\mathbb D(r)^{1/2}\left(\frac{\mathbb D(r)}{\lambda r^2}\right)^{1/2}
			=\frac{r}{\sqrt\lambda}\mathbb D(r).
		\end{align*}
		For \eqref{eq:prod-b}, split the weight as
		\begin{equation*}			\omega_r^{\alpha+1}
			=\omega_r^{(\alpha-1)/2}\omega_r^{(\alpha+3)/2}.
		\end{equation*}
		Then
		\begin{align*}			\int_{B_r}|w|\,|\nabla w|\omega_r^{\alpha+1}\dd
			&\leq
			\left(\int_{B_r}w^2\omega_r^{\alpha-1}\dd\right)^{1/2}
			\left(\int_{B_r}|\nabla w|^2\omega_r^{\alpha+3}\dd\right)^{1/2}\\
			&\leq
			\left(\frac{\mathbb H(r)}{\lambda r^2}\right)^{1/2}
			\left(r^6\int_{B_r}|\nabla w|^2\omega_r^\alpha\dd\right)^{1/2}\\
			&\leq
			\left(\frac{\mathbb H(r)}{\lambda r^2}\right)^{1/2}
			\left(r^6\frac{\mathbb D(r)}{\lambda r^2}\right)^{1/2}
			=\frac{r}{\lambda}\sqrt{\mathbb H(r)\mathbb D(r)}.
		\end{align*}
		Similarly,
		\begin{align*}			\int_{B_r}|w|\,|\nabla u|\omega_r^{\alpha+1}\dd
			&\leq
			\left(\int_{B_r}w^2\omega_r^{\alpha-1}\dd\right)^{1/2}
			\left(\int_{B_r}|\nabla u|^2\omega_r^{\alpha+3}\dd\right)^{1/2}\\
			&\leq
			\left(\frac{\mathbb H(r)}{\lambda r^2}\right)^{1/2}
			\left(r^6\int_{B_r}|\nabla u|^2\omega_r^\alpha\dd\right)^{1/2}\\
			&\leq \frac{r^2}{\sqrt\lambda}\sqrt{\mathbb H(r)\mathbb D(r)}.
		\end{align*}
		To prove \eqref{eq:prod-d}, we apply Cauchy's inequality and the Hessian estimate \eqref{eq:hessian-u-control}:
		\begin{align}			\int_{B_r}|w|\,|D^2u|\omega_r^{\alpha+1}\dd
			&\leq
			\left(\int_{B_r}w^2\omega_r^\alpha\dd\right)^{1/2}
			\left(\int_{B_r}|D^2u|^2\omega_r^{\alpha+2}\dd\right)^{1/2}.
			\label{eq:prod-d-start-proof}
		\end{align}
		Since
		\begin{equation*}			\int_{B_r}w^2\omega_r^\alpha\dd
			\leq r^2\int_{B_r}w^2\omega_r^{\alpha-1}\dd
			\leq \frac{\mathbb H(r)}{\lambda},
		\end{equation*}
		\eqref{eq:prod-d-start-proof} gives
		\begin{align*}			\int_{B_r}|w|\,|D^2u|\omega_r^{\alpha+1}\dd
			&\leq \left(\frac{\mathbb H(r)}{\lambda}\right)^{1/2}
			\left(C\alpha^2r^2\mathbb D(r)+C\frac{r^4}{\lambda}\mathbb H(r)\right)^{1/2}\\
			&\leq C\frac{\alpha r}{\sqrt\lambda}\sqrt{\mathbb H(r)\mathbb D(r)}+C\frac{r^2}{\lambda}\mathbb H(r).
		\end{align*}
		For \eqref{eq:prod-e}, use the same Hessian estimate with $|\nabla w|$:
		\begin{align*}			\int_{B_r}|\nabla w|\,|D^2u|\omega_r^{\alpha+1}\dd
			&\leq
			\left(\int_{B_r}|\nabla w|^2\omega_r^\alpha\dd\right)^{1/2}
			\left(\int_{B_r}|D^2u|^2\omega_r^{\alpha+2}\dd\right)^{1/2}\\
			&\leq
			\left(\frac{\mathbb D(r)}{\lambda r^2}\right)^{1/2}
			\left(C\alpha^2r^2\mathbb D(r)+C\frac{r^4}{\lambda}\mathbb H(r)\right)^{1/2}\\
			&\leq C\frac{\alpha}{\sqrt\lambda}\mathbb D(r)+C\frac{r}{\lambda}\sqrt{\mathbb H(r)\mathbb D(r)}.
		\end{align*}
		For \eqref{eq:prod-f}, Cauchy's inequality gives
		\begin{align*}			\int_{B_r}|u|\,|w|\omega_r^{\alpha+1}\dd
			&\leq
			\left(\int_{B_r}u^2\omega_r^{\alpha-1}\dd\right)^{1/2}
			\left(\int_{B_r}w^2\omega_r^{\alpha+3}\dd\right)^{1/2}\\
			&\leq \mathbb H(r)^{1/2}
			\left(r^8\int_{B_r}w^2\omega_r^{\alpha-1}\dd\right)^{1/2}\\
			&\leq \mathbb H(r)^{1/2}\cdot r^4\left(\frac{\mathbb H(r)}{\lambda r^2}\right)^{1/2}
			=\frac{r^3}{\sqrt\lambda}\mathbb H(r).
		\end{align*}
		For \eqref{eq:prod-g}, integration by parts gives
		\begin{align}			\int_{B_r}u(V\cdot\nabla u)\omega_r^{\alpha+1}\dd
			&=\frac{1}{2}\int_{B_r}V\cdot\nabla(u^2)\omega_r^{\alpha+1}\dd\notag\\
			&=-\frac{1}{2}\int_{B_r}(\operatorname{div}V)u^2\omega_r^{\alpha+1}\dd
			-\frac{1}{2}\int_{B_r}u^2V\cdot\nabla(\omega_r^{\alpha+1})\dd.
			\label{eq:prod-g-ibp-proof}
		\end{align}
		The first term in \eqref{eq:prod-g-ibp-proof} is bounded by
		\begin{equation*}			C V_1\int_{B_r}u^2\omega_r^{\alpha+1}\dd
			\leq C V_1r^4\int_{B_r}u^2\omega_r^{\alpha-1}\dd
			\leq C V_1r^4\mathbb H(r).
		\end{equation*}
		The second term uses
		\begin{equation*}			|\nabla(\omega_r^{\alpha+1})|
			=2(\alpha+1)|x|\omega_r^\alpha
			\leq C\alpha r\omega_r^\alpha
			\leq C\alpha r^3\omega_r^{\alpha-1}.
		\end{equation*}
		Hence
		\begin{equation*}			\int_{B_r}u^2|V|\,|\nabla(\omega_r^{\alpha+1})|\dd
			\leq C\alpha V_0r^3\mathbb H(r).
		\end{equation*}
		Together these give \eqref{eq:prod-g}.  Finally, for \eqref{eq:prod-h}, integrate by parts in the same way:
		\begin{align*}			\int_{B_r}w(V\cdot\nabla w)\omega_r^{\alpha+1}\dd
			&=\frac{1}{2}\int_{B_r}V\cdot\nabla(w^2)\omega_r^{\alpha+1}\dd\\
			&=-\frac{1}{2}\int_{B_r}(\operatorname{div}V)w^2\omega_r^{\alpha+1}\dd
			-\frac{1}{2}\int_{B_r}w^2V\cdot\nabla(\omega_r^{\alpha+1})\dd.
		\end{align*}
		The first term satisfies
		\begin{equation*}			\lambda r V_1\int_{B_r}w^2\omega_r^{\alpha+1}\dd
			\leq \lambda r V_1\cdot\frac{r^2}{\lambda}\mathbb H(r)=V_1r^3\mathbb H(r),
		\end{equation*}
		and the second term satisfies
		\begin{align*}			\lambda rV_0\int_{B_r}w^2|\nabla(\omega_r^{\alpha+1})|\dd
			&\leq C\lambda rV_0\alpha r\int_{B_r}w^2\omega_r^\alpha\dd\\
			&\leq C\alpha\lambda r^2V_0\cdot\frac{1}{\lambda}\mathbb H(r)
			=C\alpha V_0r^2\mathbb H(r).
		\end{align*}
		This proves \eqref{eq:prod-h}.
	\end{proof}
	
	\begin{lemma}
		\label{lem:integrand-expansion}
		With $w=\Delta u$ and $\Delta w=-Ww-V\cdot\nabla u$, one has
		\begin{equation}			\Delta(uw)=2\nabla u\cdot\nabla w+w^2-Wuw-u(V\cdot\nabla u),
			\label{eq:expansion-Delta-uw-lemma}
		\end{equation}
		and, using the summation convention,
		\begin{align}			\Delta(w\Delta w)
			&=(\Delta w)^2+W^2w^2+WwV^j\partial_ju-4w\partial_iW\partial_iw-(\Delta W)w^2\notag\\
			&\quad-w(\Delta V^j)\partial_ju-2w\partial_iV^j\partial_{ij}u-wV^j\partial_jw
			-2W|\nabla w|^2\notag\\
			&\quad-2\partial_iw\partial_iV^j\partial_ju-2\partial_iwV^j\partial_{ij}u.
			\label{eq:expansion-wDeltaw-lemma}
		\end{align}
	\end{lemma}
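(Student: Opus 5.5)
Both identities follow from the product rule
\[
\Delta(fg)=f\Delta g+g\Delta f+2\nabla f\cdot\nabla g,
\]
combined with the system $\Delta u=w$, $\Delta w=-Ww-V\cdot\nabla u$. Since $u$ is smooth and $W,V^j\in C^2$, every term below makes sense pointwise: $\Delta(w\Delta w)$ involves at most second derivatives of $W$ and $V^j$.

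For \eqref{eq:expansion-Delta-uw-lemma}, take $f=u$ and $g=w$. This gives
\[
\Delta(uw)=u\Delta w+w\Delta u+2\nabla u\cdot\nabla w .
\]
Substituting $\Delta u=w$ and $\Delta w=-Ww-V\cdot\nabla u$ yields the claimed formula at once.

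For \eqref{eq:expansion-wDeltaw-lemma}, take $f=w$ and $g=\Delta w$, so that
\[
\Delta(w\Delta w)=(\Delta w)^2+w\,\Delta(\Delta w)+2\nabla w\cdot\nabla(\Delta w).
\]
The term $(\Delta w)^2$ is kept as is. Two quantities then have to be computed from $\Delta w=-Ww-V^j\partial_ju$.

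\emph{Step 1: $\Delta(\Delta w)$.} Applying the product rule to each piece gives
\[
\Delta(Ww)=w\Delta W+W\Delta w+2\partial_iW\partial_iw,\qquad
\Delta(V^j\partial_ju)=(\Delta V^j)\partial_ju+V^j\partial_jw+2\partial_iV^j\partial_{ij}u .
\]
In the second formula we commuted $\Delta$ with $\partial_j$ and used $\Delta u=w$. In the first we insert $\Delta w=-Ww-V\cdot\nabla u$ once more. Multiplying by $-w$ then gives
\[
w\Delta(\Delta w)=W^2w^2+WwV^j\partial_ju-(\Delta W)w^2-2w\partial_iW\partial_iw-w(\Delta V^j)\partial_ju-wV^j\partial_jw-2w\partial_iV^j\partial_{ij}u .
\]

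\emph{Step 2: the gradient cross term.} Differentiating $\Delta w$ gives
\[
\partial_i(\Delta w)=-w\partial_iW-W\partial_iw-\partial_iV^j\partial_ju-V^j\partial_{ij}u,
\]
and therefore
\[
2\partial_iw\,\partial_i(\Delta w)=-2w\partial_iW\partial_iw-2W|\nabla w|^2-2\partial_iw\partial_iV^j\partial_ju-2\partial_iwV^j\partial_{ij}u .
\]

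Adding Steps 1 and 2 to $(\Delta w)^2$, the two contributions $-2w\partial_iW\partial_iw$ combine into $-4w\partial_iW\partial_iw$. All remaining terms match \eqref{eq:expansion-wDeltaw-lemma} one for one.

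There is no genuine obstacle here. The only point needing care is bookkeeping: the factor $2$ on the cross terms must be tracked, and both places where $\Delta w$ is replaced by the equation (inside $\Delta(Ww)$ and in the leading $(\Delta w)^2$, which is left unexpanded) must be handled consistently.
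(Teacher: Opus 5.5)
Your proposal is correct and follows the paper's proof essentially step for step. You apply the product rule to $uw$ and to $w\,\Delta w$, then expand $\Delta(Ww)$ and $\Delta(V^j\partial_ju)$ with $\Delta u=w$ and the equation for $\Delta w$, compute $\partial_i(\Delta w)$, and merge the two $-2w\partial_iW\partial_iw$ contributions into $-4w\partial_iW\partial_iw$; your intermediate formulas match the paper's.
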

	
	\begin{proof}
		Since $w=\Delta u$, the product rule gives
		\begin{align*}			\Delta(uw)
			&=u\Delta w+w\Delta u+2\nabla u\cdot\nabla w\\
			&=u\Delta w+w^2+2\nabla u\cdot\nabla w.
		\end{align*}
		Using $\Delta w=-Ww-V\cdot\nabla u$, this becomes
		\begin{equation*}			\Delta(uw)=-Wuw-u(V\cdot\nabla u)+w^2+2\nabla u\cdot\nabla w,
		\end{equation*}
		which is \eqref{eq:expansion-Delta-uw-lemma}.
		
		To prove \eqref{eq:expansion-wDeltaw-lemma}, we write
		\begin{equation}			\Delta(w\Delta w)=(\Delta w)^2+w\Delta(\Delta w)+2\nabla w\cdot\nabla(\Delta w).
			\label{eq:Delta-wDeltaw-product-start}
		\end{equation}
		The first derivatives of $\Delta w$ are
		\begin{align}			\partial_i(\Delta w)
			&=\partial_i(-Ww-V^j\partial_j u)\notag\\
			&=-(\partial_iW)w-W\partial_iw-(\partial_iV^j)\partial_ju-V^j\partial_{ij}u.
			\label{eq:partial-Deltaw-expansion-lemma}
		\end{align}
		Next compute $\Delta(\Delta w)$.  Since
		\begin{equation*}			\Delta(\Delta w)=-\Delta(Ww)-\Delta(V^j\partial_j u),
		\end{equation*}
		we compute
		\begin{align}			\Delta(Ww)
			&=\partial_i\partial_i(Ww)\notag\\
			&=\partial_i\bigl((\partial_iW)w+W\partial_iw\bigr)\notag\\
			&=(\partial_{ii}W)w+(\partial_iW)\partial_iw+(\partial_iW)\partial_iw+W\partial_{ii}w\notag\\
			&=(\Delta W)w+2\partial_iW\partial_iw+W\Delta w\notag\\
			&=(\Delta W)w+2\partial_iW\partial_iw+W(-Ww-V^j\partial_ju)\notag\\
			&=(\Delta W)w+2\partial_iW\partial_iw-W^2w-WV^j\partial_ju.
			\label{eq:Delta-Ww-expansion-lemma}
		\end{align}
		Similarly,
		\begin{align}			\Delta(V^j\partial_ju)
			&=\partial_i\partial_i(V^j\partial_ju)\notag\\
			&=\partial_i\bigl((\partial_iV^j)\partial_ju+V^j\partial_{ij}u\bigr)\notag\\
			&=(\partial_{ii}V^j)\partial_ju+(\partial_iV^j)\partial_{ij}u
			+(\partial_iV^j)\partial_{ij}u+V^j\partial_{iij}u\notag\\
			&=(\Delta V^j)\partial_ju+2\partial_iV^j\partial_{ij}u+V^j\partial_j(\Delta u)\notag\\
			&=(\Delta V^j)\partial_ju+2\partial_iV^j\partial_{ij}u+V^j\partial_jw.
			\label{eq:Delta-Vgradu-expansion-lemma}
		\end{align}
		Combining \eqref{eq:Delta-Ww-expansion-lemma} and \eqref{eq:Delta-Vgradu-expansion-lemma},
		\begin{align}			\Delta(\Delta w)
			&=W^2w+WV^j\partial_ju-2\partial_iW\partial_iw-(\Delta W)w\notag\\
			&\quad-(\Delta V^j)\partial_ju-2\partial_iV^j\partial_{ij}u-V^j\partial_jw.
			\label{eq:Delta-Deltaw-expansion-lemma}
		\end{align}
		Now substitute \eqref{eq:partial-Deltaw-expansion-lemma} and \eqref{eq:Delta-Deltaw-expansion-lemma} into \eqref{eq:Delta-wDeltaw-product-start}.  First,
		\begin{align}			w\Delta(\Delta w)
			&=W^2w^2+WwV^j\partial_ju-2w\partial_iW\partial_iw-(\Delta W)w^2\notag\\
			&\quad-w(\Delta V^j)\partial_ju-2w\partial_iV^j\partial_{ij}u-wV^j\partial_jw.
			\label{eq:w-Delta-Deltaw-expansion-lemma}
		\end{align}
		Second,
		\begin{align}			2\nabla w\cdot\nabla(\Delta w)
			&=2\partial_iw\partial_i(\Delta w)\notag\\
			&=-2\partial_iw(\partial_iW)w-2W|\nabla w|^2
			-2\partial_iw(\partial_iV^j)\partial_ju-2\partial_iwV^j\partial_{ij}u.
			\label{eq:gradw-gradDeltaw-expansion-lemma}
		\end{align}
		Adding \eqref{eq:w-Delta-Deltaw-expansion-lemma} and \eqref{eq:gradw-gradDeltaw-expansion-lemma}, we obtain
		\begin{align*}			w\Delta(\Delta w)+2\nabla w\cdot\nabla(\Delta w)
			&=W^2w^2+WwV^j\partial_ju-4w\partial_iW\partial_iw-(\Delta W)w^2\\
			&\quad-w(\Delta V^j)\partial_ju-2w\partial_iV^j\partial_{ij}u-wV^j\partial_jw\\
			&\quad-2W|\nabla w|^2-2\partial_iw\partial_iV^j\partial_ju-2\partial_iwV^j\partial_{ij}u.
		\end{align*}
		Adding the remaining first term $(\Delta w)^2$ in \eqref{eq:Delta-wDeltaw-product-start} proves \eqref{eq:expansion-wDeltaw-lemma}.
	\end{proof}
	
	\begin{lemma}
		\label{lem:true-frequency-lower-bound}
		For every $0<r<R$ such that $\mathbb H(r)>0$, one has
		\begin{equation}			\mathbb N(r)\geq -B(r),
			\qquad
			|\mathbb N(r)|\leq \mathbb N(r)+2B(r),
			\label{eq:true-frequency-lower-bound}
		\end{equation}
	\end{lemma}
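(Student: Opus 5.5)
Since $\mathbb D(r)\geq 0$, the plan is to show $|\mathbb L(r)|\leq B(r)\mathbb H(r)$ plus a part absorbed by $\mathbb D(r)$. Then $\mathbb N(r)\geq -B(r)$, and the second inequality in \eqref{eq:true-frequency-lower-bound} follows: if $\mathbb N\geq0$ it is trivial, and if $\mathbb N<0$ then $|\mathbb N|=-\mathbb N\leq B\leq \mathbb N+2B$. More precisely, I will aim for
\[
|\mathbb L(r)|\leq \tfrac12\mathbb D(r)+\tfrac{1}{2}B(r)\mathbb H(r),
\]
and possibly only $\mathbb D+\mathbb L\geq -B\mathbb H$ directly.

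The first term of $\mathbb L$ is $\int uw\omega_r^\alpha$. By Cauchy's inequality, as in \eqref{eq:grad-est-first-term}, it is bounded by $\big(\int u^2\omega_r^{\alpha-1}\big)^{1/2}\big(\int w^2\omega_r^{\alpha+1}\big)^{1/2}\leq r^2\cdot\frac{1}{\sqrt\lambda r}\mathbb H=\frac{r}{\sqrt\lambda}\mathbb H$. This produces the $C_0r/\sqrt\lambda$ term of $B(r)$.

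For the second term I substitute \eqref{eq:Delta-w-equation}:
\[
\lambda r^2\int w\Delta w\,\omega_r^\alpha=-\lambda r^2\int Ww^2\omega_r^\alpha-\lambda r^2\int w(V\cdot\nabla u)\omega_r^\alpha .
\]
The $W$ part is bounded by $W_0\lambda r^2\cdot r^2\int w^2\omega_r^{\alpha-1}\leq W_0r^2\mathbb H$, which gives the $W_0r^2$ term.

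The $V$ part is the main obstacle, since it must be matched with $V_0(\alpha\sqrt\lambda r^2+\lambda^{1/4}r^{5/2})$.
\begin{itemize}
\item \emph{First bound.} Estimate $\lambda r^2V_0\int|w||\nabla u|\omega_r^\alpha$ by Cauchy's inequality, using $\int w^2\omega_r^{\alpha+1}\leq \mathbb H/\lambda$ and $\int|\nabla u|^2\omega_r^{\alpha-1}$. The latter is not directly controlled. Instead, take $\int |\nabla u|^2\omega_r^{\alpha}\leq \mathbb D$ together with $\int w^2\omega_r^\alpha\leq \mathbb H/\lambda$. This gives $\lambda r^2V_0\sqrt{\mathbb H\mathbb D/\lambda}=\sqrt\lambda r^2V_0\sqrt{\mathbb H\mathbb D}$. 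Young's inequality then yields $\frac12\mathbb D+C\lambda r^4V_0^2\mathbb H$.
\item \emph{Avoiding $V_0^2$.} That bound is quadratic in $V_0$, whereas $B$ is linear in $V_0$. So instead I use an interpolation route. I bound $\int|\nabla u|^2\omega_r^{\alpha+1}$ via \eqref{eq:grad-u-control} by $C(\alpha^2r^2+r^3/\sqrt\lambda)\mathbb H$. Cauchy's inequality with weights $\omega_r^{(\alpha-1)/2}$ on $w$ and $\omega_r^{(\alpha+1)/2}$ on $\nabla u$ gives
\[
\lambda r^2V_0\Big(\frac{\mathbb H}{\lambda r^2}\Big)^{1/2}C\big(\alpha r+r^{3/2}\lambda^{-1/4}\big)\mathbb H^{1/2}=CV_0\big(\alpha\sqrt\lambda r^2+\lambda^{1/4}r^{5/2}\big)\mathbb H.
\]
This matches $B(r)$ exactly, so it is the intended route.
\end{itemize}

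Collecting the bounds gives $|\mathbb L(r)|\leq B(r)\mathbb H(r)$ with $C_0$ large, without using $\mathbb D$ at all. Hence $\mathbb N\geq \mathbb D/\mathbb H-B\geq -B$, and the absolute-value bound follows as explained above.
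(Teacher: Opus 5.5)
Your final argument is correct and is essentially the paper's proof. It uses the same three-way split of $\mathbb L$ via $\Delta w=-Ww-V\cdot\nabla u$, the same Cauchy bounds for the $uw$ and $W$ terms, the same weighted Cauchy step combined with \eqref{eq:grad-u-control} for the drift term, and the same case split for $|\mathbb N|$. The only difference is that the paper enlarges $C_0$ to get $|\mathbb L|\leq\frac14B\mathbb H$, which it reuses later in \eqref{eq:true-frequency-strong-bounds}.
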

	
	\begin{proof}
		By \eqref{eq:Delta-w-equation},
		\begin{align}			\mathbb L(r)
			&=\int_{B_r}uw\omega_r^\alpha\dd
			-\lambda r^2\int_{B_r}Ww^2\omega_r^\alpha\dd
			-\lambda r^2\int_{B_r}w(V\cdot\nabla u)\omega_r^\alpha\dd.
			\label{eq:L-split-main}
		\end{align}
		For the first term,
		\begin{align}			\left|\int_{B_r}uw\omega_r^\alpha\dd\right|
			&\leq
			\left(\int_{B_r}u^2\omega_r^{\alpha-1}\dd\right)^{1/2}
			\left(\int_{B_r}w^2\omega_r^{\alpha+1}\dd\right)^{1/2}\notag\\
			&\leq r^2
			\left(\int_{B_r}u^2\omega_r^{\alpha-1}\dd\right)^{1/2}
			\left(\int_{B_r}w^2\omega_r^{\alpha-1}\dd\right)^{1/2}\notag\\
			&=\frac{r}{\sqrt\lambda}
			\left(\int_{B_r}u^2\omega_r^{\alpha-1}\dd\right)^{1/2}
			\left(\lambda r^2\int_{B_r}w^2\omega_r^{\alpha-1}\dd\right)^{1/2}\notag\\
			&\leq \frac{r}{2\sqrt\lambda}\mathbb H(r).
			\label{eq:L-uw-est}
		\end{align}
		For the $W$-term,
		\begin{align}			\lambda r^2\left|\int_{B_r}Ww^2\omega_r^\alpha\dd\right|
			&\leq \lambda r^2W_0r^2\int_{B_r}w^2\omega_r^{\alpha-1}\dd
			\leq W_0r^2\mathbb H(r).
			\label{eq:L-W-est}
		\end{align}
		For the drift term, use Cauchy's inequality and \eqref{eq:grad-u-control}:
		\begin{align}			&\lambda r^2\left|\int_{B_r}w(V\cdot\nabla u)\omega_r^\alpha\dd\right|\notag\\
			&\quad\leq \lambda r^2V_0
			\left(\int_{B_r}w^2\omega_r^{\alpha-1}\dd\right)^{1/2}
			\left(\int_{B_r}|\nabla u|^2\omega_r^{\alpha+1}\dd\right)^{1/2}\notag\\
			&\quad\leq \lambda r^2V_0\frac{\mathbb H(r)^{1/2}}{\sqrt\lambda r}
			C\left(\alpha r+\frac{r^{3/2}}{\lambda^{1/4}}\right)\mathbb H(r)^{1/2}\notag\\
			&\quad\leq C V_0\left(\alpha\sqrt\lambda\,r^2+\lambda^{1/4}r^{5/2}\right)\mathbb H(r).
			\label{eq:L-V-est}
		\end{align}
		Combining \eqref{eq:L-split-main}--\eqref{eq:L-V-est}, and enlarging
		$C_0$ in \eqref{eq:Bstar-def} if necessary, gives
		\begin{equation}			|\mathbb L(r)|\leq \frac14 B(r)\mathbb H(r).
			\label{eq:L-by-BH}
		\end{equation}
		Since $\mathbb D(r)\geq0$,
		\begin{equation*}			\mathbb N(r)=\frac{\mathbb D(r)+\mathbb L(r)}{\mathbb H(r)}
			\geq -\frac{|\mathbb L(r)|}{\mathbb H(r)}\geq-\frac14B(r)\geq-B(r).
		\end{equation*}
		If $\mathbb N(r)\geq0$, then $|\mathbb N(r)|=\mathbb N(r)\leq \mathbb N(r)+2B(r)$.  If $\mathbb N(r)<0$, the lower bound gives $-\mathbb N(r)\leq B(r)$, and hence
		\begin{equation*}			|\mathbb N(r)|=-\mathbb N(r)=\mathbb N(r)-2\mathbb N(r)\leq \mathbb N(r)+2B(r).
		\end{equation*}
		Moreover, \eqref{eq:L-by-BH} gives the stronger estimates
		\begin{equation}			\mathbb N(r)\geq-\frac14B(r),\qquad
			\frac{\mathbb D(r)}{\mathbb H(r)}\leq \mathbb N(r)+\frac14B(r),\qquad
			|\mathbb N(r)|\leq \mathbb N(r)+\frac12B(r).
			\label{eq:true-frequency-strong-bounds}
		\end{equation}
		Thus \eqref{eq:true-frequency-lower-bound} is proved.
	\end{proof}
	
	\begin{proposition}
		\label{prop:true-frequency-differential-ineq}
		For every $0<r<R$ such that $\mathbb H(r)>0$, one has
		\begin{equation}			\mathbb N'(r)+A(r)\mathbb N(r)\geq -P(r).
			\label{eq:true-frequency-differential-ineq}
		\end{equation}
	\end{proposition}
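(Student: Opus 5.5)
We will compute $\mathbb N'=\bigl((\mathbb D+\mathbb L)'\mathbb H-(\mathbb D+\mathbb L)\mathbb H'\bigr)/\mathbb H^2$ using Lemma~\ref{lem:H-D-prime}. The goal is to show that, up to controlled errors, the leading part is a Cauchy--Schwarz nonnegative expression.

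\textbf{Step 1: differentiating $\mathbb L$.} Applying Lemma~\ref{lem:weighted-differentiation} with $\gamma=\alpha$ to $f=uw$ and to $f=w\Delta w$ gives
\[
\frac{d}{dr}\int_{B_r}f\omega_r^\alpha\dd=\frac{2\alpha+n}{r}\int_{B_r}f\omega_r^\alpha\dd+\frac{1}{2(\alpha+1)r}\int_{B_r}\Delta f\,\omega_r^{\alpha+1}\dd .
\]
The factor $r^2$ in front of the $w$-part also contributes $\frac{2}{r}\lambda r^2\int w\Delta w\,\omega_r^\alpha$. Then:
\begin{itemize}
\item Insert the expansions of Lemma~\ref{lem:integrand-expansion}.
\item Bound every $\omega_r^{\alpha+1}$-weighted product by Lemma~\ref{lem:elementary-products}, namely \eqref{eq:prod-a}--\eqref{eq:prod-h}.
\item Use the Cauchy inequality $\sqrt{\mathbb H\mathbb D}\le \tfrac12(\epsilon\mathbb D+\epsilon^{-1}\mathbb H)$ where needed.
\end{itemize}
All terms except two should then be bounded by $C(\cG\,\mathbb D+\cE\,\mathbb H)$; the coefficients in \eqref{eq:G-def}--\eqref{eq:E-def} are designed to absorb exactly these terms. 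The two exceptions are the good terms $w^2$ and $\lambda r^2(\Delta w)^2$ coming from $\Delta(uw)$ and $\Delta(w\Delta w)$. They enter with coefficient $+\frac{1}{2(\alpha+1)r}$ and will cancel the negative terms $-\frac{1}{4\alpha r}\int(w^2+\lambda r^2(\Delta w)^2)\omega_r^{\alpha+1}$ in \eqref{eq:D-prime}, since $\frac{1}{2(\alpha+1)}\ge\frac1{4\alpha}$ for $\alpha\ge1$. The leftover zeroth-order piece $\frac{2\alpha+n}{r}\mathbb L$ is handled by comparing it with $\frac{2\alpha+n-2}{r}\mathbb L$ and using $|\mathbb L|\le\frac14B\mathbb H$ from \eqref{eq:L-by-BH}. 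This produces the term $\frac{2}{r}B$ in $P$.

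\textbf{Step 2: the Rellich-type positivity.} We will rewrite the main part using \eqref{eq:H-prime} and \eqref{eq:D-prime}. The terms $\frac{2\alpha+n-2}{r}$ cancel between numerator and denominator. What remains is
\[
\frac{4\alpha}{r}\int(X^2+\lambda r^2Y^2)\omega_r^{\alpha-1}\,\mathbb H-\frac{1}{\alpha r}(\mathbb D+\mathbb L)^2 .
\]
To show this is nonnegative up to errors, we express $\mathbb D+\mathbb L$ as $2\int(uX+\lambda r^2 wY)\omega_r^{\alpha-1}$ plus error terms. This uses $2\alpha\int u\,x\cdot\nabla u\,\omega_r^{\alpha-1}=\mathbb D_u+\int uw\omega_r^\alpha$, the analogous identity for $w$, and the extra $w$ term in $Y$. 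The latter accounts for $2\lambda\alpha r^2\mathbb H_w$, as in the proof of Lemma~\ref{lem:H-D-prime}. The $\omega_r/(4\alpha)$ corrections in $X,Y$ generate error terms such as $\frac1{4\alpha}\int uw\omega_r^\alpha$, together with the $\mathbb L$-contributions; these are bounded by $\frac{B}{\alpha}\mathbb H$. Cauchy--Schwarz then gives $4(\int(uX+\lambda r^2wY)\omega^{\alpha-1})^2\le4\mathbb H\int(X^2+\lambda r^2Y^2)\omega^{\alpha-1}$.

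Collecting the error terms gives $(\mathbb D+\mathbb L)^2\le(\text{main})^2+\frac{C B}{\alpha}\mathbb H(|\mathbb D+\mathbb L|+B\mathbb H)$. Dividing by $\alpha r\mathbb H^2$ yields errors of the form $\frac{B}{\alpha r}|\mathbb N|+\frac{B^2}{\alpha r}$. By \eqref{eq:true-frequency-lower-bound}, $|\mathbb N|\le\mathbb N+2B$, so these become $A\mathbb N+P$ with the stated $A,P$. Similarly, the Step 1 error $\cG\mathbb D/\mathbb H$ is converted using $\mathbb D/\mathbb H\le\mathbb N+\frac14B$ from \eqref{eq:true-frequency-strong-bounds}, producing the terms $\cG\mathbb N$ and $\cG B$.

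\textbf{Main obstacle.} The hardest part is the bookkeeping in Step 1 for the third-order terms $\partial_iw\,V^j\partial_{ij}u$ and $w\,\partial_iV^j\partial_{ij}u$. These require the Hessian estimate \eqref{eq:hessian-u-control} through \eqref{eq:prod-d}--\eqref{eq:prod-e}. The term $\lambda r\int\nabla w\cdot V\,D^2u\,\omega^{\alpha+1}/\alpha$ gives $C\sqrt\lambda V_0\mathbb D/\alpha\cdot r$, which must be absorbed in $\sqrt\lambda V_0 r$ in $\cG$. Terms such as $WwV\cdot\nabla u$ are reduced to the $\lambda V_0^2$ and $W_0V_0$ entries of $\cG$ after Cauchy's inequality. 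If a term does not fit directly, we will first integrate by parts once more, moving the derivative off $D^2u$ onto $V$ and the weight, which costs an extra $V_1$ or $\alpha/r$ factor.
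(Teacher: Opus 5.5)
Your proposal is correct and follows the paper's skeleton. The shared steps are Lemma~\ref{lem:H-D-prime}, Cauchy--Schwarz for $(u,\sqrt\lambda\,rw)$ against $(X,\sqrt\lambda\,rY)$, weighted differentiation of $\mathbb L_1,\mathbb L_2$ with the expansions of Lemma~\ref{lem:integrand-expansion} estimated by Lemma~\ref{lem:elementary-products}, and the conversions $\mathbb D/\mathbb H\le\mathbb N+\frac14B$ and $|\mathbb N|\le\mathbb N+2B$.

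The one genuine difference is how you handle the negative term $-\frac{1}{4\alpha r}\int_{B_r}(w^2+\lambda r^2(\Delta w)^2)\omega_r^{\alpha+1}\dd$ from \eqref{eq:D-prime}.
\begin{itemize}
\item You cancel it against the nonnegative $w^2$ and $\lambda r^2(\Delta w)^2$ contributions to $\mathbb L_1'$ and $\mathbb L_2'$. This is legitimate: in each pair the weight $\omega_r^{\alpha+1}$ and the factors $\frac1r$, resp.\ $\lambda r$, match, and only the numerical coefficients $\frac{1}{2(\alpha+1)}\ge\frac{1}{4\alpha}$ differ.
\item The paper instead bounds that term as an error through \eqref{eq:w2-alpha1-H}--\eqref{eq:Deltaw-square-integral-D}. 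It also estimates $(\Delta w)^2$ inside $\mathbb L_2'$ in absolute value, so every integrand term is treated uniformly without sign bookkeeping.
\item Your variant makes entries such as $r/(\alpha\lambda)$ and $\lambda V_0^2r^3/\alpha$ of $\cE,\cG$ unnecessary. Since $A$ and $P$ are fixed, this does not change the statement.
\end{itemize}

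A few corrections are needed.

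\emph{Normalization in Step~2.} The exact identity is \eqref{eq:J-ident-main}: $2\alpha\int_{B_r}(uX+\lambda r^2wY)\omega_r^{\alpha-1}\dd=\mathbb D+\frac12\mathbb L$. So $\mathbb D+\mathbb L$ is not $2\int_{B_r}(uX+\lambda r^2wY)\omega_r^{\alpha-1}\dd$ plus a small error. After multiplying by $2\alpha$, the $\omega_r/(4\alpha)$ corrections contribute $-\frac12\mathbb L$. This has size up to $\frac18B\mathbb H$, not $O(B\mathbb H/\alpha)$. With the correct size your perturbation argument still gives an error $\frac{B}{4\alpha r}|\mathbb N|$, which is the form you state. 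Equivalently, as in the paper, $(\mathbb D+\frac12\mathbb L)^2-\mathbb D(\mathbb D+\mathbb L)=\frac14\mathbb L^2\ge0$ holds exactly, leaving only the term $-\mathbb L\mathbb N/(\alpha r\mathbb H)$.

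\emph{The extra $\frac2r\mathbb L_2$ term.} It needs the separate bound $|\mathbb L_2|\le\frac14B\mathbb H$, which is \eqref{eq:L2-by-BH}; the bound $|\mathbb L|\le\frac14B\mathbb H$ alone is not enough.

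\emph{The case $\mathbb N<0$.} Here, enlarging your constant in front of $\mathbb N$ to the $C_0$ in $A$ makes $A\mathbb N$ more negative, so "these become $A\mathbb N+P$" is not automatic. Use $\mathbb N\ge-\frac14B$ once more and absorb the loss into the $\cG B$ and $B^2/(\alpha r)$ terms of $P$, as the paper does in its second case.
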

	
	\begin{proof}
		We first estimate the contribution of $\mathbb D$ to $\mathbb N'$. Let $X$ and $Y$ be
		as in \eqref{eq:X-Y-def}. Integration by parts gives
		\begin{align}			2\alpha\int_{B_r}uX\omega_r^{\alpha-1}\dd
			&=2\alpha\int_{B_r}u(x\cdot\nabla u)\omega_r^{\alpha-1}\dd
			-\frac{1}{2}\int_{B_r}uw\omega_r^\alpha\dd\notag\\
			&=\int_{B_r}|\nabla u|^2\omega_r^\alpha\dd+
			\frac{1}{2}\int_{B_r}uw\omega_r^\alpha\dd.
			\label{eq:uX-ident}
		\end{align}
		Here we used $\nabla(\omega_r^\alpha)=-2\alpha x\omega_r^{\alpha-1}$ and $\Delta u=w$. Similarly,
		\begin{align}			2\alpha\lambda r^2\int_{B_r}wY\omega_r^{\alpha-1}\dd
			&=\lambda r^2\int_{B_r}|\nabla w|^2\omega_r^\alpha\dd
			+2\lambda\alpha r^2\int_{B_r}w^2\omega_r^{\alpha-1}\dd\notag\\
			&\quad+
			\frac{1}{2}\lambda r^2\int_{B_r}w\Delta w\,\omega_r^\alpha\dd.
			\label{eq:wY-ident}
		\end{align}
		Adding \eqref{eq:uX-ident} and \eqref{eq:wY-ident} gives
		\begin{equation}			2\alpha\int_{B_r}(uX+\lambda r^2wY)\omega_r^{\alpha-1}\dd
			=\mathbb D(r)+\frac{1}{2}\mathbb L(r).
			\label{eq:J-ident-main}
		\end{equation}
		By Cauchy's inequality applied to the two vector-valued functions $(u,\sqrt\lambda rw)$ and $(X,\sqrt\lambda rY)$ with measure $\omega_r^{\alpha-1}dx$,
		\begin{align*}			&\left(\int_{B_r}(uX+\lambda r^2wY)\omega_r^{\alpha-1}\dd\right)^2\\
			&\quad\leq
			\left(\int_{B_r}(u^2+\lambda r^2w^2)\omega_r^{\alpha-1}\dd\right)
			\left(\int_{B_r}(X^2+\lambda r^2Y^2)\omega_r^{\alpha-1}\dd\right)\\
			&\quad=\mathbb H(r)\int_{B_r}(X^2+\lambda r^2Y^2)\omega_r^{\alpha-1}\dd .
		\end{align*}
		Using \eqref{eq:J-ident-main}, this becomes
		\begin{equation}			\mathbb H(r)\int_{B_r}(X^2+\lambda r^2Y^2)\omega_r^{\alpha-1}\dd
			\geq \frac{1}{4\alpha^2}\left(\mathbb D(r)+\frac{1}{2}\mathbb L(r)\right)^2.
			\label{eq:CS-D-part-final}
		\end{equation}
		From \eqref{eq:H-prime} and \eqref{eq:D-prime}, the common $(2\alpha+n-2)/r$ terms cancel in $\mathbb D'\mathbb H-\mathbb D\mathbb H'$, and hence
		\begin{align*}			\mathbb D'\mathbb H-\mathbb D\mathbb H'
			&=\frac{4\alpha}{r}\mathbb H\int_{B_r}(X^2+\lambda r^2Y^2)\omega_r^{\alpha-1}\dd\\
			&\quad-
			\frac{\mathbb H}{4\alpha r}\int_{B_r}\bigl(w^2+\lambda r^2(\Delta w)^2\bigr)\omega_r^{\alpha+1}\dd
			-\frac{\mathbb D(\mathbb D+\mathbb L)}{\alpha r}.
		\end{align*}
		By \eqref{eq:CS-D-part-final},
		\begin{align*}			&\frac{4\alpha}{r}\mathbb H\int_{B_r}(X^2+\lambda r^2Y^2)\omega_r^{\alpha-1}\dd
			-\frac{\mathbb D(\mathbb D+\mathbb L)}{\alpha r}\\
			&\quad\geq \frac{1}{\alpha r}\left(\mathbb D+\frac{1}{2}\mathbb L\right)^2-\frac{\mathbb D(\mathbb D+\mathbb L)}{\alpha r}
			=\frac{\mathbb L^2}{4\alpha r}\geq0.
		\end{align*}
		Therefore
		\begin{equation}			\frac{\mathbb D'\mathbb H-\mathbb D\mathbb H'}{\mathbb H^2}
			\geq -\frac{1}{4\alpha r\mathbb H}\int_{B_r}\bigl(w^2+\lambda r^2(\Delta w)^2\bigr)\omega_r^{\alpha+1}\dd.
			\label{eq:D-part-negative}
		\end{equation}
		The first term satisfies
		\begin{equation}			\int_{B_r}w^2\omega_r^{\alpha+1}\dd
			\leq r^4\int_{B_r}w^2\omega_r^{\alpha-1}\dd
			\leq \frac{r^2}{\lambda}\mathbb H(r).
			\label{eq:w2-alpha1-H}
		\end{equation}
		For the second term, \eqref{eq:Delta-w-equation} gives
		\begin{equation}			(\Delta w)^2=(-Ww-V\cdot\nabla u)^2
			\leq 2W_0^2w^2+2V_0^2|\nabla u|^2.
			\label{eq:Deltaw-square-bound}
		\end{equation}
		Thus
		\begin{align}			\lambda r^2\int_{B_r}(\Delta w)^2\omega_r^{\alpha+1}\dd
			&\leq C\lambda r^2W_0^2\int_{B_r}w^2\omega_r^{\alpha+1}\dd
			+C\lambda r^2V_0^2\int_{B_r}|\nabla u|^2\omega_r^{\alpha+1}\dd\notag\\
			&\leq CW_0^2r^4\mathbb H(r)+C\lambda V_0^2r^4\mathbb D(r),
			\label{eq:Deltaw-square-integral-D}
		\end{align}
		because $\omega_r^{\alpha+1}\leq r^2\omega_r^\alpha$.  Also
		\begin{equation}			\frac{\mathbb D(r)}{\mathbb H(r)}=\mathbb N(r)-\frac{\mathbb L(r)}{\mathbb H(r)}\leq \mathbb N(r)+B(r).
			\label{eq:D-over-H-upper-main}
		\end{equation}
		Substituting \eqref{eq:w2-alpha1-H} and \eqref{eq:Deltaw-square-integral-D} into \eqref{eq:D-part-negative}, and then using \eqref{eq:D-over-H-upper-main}, gives
		\begin{align}			\frac{\mathbb D'\mathbb H-\mathbb D\mathbb H'}{\mathbb H^2}
			&\geq -C\left(\frac{r}{\alpha\lambda}+\frac{W_0^2r^3}{\alpha}
			+\frac{\lambda V_0^2r^3}{\alpha}\bigl(\mathbb N(r)+B(r)\bigr)\right).
			\label{eq:D-part-final}
		\end{align}
		
		To estimate the contribution of $\mathbb L$, write
		\begin{equation*}			\mathbb L=\mathbb L_1+\mathbb L_2,
			\qquad
			\mathbb L_1:=\int_{B_r}uw\omega_r^\alpha\dd,
			\qquad
			\mathbb L_2:=\lambda r^2\int_{B_r}w\Delta w\,\omega_r^\alpha\dd .
		\end{equation*}
		For $\mathbb L_1$, Lemma~\ref{lem:weighted-differentiation} with $f=uw$ and $\gamma=\alpha$ gives
		\begin{equation}			\mathbb L_1'=\frac{2\alpha+n}{r}\mathbb L_1+
			\frac{1}{2(\alpha+1)r}\int_{B_r}\Delta(uw)\omega_r^{\alpha+1}\dd.
			\label{eq:L1-prime-main}
		\end{equation}
		Because $\Delta u=w$ and $\Delta w=-Ww-V\cdot\nabla u$,
		\begin{align}			\Delta(uw)
			&=u\Delta w+w\Delta u+2\nabla u\cdot\nabla w\notag\\
			&=-Wuw-u(V\cdot\nabla u)+w^2+2\nabla u\cdot\nabla w.
			\label{eq:Delta-uw-main}
		\end{align}
		We estimate the terms in \eqref{eq:Delta-uw-main} by Lemma~\ref{lem:elementary-products}. From \eqref{eq:prod-a},
		\begin{equation}			\frac{1}{2(\alpha+1)r}\left|\int_{B_r}2\nabla u\cdot\nabla w\,\omega_r^{\alpha+1}\dd\right|
			\leq \frac{C}{\alpha\sqrt\lambda}\mathbb D(r).
			\label{eq:L1-grad-inline}
		\end{equation}
		The term containing $w^2$ is nonnegative. In particular, \eqref{eq:w2-alpha1-H} gives
		\begin{equation}			0\leq\frac{1}{2(\alpha+1)r}\int_{B_r}w^2\omega_r^{\alpha+1}\dd
			\leq\frac{Cr}{\alpha\lambda}\mathbb H(r).
			\label{eq:L1-w2-inline}
		\end{equation}
		Furthermore, \eqref{eq:prod-f} and \eqref{eq:prod-g} imply
		\begin{equation}			\frac{1}{2(\alpha+1)r}\left|\int_{B_r}Wuw\omega_r^{\alpha+1}\dd\right|
			\leq \frac{CW_0r^2}{\alpha\sqrt\lambda}\mathbb H(r),
			\label{eq:L1-Wuw-inline}
		\end{equation}
		and
		\begin{equation}			\frac{1}{2(\alpha+1)r}\left|\int_{B_r}u(V\cdot\nabla u)\omega_r^{\alpha+1}\dd\right|
			\leq C\left(V_0r^2+\frac{V_1r^3}{\alpha}\right)\mathbb H(r).
			\label{eq:L1-drift-inline}
		\end{equation}
		Combining \eqref{eq:L1-prime-main}--\eqref{eq:L1-drift-inline}, and using the definitions of $\cG$ and $\cE$, we obtain
		\begin{equation}			\mathbb L_1'\geq \frac{2\alpha+n}{r}\mathbb L_1-C\cG(r)\mathbb D(r)-C\cE(r)\mathbb H(r).
			\label{eq:L1-prime-lower-main}
		\end{equation}
		
		We next differentiate $\mathbb L_2$. By
		Lemma~\ref{lem:weighted-differentiation},
		\begin{align}			\mathbb L_2'
			&=2\lambda r\int_{B_r}w\Delta w\,\omega_r^\alpha\dd
			+\lambda r^2\frac{d}{dr}
			\int_{B_r}w\Delta w\,\omega_r^\alpha\dd\notag\\
			&=\frac{2\alpha+n+2}{r}\mathbb L_2
			+\frac{\lambda r^2}{2(\alpha+1)r}\int_{B_r}\Delta(w\Delta w)\omega_r^{\alpha+1}\dd.
			\label{eq:L2-prime-main}
		\end{align}
		We use the expansion of $\Delta(w\Delta w)$ in Lemma~\ref{lem:integrand-expansion}.
		Since $\lambda r^2/(2(\alpha+1)r)\leq C\lambda r/\alpha$, it remains to bound the integral of each term in \eqref{eq:expansion-wDeltaw-lemma} against $\lambda r\omega_r^{\alpha+1}/\alpha$.
		
		By \eqref{eq:Deltaw-square-bound} and \eqref{eq:w2-alpha1-H},
		\begin{align}			\frac{\lambda r}{\alpha}\int_{B_r}(\Delta w)^2\omega_r^{\alpha+1}\dd
			&\leq C\frac{\lambda rW_0^2}{\alpha}\int_{B_r}w^2\omega_r^{\alpha+1}\dd
			+C\frac{\lambda rV_0^2}{\alpha}\int_{B_r}|\nabla u|^2\omega_r^{\alpha+1}\dd\notag\\
			&\leq C\frac{W_0^2r^3}{\alpha}\mathbb H(r)+C\frac{\lambda V_0^2r^3}{\alpha}\mathbb D(r).
			\label{eq:L2-Deltaw2-est}
		\end{align}
		The term $W^2w^2$ satisfies
		\begin{align}			\frac{\lambda r}{\alpha}\int_{B_r}W^2w^2\omega_r^{\alpha+1}\dd
			&\leq \frac{\lambda rW_0^2}{\alpha}\int_{B_r}w^2\omega_r^{\alpha+1}\dd\notag\\
			&\leq \frac{\lambda rW_0^2}{\alpha}\cdot\frac{r^2}{\lambda}\mathbb H(r)
			=\frac{W_0^2r^3}{\alpha}\mathbb H(r).
			\label{eq:L2-W2w2-est}
		\end{align}
		For the term $WwV^j\partial_ju$, \eqref{eq:prod-c} gives
		\begin{align}			\frac{\lambda r}{\alpha}\int_{B_r}|W|\,|w|\,|V|\,|\nabla u|\omega_r^{\alpha+1}\dd
			&\leq C\frac{\lambda rW_0V_0}{\alpha}\cdot\frac{r^2}{\sqrt\lambda}\sqrt{\mathbb H(r)\mathbb D(r)}\notag\\
			&=C\frac{W_0V_0\sqrt\lambda\,r^3}{\alpha}\sqrt{\mathbb H(r)\mathbb D(r)}\notag\\
			&\leq C\frac{W_0V_0\sqrt\lambda\,r^3}{\alpha}\mathbb D(r)
			+C\frac{W_0V_0\sqrt\lambda\,r^3}{\alpha}\mathbb H(r),
			\label{eq:L2-WV-est}
		\end{align}
		where the last line again uses $2\sqrt{\mathbb H\mathbb D}\leq \mathbb H+\mathbb D$.
		
		By \eqref{eq:prod-b},
		\begin{align}			\frac{\lambda r}{\alpha}\int_{B_r}|w|\,|\nabla W|\,|\nabla w|\omega_r^{\alpha+1}\dd
			&\leq C\frac{\lambda rW_1}{\alpha}\cdot\frac{r}{\lambda}\sqrt{\mathbb H(r)\mathbb D(r)}\notag\\
			&=C\frac{W_1r^2}{\alpha}\sqrt{\mathbb H(r)\mathbb D(r)}\notag\\
			&\leq C\frac{W_1r^2}{\alpha}\mathbb D(r)+C\frac{W_1r^2}{\alpha}\mathbb H(r).
			\label{eq:L2-gradW-est}
		\end{align}
		The term involving $\Delta W$ satisfies
		\begin{align}			\frac{\lambda r}{\alpha}\int_{B_r}|\Delta W|w^2\omega_r^{\alpha+1}\dd
			&\leq \frac{\lambda rW_2}{\alpha}\int_{B_r}w^2\omega_r^{\alpha+1}\dd\notag\\
			&\leq \frac{\lambda rW_2}{\alpha}\cdot\frac{r^2}{\lambda}\mathbb H(r)
			=\frac{W_2r^3}{\alpha}\mathbb H(r).
			\label{eq:L2-DeltaW-est}
		\end{align}
		For $w(\Delta V^j)\partial_ju$, the same product estimate \eqref{eq:prod-c} gives
		\begin{align}			\frac{\lambda r}{\alpha}\int_{B_r}|w|\,|\Delta V|\,|\nabla u|\omega_r^{\alpha+1}\dd
			&\leq C\frac{\lambda rV_2}{\alpha}\cdot\frac{r^2}{\sqrt\lambda}\sqrt{\mathbb H(r)\mathbb D(r)}\notag\\
			&=C\frac{\sqrt\lambda V_2r^3}{\alpha}\sqrt{\mathbb H(r)\mathbb D(r)}\notag\\
			&\leq C\frac{\sqrt\lambda V_2r^3}{\alpha}\mathbb D(r)+C\frac{\sqrt\lambda V_2r^3}{\alpha}\mathbb H(r).
			\label{eq:L2-DeltaV-est}
		\end{align}
		For $w\partial_iV^j\partial_{ij}u$, \eqref{eq:prod-d} yields
		\begin{align}			\frac{\lambda r}{\alpha}\int_{B_r}|w|\,|\nabla V|\,|D^2u|\omega_r^{\alpha+1}\dd
			&\leq C\sqrt\lambda V_1r^2\sqrt{\mathbb H(r)\mathbb D(r)}+C\frac{V_1r^3}{\alpha}\mathbb H(r)\notag\\
			&\leq C\sqrt\lambda V_1r^2\mathbb D(r)
			+C\left(\sqrt\lambda V_1r^2+\frac{V_1r^3}{\alpha}\right)\mathbb H(r).
			\label{eq:L2-gradV-Hessian-est}
		\end{align}
		The integration by parts estimate \eqref{eq:prod-h} gives
		\begin{equation}			\frac{\lambda r}{\alpha}\left|\int_{B_r}wV\cdot\nabla w\omega_r^{\alpha+1}\dd\right|
			\leq C\left(V_0r^2+\frac{V_1r^3}{\alpha}\right)\mathbb H(r).
			\label{eq:L2-wVgradw-est}
		\end{equation}
		For $W|\nabla w|^2$,
		\begin{align}			\frac{\lambda r}{\alpha}\int_{B_r}|W||\nabla w|^2\omega_r^{\alpha+1}\dd
			&\leq C\frac{\lambda rW_0}{\alpha}\,r^2\int_{B_r}|\nabla w|^2\omega_r^\alpha\dd\notag\\
			&\leq C\frac{\lambda rW_0}{\alpha}\,r^2\cdot\frac{\mathbb D(r)}{\lambda r^2}
			=C\frac{W_0r}{\alpha}\mathbb D(r).
			\label{eq:L2-W-gradw2-est}
		\end{align}
		For $\partial_iw\partial_iV^j\partial_ju$, use \eqref{eq:prod-a} with the coefficient $\lambda rV_1/\alpha$:
		\begin{align}			\frac{\lambda r}{\alpha}\int_{B_r}|\nabla w|\,|\nabla V|\,|\nabla u|\omega_r^{\alpha+1}\dd
			&\leq C\frac{\lambda rV_1}{\alpha}\cdot\frac{r}{\sqrt\lambda}\mathbb D(r)
			=C\frac{\sqrt\lambda V_1r^2}{\alpha}\mathbb D(r).
			\label{eq:L2-gradw-gradV-gradu-est}
		\end{align}
		Finally, \eqref{eq:prod-e} gives
		\begin{align*}			\frac{\lambda r}{\alpha}\int_{B_r}|\nabla w|\,|V|\,|D^2u|\omega_r^{\alpha+1}\dd
			&\leq C\sqrt\lambda V_0r\mathbb D(r)+C\frac{V_0r^2}{\alpha}\sqrt{\mathbb H(r)\mathbb D(r)}.
		\end{align*}
		Young's inequality gives
		\begin{equation}			C\frac{V_0r^2}{\alpha}\sqrt{\mathbb H(r)\mathbb D(r)}
			\leq C\sqrt\lambda V_0r\mathbb D(r)+C\frac{V_0r^3}{\alpha^2\sqrt\lambda}\mathbb H(r).
			\label{eq:final-young-inline}
		\end{equation}
		Consequently
		\begin{equation}			\frac{\lambda r}{\alpha}\int_{B_r}|\nabla w|\,|V|\,|D^2u|\omega_r^{\alpha+1}\dd
			\leq C\sqrt\lambda V_0r\mathbb D(r)+C\frac{V_0r^3}{\alpha^2\sqrt\lambda}\mathbb H(r).
			\label{eq:L2-gradw-V-Hessian-est}
		\end{equation}
		
		Combining \eqref{eq:L2-Deltaw2-est}--\eqref{eq:L2-gradw-V-Hessian-est} with \eqref{eq:L2-prime-main} gives
		\begin{align}			\mathbb L_2'(r)
			&\geq \frac{2\alpha+n+2}{r}\mathbb L_2(r)\notag\\
			&\quad-C\Bigg[
			\frac{\lambda V_0^2r^3}{\alpha}
			+\frac{W_0V_0\sqrt\lambda\,r^3}{\alpha}
			+\frac{W_1r^2}{\alpha}
			+\frac{\sqrt\lambda V_2r^3}{\alpha}
			\notag\\
			&\qquad\quad+\sqrt\lambda V_1r^2
			+\frac{W_0r}{\alpha}
			+\frac{\sqrt\lambda V_1r^2}{\alpha}
			+\sqrt\lambda V_0r
			\Bigg]\mathbb D(r)\notag\\
			&\quad-C\Bigg[
			\frac{W_0^2r^3}{\alpha}
			+\frac{W_0V_0\sqrt\lambda\,r^3}{\alpha}
			+\frac{W_1r^2}{\alpha}
			+\frac{W_2r^3}{\alpha}
			+\frac{\sqrt\lambda V_2r^3}{\alpha}
			\notag\\
			&\qquad\quad+\sqrt\lambda V_1r^2
			+\frac{V_1r^3}{\alpha}
			+V_0r^2
			+\frac{V_0r^3}{\alpha^2\sqrt\lambda}
			\Bigg]\mathbb H(r).
			\label{eq:L2-long-collected-bound}
		\end{align}
		Since $\alpha\geq2$, the definitions \eqref{eq:G-def} and \eqref{eq:E-def} bound the two brackets in \eqref{eq:L2-long-collected-bound} by $C\cG(r)$ and $C\cE(r)$, respectively. In particular, the mixed terms in the second bracket are included through $\cG(r)$ in \eqref{eq:E-def}.
		
		It follows that
		\begin{equation}			\mathbb L_2'\geq \frac{2\alpha+n+2}{r}\mathbb L_2-C\cG(r)\mathbb D(r)-C\cE(r)\mathbb H(r).
			\label{eq:L2-prime-lower-main}
		\end{equation}
		Adding \eqref{eq:L1-prime-lower-main} and \eqref{eq:L2-prime-lower-main} yields
		\begin{equation}			\mathbb L'\geq \frac{2\alpha+n}{r}\mathbb L+
			\frac{2}{r}\mathbb L_2-C\cG(r)\mathbb D-C\cE(r)\mathbb H.
			\label{eq:Lprime-lower-main}
		\end{equation}
		We also need $\mathbb L_2$ itself.  From $\Delta w=-Ww-V\cdot\nabla u$,
		\begin{align}			|\mathbb L_2|
			&\leq \lambda r^2W_0\int_{B_r}w^2\omega_r^\alpha\dd
			+\lambda r^2V_0\int_{B_r}|w|\,|\nabla u|\omega_r^\alpha\dd\notag\\
			&\leq W_0r^2\mathbb H(r)
			+\lambda r^2V_0
			\left(\int_{B_r}w^2\omega_r^{\alpha-1}\dd\right)^{1/2}
			\left(\int_{B_r}|\nabla u|^2\omega_r^{\alpha+1}\dd\right)^{1/2}\notag\\
			&\leq W_0r^2\mathbb H(r)+C V_0\left(\alpha\sqrt\lambda\,r^2+\lambda^{1/4}r^{5/2}\right)\mathbb H(r)\notag\\
			&\leq \frac14B(r)\mathbb H(r),
			\label{eq:L2-by-BH}
		\end{align}
		
		Since $\mathbb N=(\mathbb D+\mathbb L)/\mathbb H$, the quotient rule gives
		\begin{equation*}			\mathbb N'=\frac{(\mathbb D'+\mathbb L')\mathbb H-(\mathbb D+\mathbb L)\mathbb H'}{\mathbb H^2}=
			\frac{\mathbb D'\mathbb H-\mathbb D\mathbb H'}{\mathbb H^2}+\frac{\mathbb L'\mathbb H-\mathbb L\mathbb H'}{\mathbb H^2}.
		\end{equation*}
		By \eqref{eq:H-prime},
		\begin{align}			\frac{\mathbb L'\mathbb H-\mathbb L\mathbb H'}{\mathbb H^2}
			&=\frac{1}{\mathbb H}\left(\mathbb L'-\frac{2\alpha+n-2}{r}\mathbb L\right)-\frac{\mathbb L}{\mathbb H}\frac{\mathbb N}{\alpha r}.
			\label{eq:Lquot-start-main}
		\end{align}
		Using \eqref{eq:Lprime-lower-main}, \eqref{eq:L-by-BH}, and \eqref{eq:L2-by-BH},
		\begin{align}			\frac{1}{\mathbb H}\left(\mathbb L'-\frac{2\alpha+n-2}{r}\mathbb L\right)
			&\geq \frac{2}{r}\frac{\mathbb L+\mathbb L_2}{\mathbb H}-C\cG(r)\frac{\mathbb D}{\mathbb H}-C\cE(r)\notag\\
			&\geq -C\frac{B(r)}{r}-C\cG(r)\frac{\mathbb D}{\mathbb H}-C\cE(r).
			\label{eq:Lquot-middle-main}
		\end{align}
		Moreover,
		\begin{equation}			\frac{\mathbb D}{\mathbb H}=\mathbb N-\frac{\mathbb L}{\mathbb H}\leq \mathbb N+B(r),
			\label{eq:DoverH-for-L}
		\end{equation}
		and
		\begin{align}			-\frac{\mathbb L}{\mathbb H}\frac{\mathbb N}{\alpha r}
			&\geq -\frac{|\mathbb L|}{\mathbb H}\frac{|\mathbb N|}{\alpha r}
			\geq -\frac{B(r)}{\alpha r}\bigl(\mathbb N(r)+2B(r)\bigr),
			\label{eq:LN-term-main}
		\end{align}
		where the last step uses Lemma~\ref{lem:true-frequency-lower-bound},
		specifically \eqref{eq:true-frequency-lower-bound}.
		
		Suppose first that $\mathbb N(r)\geq0$. Substituting
		\eqref{eq:DoverH-for-L} and \eqref{eq:LN-term-main} into
		\eqref{eq:Lquot-start-main}--\eqref{eq:Lquot-middle-main}, we obtain
		\begin{align*}			\frac{\mathbb L'\mathbb H-\mathbb L\mathbb H'}{\mathbb H^2}
			&\geq -C\left(\cG(r)+\frac{B(r)}{\alpha r}\right)\mathbb N(r)\\
			&\quad-C\left[\cE(r)+\left(\cG(r)+\frac{2}{r}\right)B(r)+\frac{B(r)^2}{\alpha r}\right].
		\end{align*}
		Combining this with \eqref{eq:D-part-final} and enlarging the
		dimensional constant $C_0$ in \eqref{eq:Astar-def} and
		\eqref{eq:Pstar-def} gives $\mathbb N'+A\mathbb N\geq-P$ in this case.
		
		Assume now that $\mathbb N(r)<0$. We enlarge the same constant $C_0$, if
		necessary, so that the constants in \eqref{eq:Astar-def} and
		\eqref{eq:Pstar-def} apply in the calculation below. Using \eqref{eq:true-frequency-strong-bounds} in
		\eqref{eq:D-part-negative} and \eqref{eq:Lquot-start-main}, together
		with \eqref{eq:Lprime-lower-main}, yields
		\begin{equation*}
			\mathbb N'\geq-C\cG\left(\mathbb N+\frac{B}{4}\right)
			-\frac{B}{4\alpha r}\left(\mathbb N+\frac{B}{2}\right)
			-C\cE-\frac{B}{r},
		\end{equation*}
		where $C$ depends only on $n$. Taking $C_0\geq C$ and using $\mathbb N\geq-B/4$ once more, we obtain
		\begin{align*}
			\mathbb N'+A\mathbb N
			&\geq-C\cE-\frac{C_0}{4}\cG B-\frac{B}{r}
			-\left(\frac{C_0}{4}+\frac{1}{16}\right)\frac{B^2}{\alpha r}\\
			&\geq-P.
		\end{align*}
		Thus \eqref{eq:true-frequency-differential-ineq} holds in both
		cases, which proves the proposition.
	\end{proof}
	
	\begin{corollary}
		\label{cor:true-frequency-almost-monotone}
		For every fixed $r_0\in(0,R)$, the quantity
		\begin{equation}			\mathcal N(r):=
			e^{\int_{r_0}^rA(t)\,dt}\mathbb N(r)
			+\int_{r_0}^r e^{\int_{r_0}^sA(t)\,dt}P(s)\,ds
			\label{eq:monotone-frequency}
		\end{equation}
		is nondecreasing on every subinterval of $(r_0,R)$ on which $\mathbb H>0$.
	\end{corollary}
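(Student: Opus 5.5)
The plan is to differentiate $\mathcal N$ directly and reduce to Proposition~\ref{prop:true-frequency-differential-ineq}; this is the standard integrating-factor argument. Fix $r_0$ and let $I\subset(r_0,R)$ be an interval on which $\mathbb H>0$. First I check that $\mathbb H$, $\mathbb D$, $\mathbb L$ are $C^1$ in $r$ on $(0,R)$. This holds because $u$ is smooth and the weights $\omega_r^{\gamma}$ vanish on $\partial B_r$ with $\gamma\geq\alpha-1\geq1$, so Lemma~\ref{lem:weighted-differentiation} applies. Consequently $\mathbb N$ is $C^1$ on $I$.

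Next I check that $A$ and $P$ are continuous on $(0,R)$. This is clear from \eqref{eq:Bstar-def}--\eqref{eq:Pstar-def}, since they are finite sums of powers of $r$ (including $r^{-1}$, which is harmless away from $0$) with nonnegative coefficients. Hence $E(r):=e^{\int_{r_0}^rA(t)\,dt}$ is $C^1$ on $(r_0,R)$ with $E'=AE$, and the integral term in \eqref{eq:monotone-frequency} has derivative $E(r)P(r)$ by the fundamental theorem of calculus.

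Then, for $r\in I$,
\[
\mathcal N'(r)=E(r)\bigl(\mathbb N'(r)+A(r)\mathbb N(r)+P(r)\bigr)\geq0
\]
by \eqref{eq:true-frequency-differential-ineq}, since $E>0$. The mean value theorem then gives that $\mathcal N$ is nondecreasing on $I$.

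The only point needing care is the differentiability of $\mathbb N$, i.e.\ the justification that the formulas in Lemma~\ref{lem:H-D-prime} hold pointwise for every $r$ in $I$. This rests on Lemma~\ref{lem:weighted-differentiation} together with the smoothness of $u$, and the positivity of $\mathbb H$ on $I$ keeps the quotient well defined. I do not expect a genuine obstacle here.
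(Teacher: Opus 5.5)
Your proof is correct and matches the paper's: differentiate $\mathcal N$, factor out the positive integrating factor to get $\mathcal N'(r)=e^{\int_{r_0}^rA}\bigl(\mathbb N'+A\mathbb N+P\bigr)\geq0$, and apply Proposition~\ref{prop:true-frequency-differential-ineq}. The regularity checks you add (continuity of $A$ and $P$, and differentiability of $\mathbb N$ via Lemma~\ref{lem:weighted-differentiation}) are sound; the paper leaves them implicit.
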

	
	\begin{proof}
		Differentiating \eqref{eq:monotone-frequency} gives
		\begin{align*}			\mathcal N'(r)
			&=e^{\int_{r_0}^rA(t)\,dt}\bigl(\mathbb N'(r)+A(r)\mathbb N(r)\bigr)
			+e^{\int_{r_0}^rA(t)\,dt}P(r)\\
			&=e^{\int_{r_0}^rA(t)\,dt}\bigl(\mathbb N'(r)+A(r)\mathbb N(r)+P(r)\bigr)\geq0,
		\end{align*}
		by Proposition~\ref{prop:true-frequency-differential-ineq}. Hence
		$\mathcal N$ is nondecreasing.
	\end{proof}
	
	\begin{lemma}
		\label{lem:comparison-almost-monotone}
		Let $0<a<b<R$ and assume $\mathbb H>0$ on $[a,b]$.  Define
		\begin{equation*}			\Phi(s,t):=\int_s^tA(\rho)\,d\rho,
			\qquad 0<a\leq s\leq t\leq b.
		\end{equation*}
		Then
		\begin{equation}			\mathbb N(a)\leq e^{\Phi(a,b)}\mathbb N(b)+\int_a^b e^{\Phi(a,s)}P(s)\,ds,
			\label{eq:N-comparison-upper}
		\end{equation}
		and
		\begin{equation}			\mathbb N(b)\geq e^{-\Phi(a,b)}\mathbb N(a)-\int_a^b e^{-\Phi(s,b)}P(s)\,ds.
			\label{eq:N-comparison-lower}
		\end{equation}
		Moreover,
		\begin{equation}			\frac{\mathbb H'(r)}{\mathbb H(r)}=\frac{2\alpha+n-2}{r}+\frac{\mathbb N(r)}{\alpha r}
			\label{eq:log-H-true-frequency}
		\end{equation}
		for every $r\in(a,b)$.
	\end{lemma}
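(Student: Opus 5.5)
The plan is to deduce all three assertions directly from Proposition~\ref{prop:true-frequency-differential-ineq} and Lemma~\ref{lem:H-D-prime}, using a standard integrating-factor argument.

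Fix $[a,b]$ with $\mathbb H>0$ there. Then $\mathbb N$ is differentiable on $[a,b]$, and $A,P$ are continuous and nonnegative. The key step is to repeat the argument of Corollary~\ref{cor:true-frequency-almost-monotone} with base point $a$. Set
\[
\widetilde{\mathcal N}(r):=e^{\Phi(a,r)}\mathbb N(r)+\int_a^r e^{\Phi(a,s)}P(s)\,ds,\qquad r\in[a,b].
\]
Differentiating gives
\[
\widetilde{\mathcal N}'(r)=e^{\Phi(a,r)}\bigl(\mathbb N'(r)+A(r)\mathbb N(r)+P(r)\bigr)\geq0
\]
by \eqref{eq:true-frequency-differential-ineq}. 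Hence $\widetilde{\mathcal N}(a)\leq\widetilde{\mathcal N}(b)$. Since $\Phi(a,a)=0$, this is
\[
\mathbb N(a)\leq e^{\Phi(a,b)}\mathbb N(b)+\int_a^b e^{\Phi(a,s)}P(s)\,ds,
\]
which is \eqref{eq:N-comparison-upper}.

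For \eqref{eq:N-comparison-lower}, multiply the inequality just obtained by $e^{-\Phi(a,b)}>0$ and use the additivity $\Phi(a,s)-\Phi(a,b)=-\Phi(s,b)$. This gives
\[
e^{-\Phi(a,b)}\mathbb N(a)\leq\mathbb N(b)+\int_a^b e^{-\Phi(s,b)}P(s)\,ds,
\]
and rearranging yields \eqref{eq:N-comparison-lower}. Alternatively, one can differentiate $e^{-\Phi(r,b)}\mathbb N(r)+\int_a^r e^{-\Phi(s,b)}P(s)\,ds$ directly; it is nondecreasing for the same reason.

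For \eqref{eq:log-H-true-frequency}, divide \eqref{eq:H-prime} by $\mathbb H(r)>0$ and use the definition $\mathbb N=(\mathbb D+\mathbb L)/\mathbb H$. This gives the identity immediately.

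There is no real obstacle. The only point needing care is regularity: $\mathbb N$ must be differentiable so that the fundamental theorem of calculus applies. This holds because $u$ is smooth, so $\mathbb H$, $\mathbb D$ and $\mathbb L$ are $C^1$ in $r$, and $\mathbb H$ stays positive on the compact interval $[a,b]$. Continuity of $A$ and $P$ in $r$ is evident from \eqref{eq:Astar-def} and \eqref{eq:Pstar-def}, since $r\geq a>0$.
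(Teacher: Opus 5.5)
Your proposal is correct and follows essentially the same route as the paper: the integrating factor $e^{\Phi(a,r)}$ applied to Proposition~\ref{prop:true-frequency-differential-ineq} and integrated over $[a,b]$ gives \eqref{eq:N-comparison-upper}. Multiplying by $e^{-\Phi(a,b)}$ and using additivity of $\Phi$ then gives \eqref{eq:N-comparison-lower}, and the identity \eqref{eq:log-H-true-frequency} comes from dividing \eqref{eq:H-prime} by $\mathbb H$.
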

	
	\begin{proof}
		The identity \eqref{eq:log-H-true-frequency} follows from
		\eqref{eq:H-prime} and $\mathbb N=(\mathbb D+\mathbb L)/\mathbb H$. By
		Proposition~\ref{prop:true-frequency-differential-ineq}, multiplying
		\eqref{eq:true-frequency-differential-ineq} by the integrating factor gives
		\begin{equation*}			\frac{d}{dr}\left(e^{\Phi(a,r)}\mathbb N(r)\right)
			=e^{\Phi(a,r)}\bigl(\mathbb N'(r)+A(r)\mathbb N(r)\bigr)
			\geq-e^{\Phi(a,r)}P(r).
		\end{equation*}
		Integration from $a$ to $b$ yields
		\[
		e^{\Phi(a,b)}\mathbb N(b)-\mathbb N(a)\geq-\int_a^b e^{\Phi(a,s)}P(s)\,ds,
		\]
		which is \eqref{eq:N-comparison-upper}. Multiplying by $e^{-\Phi(a,b)}$ and using $\Phi(a,b)-\Phi(a,s)=\Phi(s,b)$ gives \eqref{eq:N-comparison-lower}.
	\end{proof}
	
	By Corollary~\ref{cor:true-frequency-almost-monotone} and the comparison
	formulas above, we obtain a weighted three-ball inequality for $\mathbb H(r)$.
	Let $0<R_0<R$ be fixed. In what follows,
	the coefficient bounds and the functions $A,B,P$ are evaluated on
	$B_{R_0}$. Assume that $\mathbb M,\mathbb G\geq1$ and
	\begin{align*}		\|W\|_{C^2(B_{R_0})}&\leq \mathbb M,\\
		\|V\|_{C^2(B_{R_0})}&\leq \mathbb G.
	\end{align*}
	Unless otherwise stated, constants depend only on $n$ and $R_0$.
	
	We introduce the following quantities:
	\begin{align}		c_1&:=(2\alpha+n-2)\log\frac{2r_2}{r_1},&
		c_2&:=(2\alpha+n-2)\log\frac{r_3}{2r_2},\notag
		\\
		\beta_1&:=\frac1\alpha\int_{r_1}^{2r_2}
		e^{\Phi(s,2r_2)}\frac{ds}{s},&
		\gamma_1&:=\frac1\alpha\int_{2r_2}^{r_3}
		e^{-\Phi(2r_2,s)}\frac{ds}{s},
		\label{eq:beta-gamma-corrected}
	\end{align}
	and
	\begin{align}		R_1&:=\frac1\alpha\int_{r_1}^{2r_2}
		\left(\int_s^{2r_2}e^{\Phi(s,t)}P(t)\,dt\right)\frac{ds}{s},
		\label{eq:R1-free}\\
		R_2&:=\frac1\alpha\int_{2r_2}^{r_3}
		\left(\int_{2r_2}^{s}e^{-\Phi(t,s)}P(t)\,dt\right)\frac{ds}{s}.
		\label{eq:R2-free}
	\end{align}
	Let
	\begin{equation}\label{eq:kappa-corrected-H}
		\kappa:=\kappa(r_1,r_2,r_3):=\frac{\gamma_1}{\beta_1+\gamma_1},
	\end{equation}
	and define
	\begin{equation*}		\Lambda_{\mathbb H}:=\kappa c_1-(1-\kappa)c_2+\kappa R_1+(1-\kappa)R_2.
	\end{equation*}
	
	\begin{proposition}\label{prop:weighted-H-three-ball-corrected}
		Let
		\begin{equation*}			0<r_1<r_2<2r_2<r_3<R_0.
		\end{equation*}
		Then
		\begin{equation}\label{eq:H-three-ball-free}
			\mathbb H(2r_2)\leq e^{\Lambda_{\mathbb H}}\mathbb H(r_1)^\kappa \mathbb H(r_3)^{1-\kappa}.
		\end{equation}
	\end{proposition}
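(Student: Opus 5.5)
We integrate the logarithmic derivative identity \eqref{eq:log-H-true-frequency} of Lemma~\ref{lem:comparison-almost-monotone} over the two intervals $[r_1,2r_2]$ and $[2r_2,r_3]$. We then control $\mathbb N$ on each interval by its value at the middle point $2r_2$, using the comparison inequalities \eqref{eq:N-comparison-upper}--\eqref{eq:N-comparison-lower}. We may assume $\mathbb H>0$ on $[r_1,r_3]$. If $\mathbb H(r_1)=0$, then $u\equiv0$ near the origin, and the claimed inequality is trivial (by unique continuation, or directly if the right side is interpreted as zero). In general the zero set of $\mathbb H$ is an initial interval, so positivity at $r_1$ propagates to all larger radii.

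\emph{Inner interval.} Integrating \eqref{eq:log-H-true-frequency} from $r_1$ to $2r_2$ gives
\[
\log\frac{\mathbb H(2r_2)}{\mathbb H(r_1)}=c_1+\frac1\alpha\int_{r_1}^{2r_2}\mathbb N(s)\frac{ds}{s}.
\]
For each $s\in[r_1,2r_2]$ we apply \eqref{eq:N-comparison-upper} with $a=s$ and $b=2r_2$:
\[
\mathbb N(s)\le e^{\Phi(s,2r_2)}\mathbb N(2r_2)+\int_s^{2r_2}e^{\Phi(s,t)}P(t)\,dt .
\]
Substituting this bound yields
\[
\log\frac{\mathbb H(2r_2)}{\mathbb H(r_1)}\le c_1+\beta_1\mathbb N(2r_2)+R_1 .
\]

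\emph{Outer interval.} Integrating from $2r_2$ to $r_3$ gives
\[
\log\frac{\mathbb H(r_3)}{\mathbb H(2r_2)}=c_2+\frac1\alpha\int_{2r_2}^{r_3}\mathbb N(s)\frac{ds}{s}.
\]
For each $s\in[2r_2,r_3]$ we apply \eqref{eq:N-comparison-lower} with $a=2r_2$ and $b=s$:
\[
\mathbb N(s)\ge e^{-\Phi(2r_2,s)}\mathbb N(2r_2)-\int_{2r_2}^s e^{-\Phi(t,s)}P(t)\,dt .
\]
This yields
\[
\log\frac{\mathbb H(r_3)}{\mathbb H(2r_2)}\ge c_2+\gamma_1\mathbb N(2r_2)-R_2 .
\]

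\emph{Eliminating $\mathbb N(2r_2)$.} The quantities $\beta_1,\gamma_1$ are positive, since the integrands are positive. The value $\mathbb N(2r_2)$ has a sign we do not control, but the two inequalities point in opposite directions, so we can eliminate it without knowing its sign. From the outer inequality,
\[
\gamma_1\mathbb N(2r_2)\le\log\mathbb H(r_3)-\log\mathbb H(2r_2)-c_2+R_2 .
\]
We multiply the inner inequality by $\gamma_1$ and replace $\beta_1\gamma_1\mathbb N(2r_2)$ using $\beta_1$ times this bound. This gives
\[
\gamma_1\bigl(\log\mathbb H(2r_2)-\log\mathbb H(r_1)\bigr)\le\gamma_1(c_1+R_1)+\beta_1\bigl(\log\mathbb H(r_3)-\log\mathbb H(2r_2)-c_2+R_2\bigr).
\]
Dividing by $\beta_1+\gamma_1$ and writing $\kappa=\gamma_1/(\beta_1+\gamma_1)$, so that $1-\kappa=\beta_1/(\beta_1+\gamma_1)$, we obtain
\[
\log\mathbb H(2r_2)\le\kappa\log\mathbb H(r_1)+(1-\kappa)\log\mathbb H(r_3)+\kappa c_1-(1-\kappa)c_2+\kappa R_1+(1-\kappa)R_2 .
\]
The constant terms are exactly $\Lambda_{\mathbb H}$, so exponentiating gives \eqref{eq:H-three-ball-free}.

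The main point requiring care is the sign bookkeeping in the elimination step. Only the upper comparison is used on the inner interval and only the lower one on the outer interval, so that both bounds involve the same $\mathbb N(2r_2)$ with positive coefficients. A minor technical point is justifying that $\mathbb H$ is $C^1$, so that \eqref{eq:log-H-true-frequency} can be integrated, and that the constants are taken from the bounds on $B_{R_0}\supset B_{r_3}$. Both follow from the smoothness of $u$ and from the statement of Lemma~\ref{lem:comparison-almost-monotone}.
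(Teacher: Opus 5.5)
Your main argument is the paper's: you integrate \eqref{eq:log-H-true-frequency} over $[r_1,2r_2]$ and $[2r_2,r_3]$, use \eqref{eq:N-comparison-upper} on the inner interval and \eqref{eq:N-comparison-lower} on the outer one, then weight by $\gamma_1$ and $\beta_1$ to eliminate $\mathbb N(2r_2)$. Your bookkeeping leading to $\Lambda_{\mathbb H}$ is correct.

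The gap is in the degenerate case you dismiss at the start. If $\mathbb H(r_1)=0$, then, since $\kappa>0$, the right-hand side of \eqref{eq:H-three-ball-free} is zero. The inequality then asserts $\mathbb H(2r_2)=0$, i.e.\ that $u\equiv0$ on $B_{r_1}$ forces $u\equiv0$ on $B_{2r_2}$. That is a weak unique continuation statement, not a triviality. ``Interpreting the right side as zero'' is exactly what makes it nontrivial. Citing a qualitative unique continuation theorem for $\Delta^2+W\Delta+V\cdot\nabla$ would close the case, but that is an external input the paper does not use. You also need positivity of $\mathbb H$ on $[r_1,r_3]$ to take logarithms and to apply Lemma~\ref{lem:comparison-almost-monotone} on each $[s,2r_2]$ and $[2r_2,s]$.

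The paper closes this with the frequency bounds already in hand.
\begin{itemize}
\item If $u\equiv0$ in $B_{R_0}$, both sides of \eqref{eq:H-three-ball-free} vanish. Otherwise $\mathbb H$ is continuous and nondecreasing, and $\mathbb H(r)>0$ for $r$ near $R_0$.
\item Set $a:=\inf\{r\in(0,R_0):\mathbb H(r)>0\}<R_0$ and suppose $a>0$. Fix $b\in(a,R_0)$. For $s\in(a,b]$, \eqref{eq:N-comparison-upper} on $[s,b]$ bounds $\mathbb N(s)$ above uniformly, because $A$ and $P$ are bounded on $[a,b]$.
\item Then \eqref{eq:log-H-true-frequency} gives $\mathbb H'/\mathbb H\le C$ on $(a,b]$. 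Hence $\mathbb H(s)\ge e^{-C(b-s)}\mathbb H(b)$, which contradicts $\mathbb H(a)=0$ as $s\downarrow a$.
\end{itemize}
So $\mathbb H>0$ on all of $(0,R_0)$, and your argument applies verbatim. Add this step, or at least name the unique continuation theorem you are invoking, and the proof is complete.
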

	
	\begin{proof}
		The conclusion is immediate if $u\equiv0$ in $B_{R_0}$. Otherwise,
		direct differentiation gives
		\begin{align*}
			\mathbb H'(r)
			&=2\lambda r\int_{B_r}w^2\omega_r^{\alpha-1}\,dx
			+2(\alpha-1)r\int_{B_r}
			\bigl(u^2+\lambda r^2w^2\bigr)\omega_r^{\alpha-2}\,dx\geq0.
		\end{align*}
		Thus $\mathbb H$ is continuous and nondecreasing. Put
		\[
		a:=\inf\{r\in(0,R_0):\mathbb H(r)>0\}.
		\]
		If $a>0$, fix $b\in(a,R_0)$. By
		\eqref{eq:N-comparison-upper}, $\mathbb N(s)$ is bounded above on $(a,b]$,
		since $A$ and $P$ are bounded on $[a,b]$. The identity
		\eqref{eq:log-H-true-frequency} then gives $\mathbb H'(s)/\mathbb H(s)\leq C$.
		Integration yields $\mathbb H(s)\geq e^{-C(b-s)}\mathbb H(b)$, contradicting
		$\mathbb H(a)=0$ as $s\downarrow a$. Hence $a=0$ and $\mathbb H(r)>0$ for
		every $0<r<R_0$.
		
		Integrating \eqref{eq:log-H-true-frequency} from $r_1$ to $2r_2$ gives
		\begin{equation}\label{eq:lower-interval-log-H-corrected}
			\log\frac{\mathbb H(2r_2)}{\mathbb H(r_1)}
			=c_1+\frac1\alpha\int_{r_1}^{2r_2}\mathbb N(s)\frac{ds}{s}.
		\end{equation}
		For $r_1\leq s\leq2r_2$, \eqref{eq:N-comparison-upper} implies
		\[
		\mathbb N(s)\leq e^{\Phi(s,2r_2)}\mathbb N(2r_2)
		+\int_s^{2r_2}e^{\Phi(s,t)}P(t)\,dt.
		\]
		Substitution into \eqref{eq:lower-interval-log-H-corrected} yields
		\begin{equation}\label{eq:lower-interval-log-H-short-corrected}
			\log\frac{\mathbb H(2r_2)}{\mathbb H(r_1)}
			\leq c_1+\beta_1\mathbb N(2r_2)+R_1.
		\end{equation}
		Similarly, integrating on $[2r_2,r_3]$ and using
		\eqref{eq:N-comparison-lower}, we obtain
		\begin{equation}\label{eq:upper-interval-log-H-short-corrected}
			\log\frac{\mathbb H(r_3)}{\mathbb H(2r_2)}
			\geq c_2+\gamma_1\mathbb N(2r_2)-R_2.
		\end{equation}
		Multiply \eqref{eq:lower-interval-log-H-short-corrected} by $\gamma_1$
		and \eqref{eq:upper-interval-log-H-short-corrected} by $\beta_1$.
		Eliminating $\mathbb N(2r_2)$ gives
		\begin{align*}			(\beta_1+\gamma_1)\log \mathbb H(2r_2)
			&\leq \gamma_1\log \mathbb H(r_1)+\beta_1\log \mathbb H(r_3)\\
			&\quad+\gamma_1c_1-\beta_1c_2+\gamma_1R_1+\beta_1R_2.
		\end{align*}
		Division by $\beta_1+\gamma_1$ and exponentiation prove
		\eqref{eq:H-three-ball-free}.
	\end{proof}
	
	We now choose $\alpha$ and $\lambda$ to optimize the preceding three-ball
	inequality, and recall we have defined $\mathbb K$ in \eqref{eq:intro-K}.
	\begin{lemma}\label{lem:optimized-weighted-threeball}
		Let $0<r_1<r_2<2r_2<r_3<R_0$. The parameters $\alpha$ and
		$\lambda$ can be chosen so that
		\begin{equation}\label{eq:H-three-ball-corrected}
			\mathbb H(2r_2)\leq
			\exp\left(C\mathbb K\log\frac{r_3}{2r_2}\right)
			\mathbb H(r_1)^\kappa \mathbb H(r_3)^{1-\kappa},
		\end{equation}
		where $C=C(n,R_0)>0$ and $\kappa$ is given by
		\eqref{eq:kappa-corrected-H} for the parameters in
		\eqref{eq:alpha-lambda-optimized-new}.
	\end{lemma}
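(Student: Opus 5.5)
We choose $\alpha:=C_1\mathbb K$ (with $C_1=C_1(n,R_0)$ large) and $\lambda:=\mathbb K^{-2}$, so that $\sqrt\lambda=\mathbb K^{-1}$. These choices are the ones recorded in \eqref{eq:alpha-lambda-optimized-new}. We then start from Proposition~\ref{prop:weighted-H-three-ball-corrected} and bound each piece of $\Lambda_{\mathbb H}$.

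The first step is to show that, for $0<r<R_0$, one has
\[
\int_0^{R_0}A(t)\,dt\leq C
\qquad\text{and}\qquad
rP(r)\leq C\alpha\mathbb K.
\]
It is in fact simpler to show $P(r)\leq C\alpha\mathbb K$ on $(0,R_0)$ up to terms integrable against $ds/(\alpha s)$. We check these term by term from \eqref{eq:Bstar-def}--\eqref{eq:Pstar-def}, using $W_j\leq\mathbb M\leq \mathbb K^2$ and $V_j\leq\mathbb G\leq\mathbb K^3$.
\begin{itemize}
\item In $B(r)$: $r/\sqrt\lambda=r\mathbb K$; $W_0r^2\leq\mathbb K^2$; $V_0\alpha\sqrt\lambda r^2\leq C\mathbb K^3$; and $V_0\lambda^{1/4}r^{5/2}\leq\mathbb K^{5/2}$. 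Hence $B\leq C\mathbb K^3$, which is too large. So the choice must be refined to $\lambda=\mathbb K^{-4}$ or similar. I would rebalance so that $\sqrt\lambda V_0\lesssim\mathbb K$, i.e.\ $\sqrt\lambda\sim\mathbb K^{-2}$, whence $r/\sqrt\lambda\sim\mathbb K^2$.
\item In $\cG$: the terms $\sqrt\lambda V_0r$ and $\sqrt\lambda V_1r^2$ are then $O(\mathbb K)$, while $r/\sqrt\lambda\sim\mathbb K^2$.
\item In $A=C_0(\cG+B/(\alpha r))$: this now contains $r/\sqrt\lambda\sim\mathbb K^2$, which makes $\int A$ of size $\mathbb K^2$. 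This is the tension.
\end{itemize}
The real task is therefore to pick $\alpha=C\mathbb K^{a}$ and $\lambda=\mathbb K^{-b}$ so that $\int_0^{R_0}A\leq C$ is replaced by the weaker requirement $\Phi\leq C$ on the relevant ranges, with all $P$-contributions to $R_1,R_2$ at most $C\mathbb K\log(r_3/2r_2)+C\mathbb K$. I would first try $\lambda$ of order $1$ ($\lambda=1$) and $\alpha=C\mathbb K$:
\begin{itemize}
\item $\cG\lesssim 1+\mathbb K^3/\alpha+\mathbb G+\dots$; here the term $\sqrt\lambda V_1r^2\sim\mathbb G$ without a $1/\alpha$ factor is the dangerous one.
\item That term has to be absorbed as $\sqrt\lambda V_1r^2\leq C\mathbb K$, which forces $\sqrt\lambda\leq\mathbb K^{-2}$.
\item But then $r/\sqrt\lambda$ in $\cG$ is $\mathbb K^2$.
\end{itemize}
Since $\cG$ multiplies $\mathbb N$, only $\int\cG\,dt$ matters, so the condition I actually need is $\int_{r_1}^{r_3}A\leq C$ modulo a factor $e^{C}$. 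I expect this balancing of the exponents $a,b$ to be the main obstacle. My fallback is to accept $\Phi\leq C\mathbb K$-type growth and show that it only affects $\kappa$ and not the prefactor. This is consistent with the statement, where $\kappa$ depends on the parameters.

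Given these bounds, the rest is bookkeeping. Since $e^{\pm\Phi}\in[e^{-C},e^{C}]$ (or at least is uniformly comparable), $\beta_1$ and $\gamma_1$ are comparable to $\alpha^{-1}\log(2r_2/r_1)$ and $\alpha^{-1}\log(r_3/2r_2)$. The remainders satisfy
\[
R_1,\;R_2\;\leq\;\frac{C}{\alpha}\int\!\!\int P\,dt\,\frac{ds}{s}\;\leq\; C\mathbb K\log\frac{r_3}{2r_2}+C,
\]
the constant being absorbed using $R_0$. For the main term, the definition of $\kappa$ gives
\[
\kappa c_1-(1-\kappa)c_2=\frac{\gamma_1c_1-\beta_1c_2}{\beta_1+\gamma_1},
\]
and with $\Phi\approx0$ we have $\gamma_1c_1\approx\beta_1c_2$ up to a factor $e^{C}$. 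This yields a bound of order $\alpha\log(r_3/2r_2)$, i.e.\ $C\mathbb K\log(r_3/2r_2)$, which proves \eqref{eq:H-three-ball-corrected}.
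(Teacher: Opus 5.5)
\textbf{There is a genuine gap: your argument rests on $\Phi$ being bounded, which is false, and your remainder bound is also false.}

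Everything in your final bookkeeping assumes $\Phi\le C$ on $[r_1,r_3]$. That includes the comparability of $\beta_1,\gamma_1$ with $\alpha^{-1}\log\frac{2r_2}{r_1}$ and $\alpha^{-1}\log\frac{r_3}{2r_2}$, dropping the weights $e^{\pm\Phi}$ from $R_1,R_2$, and $\gamma_1c_1\approx\beta_1c_2$. No choice of parameters makes $\Phi$ bounded for radii of unit size, which the lemma allows. For every $\lambda$ one has $\cG(s)\ge s/\sqrt\lambda+\sqrt\lambda V_0s\ge 2V_0^{1/2}s$, so $\Phi(r_1,r_3)\ge C_0V_0^{1/2}(r_3^2-r_1^2)$. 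With the parameters that keep the errors of size $\mathbb K$, namely $\alpha=\lceil\mathbb K\rceil$, $\lambda=\mathbb K^{-4}$ in \eqref{eq:alpha-lambda-optimized-new} (not $\lambda=\mathbb K^{-2}$ as you first wrote), one even has $A\ge c\mathbb K+c\mathbb K^2s$. So the tension you ran into is real. Your fallback, that ``$\Phi$ only affects $\kappa$'', is the right instinct, but you give no mechanism for it.

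Your remainder bound fails independently of $\Phi$. Since $P\ge C_0\cE\ge C_0/(\alpha\sqrt\lambda)$ does not decay, you get $R_1\ge \frac{C_0}{\alpha^2\sqrt\lambda}\,r_2\log\frac{r_2}{r_1}$. This is unbounded as $r_1\to0$ with $r_2,r_3$ fixed, so it cannot be $\le C\mathbb K\log\frac{r_3}{2r_2}+C$. Also, an additive $+C$ can never be absorbed, because $\log\frac{r_3}{2r_2}$ may be arbitrarily small.

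\textbf{The paper's proof needs no bound on $\Phi$; it uses three observations you are missing.}

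(i) The main term uses only $A\ge0$. From $e^{\Phi}\ge1$ and $e^{-\Phi}\le1$ you get $\beta_1\ge\alpha^{-1}\log\frac{2r_2}{r_1}$ and $\gamma_1\le\alpha^{-1}\log\frac{r_3}{2r_2}$. Hence $\kappa c_1-(1-\kappa)c_2=\frac{2\alpha+n-2}{\beta_1+\gamma_1}\bigl(\gamma_1\log\frac{2r_2}{r_1}-\beta_1\log\frac{r_3}{2r_2}\bigr)\le0$. A large $\Phi$ only helps here.

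(ii) The error is dominated by $A$ itself: $P\le C\alpha F(\alpha,\lambda)A$, with $F$ from \eqref{eq:parameter-objective}. This follows from $A\ge c/(\alpha\sqrt\lambda)$, $A\ge c\cG$ and $A\ge cB/(\alpha s)$. Since $\partial_te^{-\Phi(t,s)}=A(t)e^{-\Phi(t,s)}$, the exponential weights telescope, giving $\int A(t)e^{-\Phi(t,s)}\,dt\le1$. This yields $R_2\le CF\log\frac{r_3}{2r_2}$ and, after exchanging the order of integration, $R_1\le C\alpha F\beta_1$.

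(iii) One bounds $\kappa R_1$ rather than $R_1$, using $\kappa\beta_1\le\gamma_1\le\alpha^{-1}\log\frac{r_3}{2r_2}$.

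Finally, with $\alpha=\lceil\mathbb K\rceil$ and $\lambda=\mathbb K^{-4}$, the bounds $\mathbb M\le\mathbb K^2$ and $\mathbb G\le\mathbb K^3$ give $F\le C\mathbb K$. Therefore $\Lambda_{\mathbb H}\le C\mathbb K\log\frac{r_3}{2r_2}$.
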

	
	\begin{proof}
		Let $\alpha\geq2$ and $0<\lambda\leq1$. Define
		\begin{align}			F(\alpha,\lambda):={}&
			\alpha+1+\frac1{\alpha\sqrt\lambda}+\frac{\mathbb M}{\alpha}
			+\frac{\mathbb M^2\sqrt\lambda}{\alpha}
			+\mathbb G\sqrt\lambda+\frac{\mathbb G}{\alpha^2}
			+\frac{\mathbb G^2\lambda^{3/2}}\alpha .
			\label{eq:parameter-objective}
		\end{align}
		Recall we have defined $A(r)$ in \eqref{eq:Astar-def} and $P(r)$ in \eqref{eq:Pstar-def}. 
		We first prove
		\begin{equation}\label{eq:parameter-dependent-error}
			P(s)\leq C\alpha F(\alpha,\lambda)A(s),
			\qquad 0<s<R_0.
		\end{equation}
		The definition of $A$ gives
		\[
		A(s)\geq\frac{c}{\alpha\sqrt\lambda},\qquad
		A(s)\geq c\mathcal G(s),\qquad
		A(s)\geq\frac{cB(s)}{\alpha s}.
		\]
		Consequently, the definition of $P$ implies
		\begin{equation*}			\frac{P(s)}{\alpha A(s)}
			\leq C\left(1+\frac{B(s)}\alpha+\sqrt\lambda\,\mathcal E(s)\right).
		\end{equation*}
		Since $W_j\leq C\mathbb M$ and $V_j\leq C\mathbb G$ for $j=0,1,2$,
		\begin{align*}			\frac{B(s)}\alpha
			&\leq C\left(\frac1{\alpha\sqrt\lambda}+\frac{\mathbb M}{\alpha}
			+\mathbb G\sqrt\lambda+\frac{\mathbb G\lambda^{1/4}}\alpha\right),\\
			\sqrt\lambda\,\mathcal E(s)
			&\leq C\bigg(1+\frac1{\alpha\sqrt\lambda}+\frac{\mathbb M}{\alpha}
			+\frac{\mathbb M^2\sqrt\lambda}\alpha+\mathbb G\sqrt\lambda
			+\frac{\mathbb G}{\alpha^2}\\
			&\hspace{24mm}
			+\frac{\mathbb G^2\lambda^{3/2}}\alpha+\frac{\mathbb M\mathbb G\lambda}\alpha\bigg).
		\end{align*}
		Here we used $s<R_0$, $\mathbb M,\mathbb G\geq1$, $\alpha\geq2$, and $\lambda\leq1$.
		The remaining mixed terms satisfy
		\begin{align*}			\frac{\mathbb G\lambda^{1/4}}\alpha
			&\leq\frac12\left(\mathbb G\sqrt\lambda+\frac{\mathbb G}{\alpha^2}\right),\\
			\frac{\mathbb M\mathbb G\lambda}\alpha
			&\leq\frac12\left(\frac{\mathbb M^2\sqrt\lambda}\alpha
			+\frac{\mathbb G^2\lambda^{3/2}}\alpha\right).
		\end{align*}
		This proves \eqref{eq:parameter-dependent-error}.
		
		We next estimate the terms in $\Lambda_{\mathbb H}$. Since $A\geq0$,
		\[
		\beta_1\geq\frac1\alpha\log\frac{2r_2}{r_1},
		\qquad
		\gamma_1\leq\frac1\alpha\log\frac{r_3}{2r_2}.
		\]
		It follows that
		\begin{equation}\label{eq:cancellation-no-r1-corrected}
			\kappa c_1-(1-\kappa)c_2
			=\frac{2\alpha+n-2}{\beta_1+\gamma_1}
			\left(\gamma_1\log\frac{2r_2}{r_1}
			-\beta_1\log\frac{r_3}{2r_2}\right)\leq0.
		\end{equation}
		Changing the order of integration in \eqref{eq:R1-free} and using
		$\Phi(s,t)=\Phi(s,2r_2)-\Phi(t,2r_2)$, we obtain
		\begin{align*}			R_1
			&=\frac1\alpha\int_{r_1}^{2r_2}P(t)e^{-\Phi(t,2r_2)}
			\left(\int_{r_1}^t e^{\Phi(s,2r_2)}\frac{ds}{s}\right)dt\\
			&\leq\beta_1\int_{r_1}^{2r_2}P(t)e^{-\Phi(t,2r_2)}\,dt\\
			&\leq C\alpha F(\alpha,\lambda)\beta_1
			\int_{r_1}^{2r_2}A(t)e^{-\Phi(t,2r_2)}\,dt\\
			&\leq C\alpha F(\alpha,\lambda)\beta_1.
		\end{align*}
		In the last step we used
		$\partial_t e^{-\Phi(t,2r_2)}=A(t)e^{-\Phi(t,2r_2)}$.
		Since $\kappa\beta_1\leq\gamma_1$,
		\begin{equation}\label{eq:kappa-R1-parameter-estimate}
			\kappa R_1\leq C F(\alpha,\lambda)\log\frac{r_3}{2r_2}.
		\end{equation}
		Similarly, changing the order of integration in \eqref{eq:R2-free} and
		using \eqref{eq:parameter-dependent-error},
		\begin{align}			R_2
			&=\frac1\alpha\int_{2r_2}^{r_3}P(t)
			\left(\int_t^{r_3}e^{-\Phi(t,s)}\frac{ds}{s}\right)dt\notag\\
			&\leq C F(\alpha,\lambda)
			\int_{2r_2}^{r_3}\left(\int_{2r_2}^s
			A(t)e^{-\Phi(t,s)}\,dt\right)\frac{ds}{s}\notag\\
			&=C F(\alpha,\lambda)
			\int_{2r_2}^{r_3}\left(1-e^{-\Phi(2r_2,s)}\right)\frac{ds}{s}\notag\\
			&\leq C F(\alpha,\lambda)\log\frac{r_3}{2r_2}.
			\label{eq:R2-parameter-estimate}
		\end{align}
		Combining \eqref{eq:cancellation-no-r1-corrected},
		\eqref{eq:kappa-R1-parameter-estimate}, and
		\eqref{eq:R2-parameter-estimate} gives the single exponent estimate
		\begin{equation}\label{eq:Lambda-H-parameter-bound}
			\Lambda_{\mathbb H}\leq C F(\alpha,\lambda)\log\frac{r_3}{2r_2}.
		\end{equation}
		
		It remains to choose the parameters. The lower bound of	\eqref{eq:parameter-objective} is determined by
		\[
		F(\alpha,\lambda)\geq1,
		\qquad
		F(\alpha,\lambda)\geq\alpha+\frac{\mathbb M}{\alpha}\geq2\mathbb M^{1/2},
		\qquad
		F(\alpha,\lambda)\geq\alpha+\frac{\mathbb G}{\alpha^2}\geq c\mathbb G^{1/3}.
		\]
		Thus $F(\alpha,\lambda)\geq c\mathbb K$. The terms
		$\alpha$ and $1/(\alpha\sqrt\lambda)$ are balanced when
		$\lambda$ has order $\alpha^{-4}$. Now, choose
		\begin{equation}\label{eq:alpha-lambda-optimized-new}
			\alpha:=\lceil \mathbb K\rceil,\qquad \lambda:=\mathbb K^{-4}.
		\end{equation}
		Since $\mathbb K\geq2$, this choice satisfies $\alpha\geq2$ and
		$0<\lambda\leq1$. Moreover,
		\begin{align}			F(\lceil \mathbb K\rceil,\mathbb K^{-4})
			&\leq C\left(\mathbb K+\frac{\mathbb M}{\mathbb K}+\frac{\mathbb M^2}{\mathbb K^3}
			+\frac{\mathbb G}{\mathbb K^2}+\frac{\mathbb G^2}{\mathbb K^7}\right)\notag\\
			&\leq C\mathbb K,
			\label{eq:optimized-F-bound}
		\end{align}
		because $\mathbb M\leq \mathbb K^2$ and $\mathbb G\leq \mathbb K^3$. Hence the chosen parameters
		minimize the bound up to a constant. Substitution into
		\eqref{eq:Lambda-H-parameter-bound} and then
		\eqref{eq:H-three-ball-free} proves
		\eqref{eq:H-three-ball-corrected}.
	\end{proof}
	
	For the parameters \eqref{eq:alpha-lambda-optimized-new}, the definitions
	give
	\begin{align}		B(s)&\leq C\left[\mathbb K^2s+\mathbb K^2s^2+
		\mathbb K^3\left(\alpha \mathbb K^{-2}s^2+\mathbb K^{-1}s^{5/2}\right)\right]
		\leq C\alpha \mathbb K s,\notag\\
		c\mathbb K\leq A(s)&\leq C(\mathbb K+\mathbb K^2s),\qquad
		P(s)\leq C\alpha \mathbb K A(s),\qquad 0<s<R_0.
		\label{eq:Astar-lower-upper-optimized}
	\end{align}
	Indeed, $A\geq c/(\alpha\sqrt\lambda)\geq c\mathbb K$. In $\mathcal G$,
	the terms $s/\sqrt\lambda$ and
	$W_0V_0\sqrt\lambda\,s^3/\alpha$ are bounded by $C\mathbb K^2s$, and all
	the remaining terms are bounded by $C(\mathbb K+\mathbb K^2s)$. Also
	$B/(\alpha s)\leq C\mathbb K$, while the bound for $P$ follows from
	\eqref{eq:parameter-dependent-error} and \eqref{eq:optimized-F-bound}.
	In particular,
	\begin{equation}\label{eq:natural-scale-A-P}
		c\mathbb K\leq A(s)\leq C\mathbb K,\qquad
		P(s)\leq C\alpha \mathbb K^2,\qquad
		0<s<R_0,\quad s\leq \mathbb K^{-1}.
	\end{equation}
	
	We show the optimized unweighted three-ball inequality for
	\begin{equation*}		h(r):=\int_{B_r}\left(u^2+\lambda r^2w^2\right)\dd.
	\end{equation*}
	
	\begin{lemma}\label{lem:h-three-ball-corrected}
		Let $0<r_1<r_2<2r_2<r_3<R_0$ and let $\kappa$ be given by
		\eqref{eq:kappa-corrected-H}, with the parameters chosen in
		\eqref{eq:alpha-lambda-optimized-new}. Then
		\begin{equation}\label{eq:h-three-ball-corrected}
			h(r_2)\leq
			\exp\left(C\mathbb K\log\frac{r_3}{r_2}\right)
			h(r_1)^\kappa h(r_3)^{1-\kappa}.
		\end{equation}
	\end{lemma}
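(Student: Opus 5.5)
The plan is to deduce \eqref{eq:h-three-ball-corrected} from the weighted inequality \eqref{eq:H-three-ball-corrected} of Lemma~\ref{lem:optimized-weighted-threeball}. We keep the same parameters \eqref{eq:alpha-lambda-optimized-new}, and hence the same $\kappa$. The argument needs only two pointwise comparisons between $\mathbb H$ and $h$, coming from the bounds $0\le\omega_r\le r^2$ and $\omega_r\geq r^2-\rho^2$ on $B_\rho$. The point to watch is that the factor $\lambda r^2$ inside both $\mathbb H(r)$ and $h(r)$ depends on the radius. It does so monotonically, so it causes no trouble.

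\textbf{Upper comparison.} Since $\omega_r^{\alpha-1}\le r^{2(\alpha-1)}$ on $B_r$, we have
\[
\mathbb H(r)\le r^{2(\alpha-1)}h(r)\qquad\text{for every } r.
\]
We apply this at $r=r_1$ and at $r=r_3$.

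\textbf{Lower comparison.} For $\rho<r$ we restrict the integral defining $\mathbb H(r)$ to $B_\rho$. On $B_\rho$ we have $\omega_r\ge r^2-\rho^2$, and also $\lambda r^2\ge\lambda\rho^2$. Therefore
\[
\mathbb H(r)\ge (r^2-\rho^2)^{\alpha-1}h(\rho).
\]
With $r=2r_2$ and $\rho=r_2$ this gives $\mathbb H(2r_2)\ge(3r_2^2)^{\alpha-1}h(r_2)$.

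\textbf{Inserting into the weighted inequality.} Substituting both comparisons into \eqref{eq:H-three-ball-corrected} yields
\[
h(r_2)\le \exp\Bigl(C\mathbb K\log\tfrac{r_3}{2r_2}+(\alpha-1)E\Bigr)h(r_1)^\kappa h(r_3)^{1-\kappa},
\]
where
\[
E=2\kappa\log r_1+2(1-\kappa)\log r_3-2\log r_2-\log3
=2\kappa\log\tfrac{r_1}{r_2}+2(1-\kappa)\log\tfrac{r_3}{r_2}-\log3 .
\]

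\textbf{Bounding the extra exponent.} Since $r_1<r_2$ and $0<\kappa<1$, the first term of $E$ is nonpositive. The term $-\log 3$ is also nonpositive. Hence
\[
E\le 2\log\frac{r_3}{r_2}.
\]
By \eqref{eq:alpha-lambda-optimized-new}, $\alpha-1\le\mathbb K$, so $(\alpha-1)E\le 2\mathbb K\log(r_3/r_2)$. Finally, $\log\frac{r_3}{2r_2}\le\log\frac{r_3}{r_2}$, so the total exponent is at most $C\mathbb K\log(r_3/r_2)$. This is exactly \eqref{eq:h-three-ball-corrected}.

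\textbf{Main obstacle.} The only delicate point is the bookkeeping of the factors $r^{2(\alpha-1)}$. They are exponentially large in $\alpha\sim\mathbb K$, and we must check that they combine into a quantity of size $\mathbb K\log(r_3/r_2)$ rather than $\mathbb K\log(1/r_1)$. This works because the $r_1$-contribution enters with a favorable sign, since $r_1<r_2$, and because the exponents sum consistently: $\kappa+(1-\kappa)=1$ matches the single power on the left-hand side. No further input beyond Lemma~\ref{lem:optimized-weighted-threeball} is needed.
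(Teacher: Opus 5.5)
Your proposal is correct and is essentially the paper's proof. You use the same two comparisons, $\mathbb H(r)\le r^{2\alpha-2}h(r)$ and $\mathbb H(2r_2)\ge(3r_2^2)^{\alpha-1}h(r_2)$, substitute them into the weighted three-ball inequality, and absorb the extra factor using $\kappa\log(r_1/r_2)+(1-\kappa)\log(r_3/r_2)\le\log(r_3/r_2)$ together with $\alpha-1\le\mathbb K$. The only cosmetic difference is that you start from the already optimized form \eqref{eq:H-three-ball-corrected}, whereas the paper starts from \eqref{eq:H-three-ball-free} and bounds $\Lambda_{\mathbb H}$ afterwards.
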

	
	\begin{proof}
		We first compare $\mathbb H$ and $h$ for arbitrary $\alpha\geq2$ and
		$\lambda>0$. Since
		$\omega_r\leq r^2$ in $B_r$ and $\omega_{2r_2}\geq3r_2^2$ in $B_{r_2}$,
		\begin{equation*}			\mathbb H(r)\leq r^{2\alpha-2}h(r),\qquad
			\mathbb H(2r_2)\geq(3r_2^2)^{\alpha-1}h(r_2).
		\end{equation*}
		Applying these inequalities to \eqref{eq:H-three-ball-free} yields
		\begin{align*}			h(r_2)
			&\leq(3r_2^2)^{1-\alpha}e^{\Lambda_{\mathbb H}}
			\mathbb H(r_1)^\kappa \mathbb H(r_3)^{1-\kappa}\\
			&\leq(3r_2^2)^{1-\alpha}e^{\Lambda_{\mathbb H}}
			\bigl(r_1^{2\alpha-2}h(r_1)\bigr)^\kappa
			\bigl(r_3^{2\alpha-2}h(r_3)\bigr)^{1-\kappa}\\
			&=e^{\Lambda_h}h(r_1)^\kappa h(r_3)^{1-\kappa},
		\end{align*}
		where
		\begin{align*}			\Lambda_h:={}&\Lambda_{\mathbb H}+(2\alpha-2)
			\bigl(\kappa\log r_1+(1-\kappa)\log r_3-\log r_2\bigr)
			+(1-\alpha)\log3.
		\end{align*}
		As $r_1<r_2<r_3$ and $0<\kappa<1$,
		\[
		\kappa\log r_1+(1-\kappa)\log r_3-\log r_2
		=\kappa\log\frac{r_1}{r_2}+(1-\kappa)\log\frac{r_3}{r_2}
		\leq\log\frac{r_3}{r_2}.
		\]
		The bounds \eqref{eq:Lambda-H-parameter-bound} and
		\eqref{eq:optimized-F-bound}, together with $\alpha\leq2\mathbb K$ and
		$(1-\alpha)\log3\leq0$, give
		\begin{equation*}			\Lambda_h\leq C\mathbb K\log\frac{r_3}{r_2},
		\end{equation*}
		proving \eqref{eq:h-three-ball-corrected}.
	\end{proof}
	
	We are ready to prove the $L^\infty$ three-ball inequality.
	We use the parameters in \eqref{eq:alpha-lambda-optimized-new}.
	For $0<\rho<R_0$ define
	\begin{equation}		\mathcal C(\rho):=1+\rho^2\|W\|_{L^\infty(B_\rho)}+\rho^3\|V\|_{L^\infty(B_\rho)}.
		\label{eq:C-rho-def}
	\end{equation}
	
	The required local $L^\infty$--$L^2$ and $W^{2,2}$--$L^\infty$
	estimates are established in Lemmas~\ref{lem:local-Linfty-L2-new}
	and~\ref{lem:local-W22-Linfty-new} in the Appendix. We apply them below
	to remove the Laplacian from the three-ball inequality for $h$.
	
	For $0<r<\rho<R_0$ define
	\begin{equation}		\mathcal B(r,\rho)
		:=r^{n/2}+\mathbb K^{-2}r\,\mathcal C(\rho)(\rho-r)^{-2}\rho^{n/2}.
		\label{eq:Bmu-def}
	\end{equation}
	
	\begin{samepage}
		\begin{proposition}
			\label{prop:Linfty-threeball-corrected}
			Assume
			\begin{equation*}				0<r_1<r_2<4r_2<r_3<R_0.
			\end{equation*}
			Let
			\begin{equation*}				\kappa_1:=\kappa(r_1,2r_2,r_3),
			\end{equation*}
			where $\kappa(r_1,r_2,r_3)$ is defined in \eqref{eq:kappa-corrected-H}.  If $r_1<\rho_1<2r_2$ and $r_3<\rho_3<R_0$, then
			\begin{align}				\|u\|_{L^\infty(B_{r_2})}
				&\leq C\mathcal C(2r_2)^{n/4} r_2^{-n/2}
				\exp\left(C\mathbb K\log\frac{r_3}{2r_2}\right)
				\notag\\
				&\quad\times
				\mathcal B(r_1,\rho_1)^{\kappa_1}
				\mathcal B(r_3,\rho_3)^{1-\kappa_1}
				\|u\|_{L^\infty(B_{\rho_1})}^{\kappa_1}
				\|u\|_{L^\infty(B_{\rho_3})}^{1-\kappa_1}.
				\label{eq:Linfty-threeball-corrected}
			\end{align}
		\end{proposition}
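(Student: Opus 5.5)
The plan is to apply Lemma~\ref{lem:h-three-ball-corrected} with the radii $(r_1,2r_2,r_3)$ (admissible since $r_1<2r_2<4r_2<r_3<R_0$). This gives
\[
h(2r_2)\leq \exp\Bigl(C\mathbb K\log\frac{r_3}{2r_2}\Bigr)h(r_1)^{\kappa_1}h(r_3)^{1-\kappa_1}.
\]
The remaining work is to convert the left side into an $L^\infty$ bound on $B_{r_2}$ and the right side into $L^\infty$ norms on $B_{\rho_1}$ and $B_{\rho_3}$. Throughout, the parameters are $\lambda=\mathbb K^{-4}$ and $\alpha=\lceil\mathbb K\rceil$.

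For the left side, $h(2r_2)\geq\int_{B_{2r_2}}u^2\dd$. Lemma~\ref{lem:local-Linfty-L2-new} is the local $L^\infty$--$L^2$ estimate for solutions of \eqref{eq:main-equation}. Its constant depends on the scale-invariant size of the coefficients, which is recorded in $\mathcal C(2r_2)$. I expect it to take the form
\[
\|u\|_{L^\infty(B_{r_2})}\leq C\,\mathcal C(2r_2)^{n/4}r_2^{-n/2}\Bigl(\int_{B_{2r_2}}u^2\dd\Bigr)^{1/2}.
\]
We therefore pass to square roots and work with $h^{1/2}$. The factor $\exp(C\mathbb K\log\frac{r_3}{2r_2})$ keeps the same form after the constant $C$ is adjusted.

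For the right side, take $r=r_1,\rho=\rho_1$ or $r=r_3,\rho=\rho_3$. Since $\lambda r^2=\mathbb K^{-4}r^2$,
\[
h(r)^{1/2}\leq |B_r|^{1/2}\|u\|_{L^\infty(B_r)}+\mathbb K^{-2}r\|w\|_{L^2(B_r)}.
\]
The second term is handled by the $W^{2,2}$--$L^\infty$ estimate of Lemma~\ref{lem:local-W22-Linfty-new}. Applied on the pair $B_r\subset B_\rho$, it should give
\[
\|\Delta u\|_{L^2(B_r)}\leq C\,\mathcal C(\rho)(\rho-r)^{-2}\rho^{n/2}\|u\|_{L^\infty(B_\rho)}.
\]
Combining the two pieces gives $h(r)^{1/2}\leq C\mathcal B(r,\rho)\|u\|_{L^\infty(B_\rho)}$, with $\mathcal B$ as in \eqref{eq:Bmu-def}. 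Raising these bounds to the powers $\kappa_1$ and $1-\kappa_1$ and multiplying yields \eqref{eq:Linfty-threeball-corrected}. The constants $C^{\kappa_1}C^{1-\kappa_1}=C$ are harmless.

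The main obstacle is matching the precise forms of the two Appendix lemmas to the factors $\mathcal C(2r_2)^{n/4}r_2^{-n/2}$ and $\mathcal B$. In particular, I need to confirm that the $L^\infty$--$L^2$ estimate involves only $\mathcal C$ at scale $2r_2$, with power $n/4$. The natural route is rescaling to the unit ball: the rescaled equation has coefficients $\rho^2W$ and $\rho^3V$. Standard local boundedness for the system $\Delta u=w$, $\Delta w=-Ww-V\cdot\nabla u$ then gives a constant polynomial in $\mathcal C$. I would take the exponents exactly as the lemmas provide them. If those exponents differ, only the prefactor changes, not the structure of the argument.
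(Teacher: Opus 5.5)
Your proposal is correct and is essentially the paper's proof: apply Lemma~\ref{lem:h-three-ball-corrected} with middle radius $2r_2$, bound $\|u\|_{L^\infty(B_{r_2})}$ by $h(2r_2)^{1/2}$ via Lemma~\ref{lem:local-Linfty-L2-new} with $(r,\rho)=(r_2,2r_2)$ (so $(\rho-r)^{-n/2}=r_2^{-n/2}$ exactly), and bound the endpoints by $h(r)^{1/2}\leq\|u\|_{L^2(B_r)}+\mathbb K^{-2}r\|\Delta u\|_{L^2(B_r)}\leq C\mathcal B(r,\rho)\|u\|_{L^\infty(B_\rho)}$ using Lemma~\ref{lem:local-W22-Linfty-new}. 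The appendix lemmas are stated in precisely the forms you anticipated, so no adjustment of the prefactors is needed.
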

	\end{samepage}
	
	\begin{proof}
		By Lemma~\ref{lem:local-Linfty-L2-new} and the definition of $h$,
		\begin{equation*}			\|u\|_{L^\infty(B_{r_2})}
			\leq C\mathcal C(2r_2)^{n/4}r_2^{-n/2}
			\|u\|_{L^2(B_{2r_2})}
			\leq C\mathcal C(2r_2)^{n/4}r_2^{-n/2}h(2r_2)^{1/2}.
		\end{equation*}
		Applying Lemma~\ref{lem:h-three-ball-corrected} with middle
		radius $2r_2$, we obtain
		\begin{align}			\|u\|_{L^\infty(B_{r_2})}
			&\leq C\mathcal C(2r_2)^{n/4}r_2^{-n/2}
			\exp\left(C\mathbb K\log\frac{r_3}{2r_2}\right)
			h(r_1)^{\kappa_1/2}h(r_3)^{(1-\kappa_1)/2}.
			\label{eq:Linfty-middle-before-endpoints}
		\end{align}
		For $0<r<\rho<R_0$, Lemma~\ref{lem:local-W22-Linfty-new} gives
		\begin{align*}			h(r)^{1/2}
			&\leq \|u\|_{L^2(B_r)}+\mathbb K^{-2}r\|\Delta u\|_{L^2(B_r)}\\
			&\leq C\left[r^{n/2}
			+\mathbb K^{-2}r\mathcal C(\rho)(\rho-r)^{-2}\rho^{n/2}\right]
			\|u\|_{L^\infty(B_\rho)}\\
			&=C\mathcal B(r,\rho)\|u\|_{L^\infty(B_\rho)}.
		\end{align*}
		Substituting this inequality at $(r_1,\rho_1)$ and $(r_3,\rho_3)$
		into \eqref{eq:Linfty-middle-before-endpoints} proves
		\eqref{eq:Linfty-threeball-corrected}.
	\end{proof}
	
	We prove Theorem~\ref{thm:main-threeball} by applying the preceding
	proposition at fixed radii after rescaling. This uses the same parameter
	choice \eqref{eq:alpha-lambda-optimized-new}; the coefficient bounds change
	under the rescaling.
	
	\begin{proof}[Proof of Theorem~\ref{thm:main-threeball}]
		Fix $0<r\leq R/40$ and, for $y\in B_{40}$, define
		\[
		\widetilde u(y):=u(ry),\qquad
		\widetilde W(y):=r^2W(ry),\qquad
		\widetilde V(y):=r^3V(ry).
		\]
		Then
		\begin{equation}\label{eq:scaled-main-equation}
			\Delta^2\widetilde u+\widetilde W\Delta\widetilde u
			+\widetilde V\cdot\nabla\widetilde u=0
			\qquad\text{in }B_{40}.
		\end{equation}
		Set
		\begin{equation*}			\mathbb K_r:=2+
			\max\left\{1,\|\widetilde W\|_{C^2(B_{35})}\right\}^{1/2}
			+\max\left\{1,\|\widetilde V\|_{C^2(B_{35})}\right\}^{1/3}.
		\end{equation*}
		The definitions of $\widetilde W$ and $\widetilde V$ give
		\begin{align*}			\|\widetilde W\|_{C^2(B_{35})}
			&\leq (r^2+r^3+r^4)\mathbb M\leq C(n,R)r^2\mathbb M,\\
			\|\widetilde V\|_{C^2(B_{35})}
			&\leq (r^3+r^4+r^5)\mathbb G\leq C(n,R)r^3\mathbb G,
		\end{align*}
		and hence
		\begin{equation}\label{eq:scaled-main-K-bound}
			\mathbb K_r\leq C(n,R)\left(1+r\mathbb M^{1/2}+r\mathbb G^{1/3}\right)
			\leq C(n,R)\mathbb K.
		\end{equation}
		
		Apply Proposition~\ref{prop:Linfty-threeball-corrected} to
		\eqref{eq:scaled-main-equation} with $R_0=35$ and
		\[
		r_1=\frac12,\qquad r_2=3,\qquad r_3=15,
		\qquad \rho_1=1,\qquad \rho_3=30.
		\]
		Here its parameters are $\alpha=\lceil \mathbb K_r\rceil$ and
		$\lambda=\mathbb K_r^{-4}$. In \eqref{eq:C-rho-def} and
		\eqref{eq:Bmu-def}, the coefficient bounds and $\mathbb K$ are therefore
		replaced by those of the scaled equation and by $\mathbb K_r$. Since
		\[
		\max\left\{1,\|\widetilde W\|_{C^2(B_{35})}\right\}\leq \mathbb K_r^2,
		\qquad
		\max\left\{1,\|\widetilde V\|_{C^2(B_{35})}\right\}\leq \mathbb K_r^3,
		\]
		the fixed radii give
		\[
		\mathcal C(6)+\mathcal C(30)\leq C\mathbb K_r^3,
		\qquad
		\mathcal B(1/2,1)+\mathcal B(15,30)\leq C\mathbb K_r^C.
		\]
		The polynomial factors are absorbed into the exponential, so the
		proposition gives
		\begin{equation}\label{eq:scaled-main-threeball}
			\|\widetilde u\|_{L^\infty(B_3)}
			\leq e^{C\mathbb K_r}
			\|\widetilde u\|_{L^\infty(B_1)}^{\kappa_r}
			\|\widetilde u\|_{L^\infty(B_{30})}^{1-\kappa_r},
		\end{equation}
		where $\kappa_r=\kappa(1/2,6,15)$ is computed for the scaled
		coefficients and the parameters above.
		
		By \eqref{eq:Astar-lower-upper-optimized},
		$A(s)\leq C(\mathbb K_r+\mathbb K_r^2s)$ for the scaled equation. Thus
		\[
		0\leq\Phi(s,t)\leq C\mathbb K_r^2,
		\qquad \frac12\leq s\leq t\leq15.
		\]
		The definitions \eqref{eq:beta-gamma-corrected} therefore yield
		\begin{align*}
			\beta_1
			&\leq\frac{e^{C\mathbb K_r^2}}\alpha\log\frac{12}{1/2}
			=\frac{e^{C\mathbb K_r^2}}\alpha\log24,\\
			\gamma_1
			&\geq\frac{e^{-C\mathbb K_r^2}}\alpha\log\frac{15}{12}
			=\frac{e^{-C\mathbb K_r^2}}\alpha\log\frac54.
		\end{align*}
		Consequently, $\kappa_r\geq c e^{-C\mathbb K_r^2}$. In view of
		\eqref{eq:scaled-main-K-bound}, after reducing $c$ if necessary, we may
		choose
		\[
		\theta_0:=\min\left\{\frac12,c\exp\bigl(-C(n,R)\mathbb K^2\bigr)\right\}.
		\]
		Since
		$\|\widetilde u\|_{L^\infty(B_1)}
		\leq\|\widetilde u\|_{L^\infty(B_{30})}$ and
		$\kappa_r\geq\theta_0$, replacing $\kappa_r$ by $\theta_0$ on the
		right-hand side of \eqref{eq:scaled-main-threeball} only enlarges it.
		Scaling back and using \eqref{eq:scaled-main-K-bound} proves
		\eqref{eq:main-threeball}.
	\end{proof}
	
	\begin{remark}
		If $0<r\leq\min\{R/40,\mathbb K^{-1}\}$, then
		\eqref{eq:scaled-main-K-bound} gives $\mathbb K_r\leq C(n,R)$. The same proof
		therefore yields
		\[
		\|u\|_{L^\infty(B_{3r})}
		\leq C(n,R)\|u\|_{L^\infty(B_r)}^{\theta_0}
		\|u\|_{L^\infty(B_{30r})}^{1-\theta_0},
		\]
		where $\theta_0$ depends only on $n$ and $R$.
	\end{remark}
	
	\section{Vanishing order of solutions}
	\label{sec:vanishing}
	
	In this section, we prove Theorem~\ref{thm:quantitative-vanishing-order-final}.
	We apply the three-ball inequality on balls of radius comparable to
	$\mathbb K^{-1}$ and estimate its interpolation exponent as the inner radius
	tends to zero.
	
	\begin{proof}[Proof of Theorem~\ref{thm:quantitative-vanishing-order-final}]
		We take $R_0=1$ in Section~\ref{sec:threeball}. The coefficient bounds
		\eqref{eq:intro-coefficient-bounds} hold on $B_1$, so that we may use
		$\mathbb K$ from \eqref{eq:intro-K}, with $\alpha=\lceil \mathbb K\rceil$ and
		$\lambda=\mathbb K^{-4}$. Fix $0<r\leq r_*$ and choose
		\begin{equation*}			r_1=\frac r2,\qquad r_2=r_*,\qquad r_3=8r_*,
			\qquad \rho_1=r,\qquad \rho_3=9r_*.
		\end{equation*}
		These radii satisfy the hypotheses of
		Proposition~\ref{prop:Linfty-threeball-corrected}. Its interpolation exponent is
		\begin{equation*}			\kappa_1=\kappa(r/2,2r_*,8r_*).
		\end{equation*}
		By \eqref{eq:natural-scale-A-P}, $A(s)\leq C\mathbb K$ on $[r/2,8r_*]$.
		Consequently,
		\begin{equation*}			0\leq\int_s^t A(\tau)\,d\tau\leq C\mathbb K(t-s)\leq C,
			\qquad r/2\leq s\leq t\leq8r_*.
		\end{equation*}
		The exponential factors in the definition of $\kappa_1$ are thus bounded
		above and below by positive dimensional constants. In particular,
		\begin{align*}			\int_{4r_*}^{8r_*}\exp\left(-\int_{4r_*}^s A(t)\,dt\right)\frac{ds}{s}
			&\geq c\log2,\\
			\int_{r/2}^{4r_*}\exp\left(\int_s^{4r_*} A(t)\,dt\right)\frac{ds}{s}
			&\leq C\log\frac{8r_*}{r}.
		\end{align*}
		It follows from \eqref{eq:kappa-corrected-H} that
		\begin{equation}\label{eq:kappa-inverse-final}
			\kappa_1\geq\frac{c}{\log(16r_*/r)},
			\qquad \frac1{\kappa_1}\leq C\log\frac{16r_*}{r}.
		\end{equation}
		
		We next estimate the other factors in
		\eqref{eq:Linfty-threeball-corrected}. Since $r_3/(2r_2)=4$ and
		$r_*^{-n/2}=(100\mathbb K)^{n/2}$, their contribution is bounded by $\exp(C\mathbb K)$.
		Moreover, \eqref{eq:intro-coefficient-bounds} gives
		\[
		\rho^2\|W\|_{L^\infty(B_\rho)}
		+\rho^3\|V\|_{L^\infty(B_\rho)}\leq C,
		\qquad 0<\rho\leq9r_*.
		\]
		Hence $\mathcal C(\rho)\leq C$ on this interval. For the inner endpoint,
		\begin{align}			\mathcal B(r/2,r)
			&=2^{-n/2}r^{n/2}
			+2\mathbb K^{-2}\mathcal C(r) r^{n/2-1}
			\leq C,\label{eq:Bmu-small-endpoint-bound}
		\end{align}
		where we used $n\geq2$ and $r\leq r_*<1$. Similarly,
		\begin{equation}\label{eq:Bmu-outer-endpoint-bound}
			\mathcal B(8r_*,9r_*)
			=(8r_*)^{n/2}
			+8\mathbb K^{-2}\mathcal C(9r_*) r_*^{-1}(9r_*)^{n/2}
			\leq C.
		\end{equation}
		Proposition~\ref{prop:Linfty-threeball-corrected}, together with
		\eqref{eq:normalization-final}, now yields
		\begin{equation*}			1\leq\|u\|_{L^\infty(B_{r_*})}
			\leq\exp(C\mathbb K)\|u\|_{L^\infty(B_r)}^{\kappa_1}
			\mathcal M^{1-\kappa_1}.
		\end{equation*}
		In particular, $\|u\|_{L^\infty(B_r)}>0$. Taking logarithms and using
		\eqref{eq:kappa-inverse-final}, we obtain
		\begin{equation}\label{eq:quantitative-vanishing-final}
			\|u\|_{L^\infty(B_r)}
			\geq\exp\left[-C(\mathbb K+\log\mathcal M)\log\frac{16r_*}{r}\right],
			\qquad 0<r\leq r_*.
		\end{equation}
		Taking logarithms and dividing by $\log r<0$ reverses the inequality:
		\[
		\frac{\log\|u\|_{L^\infty(B_r)}}{\log r}
		\leq-C(\mathbb K+\log\mathcal M)
		\frac{\log(16r_*/r)}{\log r}.
		\]
		Since $\log(16r_*/r)/\log r\to-1$, taking the limsup gives the
		asserted bound at the origin; here $\mathbb M,\mathbb G\geq1$ and
		$\mathbb K\leq2(\mathbb M^{1/2}+\mathbb G^{1/3})$.
		
		Fix $x_0\in B_{r_*/2}$ and apply
		Proposition~\ref{prop:Linfty-threeball-corrected} to the translated
		function $y\mapsto u(x_0+y)$ with $R_0=1$ and
		\[
		r_1=\frac r2,\qquad r_2=2r_*,\qquad r_3=16r_*,
		\qquad \rho_1=r,\qquad \rho_3=18r_*.
		\]
		For $0<r\leq r_*$ these radii satisfy all the strict hypotheses of
		the proposition. Moreover,
		\[
		B_{r_*}(0)\subset B_{2r_*}(x_0),\qquad
		B_{18r_*}(x_0)\subset B_{(37/2)r_*}(0)\subset B_{10}(0),
		\]
		and $B_1(x_0)\subset B_R(0)$ because $R\geq20$. Hence the middle
		norm is at least one, the outer norm is at most $\mathcal M$, and the
		coefficient bounds used above remain valid after translation.
		
		Since $16r_*<\mathbb K^{-1}$, \eqref{eq:natural-scale-A-P} gives
		$A(s)\leq C\mathbb K$ for the translated coefficients on
		$[r/2,16r_*]$. Hence the exponential factors in the definition of
		$\kappa$ are bounded above and below by dimensional constants. If
		\[
		\kappa_{x_0}:=\kappa(r/2,4r_*,16r_*),
		\]
		then
		\[
		\gamma_1\geq\frac{c}{\alpha}\log2,
		\qquad
		\beta_1\leq\frac{C}{\alpha}\log\frac{16r_*}{r},
		\]
		and therefore
		\[
		\frac1{\kappa_{x_0}}\leq C\log\frac{32r_*}{r}.
		\]
		The endpoint estimates used in
		\eqref{eq:Bmu-small-endpoint-bound}--\eqref{eq:Bmu-outer-endpoint-bound}
		also give
		\[
		\mathcal B(r/2,r)+\mathcal B(16r_*,18r_*)\leq C.
		\]
		The polynomial middle-radius factor is absorbed into $e^{C\mathbb K}$, so
		Proposition~\ref{prop:Linfty-threeball-corrected} and the ball
		containments above yield
		\[
		1\leq\|u\|_{L^\infty(B_{2r_*}(x_0))}
		\leq e^{C\mathbb K}\|u\|_{L^\infty(B_r(x_0))}^{\kappa_{x_0}}
		\mathcal M^{1-\kappa_{x_0}}.
		\]
		Solving for the inner norm gives
		\[
		\|u\|_{L^\infty(B_r(x_0))}
		\geq \exp\left[-C(\mathbb K+\log\mathcal M)\log\frac{32r_*}{r}\right],
		\qquad 0<r\leq r_*.
		\]
		Dividing by $\log r<0$ and taking the limsup proves the stated
		pointwise estimate for every $x_0\in B_{r_*/2}$.
	\end{proof}
	
	\begin{remark}
		More generally, suppose that
		$\|u\|_{L^\infty(B_{10})}\leq \mathcal M_0$ and
		$\|u\|_{L^\infty(B_{r_*})}\geq m_*>0$.
		Applying \eqref{eq:quantitative-vanishing-final} to $u/m_*$ gives
		\[
		\|u\|_{L^\infty(B_r)}
		\geq m_*\exp\left[-C\left(\mathbb K+\log\frac{\mathcal M_0}{m_*}\right)
		\log\frac{16r_*}{r}\right],\qquad 0<r\leq r_*.
		\]
		Thus the dependence on the normalization is through the ratio
		$\mathcal M_0/m_*$.
	\end{remark}
	
	\appendix
	\section{Auxiliary identities and local regularity estimates}
	\label{sec:appendix}
	
	We first establish the weighted differentiation identities used throughout the paper.
	
	\begin{lemma}
		\label{lem:weighted-differentiation}
		Let $\gamma\geq1$, let $f\in C^2(B_R)$, and define
		\begin{equation*}			F(r):=\int_{B_r}f(x)\omega_r(x)^\gamma\dd .
		\end{equation*}
		Then, for every $0<r<R$,
		\begin{equation*}			F'(r)=\frac{2\gamma+n}{r}F(r)+\frac{1}{r}\int_{B_r}x\cdot\nabla f(x)\omega_r^\gamma\dd,
		\end{equation*}
		and also
		\begin{equation*}			F'(r)=\frac{2\gamma+n}{r}F(r)+\frac{1}{2(\gamma+1)r}\int_{B_r}\Delta f(x)\omega_r^{\gamma+1}\dd .
		\end{equation*}
	\end{lemma}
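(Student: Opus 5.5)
The plan is to prove both formulas by first passing to the unit ball. We substitute $x=ry$ with $y\in B_1$. Since $\omega_r(ry)=r^2(1-|y|^2)$, we get
\[
F(r)=r^{2\gamma+n}\int_{B_1}f(ry)(1-|y|^2)^\gamma\,dy .
\]
The domain is now fixed, so we may differentiate under the integral sign. The integrand is $C^1$ in $r$, because $f\in C^2$, and the weight does not depend on $r$. The product rule then gives
\[
F'(r)=\frac{2\gamma+n}{r}F(r)+r^{2\gamma+n}\int_{B_1}y\cdot\nabla f(ry)(1-|y|^2)^\gamma\,dy .
\]
Changing variables back, the second term becomes $\frac1r\int_{B_r}x\cdot\nabla f\,\omega_r^\gamma\dd$. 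This is the first identity.

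For the second identity we convert $x\cdot\nabla f\,\omega_r^\gamma$ into a Laplacian term. The key observation is that
\[
\nabla(\omega_r^{\gamma+1})=-2(\gamma+1)x\,\omega_r^\gamma ,
\qquad\text{so}\qquad
x\,\omega_r^\gamma=-\frac{1}{2(\gamma+1)}\nabla(\omega_r^{\gamma+1}).
\]
It follows that
\[
\int_{B_r}x\cdot\nabla f\,\omega_r^\gamma\dd=-\frac{1}{2(\gamma+1)}\int_{B_r}\nabla f\cdot\nabla(\omega_r^{\gamma+1})\dd .
\]
We integrate by parts once via Green's identity. The boundary term $\int_{\partial B_r}\omega_r^{\gamma+1}\partial_\nu f$ vanishes because $\omega_r=0$ on $\partial B_r$. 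This yields $\frac{1}{2(\gamma+1)}\int_{B_r}\Delta f\,\omega_r^{\gamma+1}\dd$. Dividing by $r$ and substituting into the first identity gives the second.

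The only point needing care is justifying differentiation under the integral and the integration by parts. Here $f$ is $C^2$ on a neighborhood of $\overline{B_r}$, since $r<R$, and $\gamma\geq1$ makes $\omega_r^{\gamma+1}$ a $C^1$ function vanishing on the boundary. So both steps are routine, and I expect no genuine obstacle.
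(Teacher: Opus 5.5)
Your proof is correct. The rescaling $x=ry$ turns $F$ into $r^{2\gamma+n}\int_{B_1}f(ry)(1-|y|^2)^\gamma\,dy$. Differentiation under the integral sign is legitimate, since $y\cdot\nabla f(ry)$ is continuous and bounded on $\overline{B_1}\times[r-\delta,r+\delta]$. The single integration by parts against $\omega_r^{\gamma+1}$, which vanishes on $\partial B_r$, gives the $\Delta f$ form with the right constant $1/(2(\gamma+1))$.

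The paper gives no argument of its own for this lemma; it only refers to \cite[Lemma 3.1]{Davey2025} and \cite[Lemma 1]{TengWangZhuBilaplace2026}. So the useful comparison is with the standard direct computation, which the paper itself uses when it differentiates $\mathbb H$ in the proof of Proposition~\ref{prop:weighted-H-three-ball-corrected}. There one differentiates the moving-domain integral directly. The boundary term drops because $\omega_r=0$ on $\partial B_r$, giving $F'(r)=2\gamma r\int_{B_r}f\,\omega_r^{\gamma-1}\dd$. Reaching the first identity from this requires splitting $r^2\omega_r^{\gamma-1}=\omega_r^\gamma+|x|^2\omega_r^{\gamma-1}$, writing $|x|^2\omega_r^{\gamma-1}=-\frac{1}{2\gamma}\,x\cdot\nabla(\omega_r^\gamma)$, and integrating by parts with $\operatorname{div}(fx)=nf+x\cdot\nabla f$.

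Your scaling argument bypasses both the moving boundary and that extra integration by parts. The first identity comes out in one line, and the only integration by parts is the conversion to $\Delta f$. It also shows that the hypothesis $\gamma\geq1$ is more than needed: your argument works verbatim for any $\gamma\geq0$. The direct route, by contrast, exhibits the intermediate formula $F'(r)=2\gamma r\int_{B_r}f\,\omega_r^{\gamma-1}\dd$. That is the form the paper uses to see that $\mathbb H$ is nondecreasing, and it is where the assumption $\gamma\geq1$ (boundedness of $\omega_r^{\gamma-1}$) is naturally used.
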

	
	\begin{proof}
		See \cite[Lemma 3.1]{Davey2025} or \cite[Lemma 1]{TengWangZhuBilaplace2026}.
	\end{proof}
	
	We now establish the local regularity estimates used in the proof of the
	$L^\infty$ three-ball inequality. Throughout this part, $u$ solves
	\eqref{eq:main-equation}, $0<R_0<R$, and $\mathcal C(\rho)$ is defined in
	\eqref{eq:C-rho-def}. We just state the results used here and the proof can be found in \cite[Lemma 17]{TengWangZhuBilaplace2026}.
	
	\begin{lemma}\label{lem:W4p-lower-order}
		Let $1<p<\infty$ and $0<r<\rho<R_0$. For $j=1,2,3,4$, one has
		\begin{equation*}			\|D^ju\|_{L^p(B_r)}
			\leq C(n,p,R_0)\mathcal C(\rho)^{j/2}(\rho-r)^{-j}
			\|u\|_{L^p(B_\rho)}.
		\end{equation*}
		In particular,
		\begin{equation*}			\|u\|_{W^{4,p}(B_r)}
			\leq C(n,p,R_0)\mathcal C(\rho)^2(\rho-r)^{-4}
			\|u\|_{L^p(B_\rho)}.
		\end{equation*}
	\end{lemma}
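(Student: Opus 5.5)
The plan is a scaling argument combined with standard interior $L^p$ estimates for the Laplacian, iterated on a family of nested balls. First we rescale: put $d:=\rho-r$, fix a point $x_1\in B_r$, and set $v(y):=u(x_1+dy)$ for $y\in B_1$. Then
\[
\Delta^2v+d^2W(x_1+d\,\cdot)\,\Delta v+d^3V(x_1+d\,\cdot)\cdot\nabla v=0 .
\]
The rescaled coefficients have sup norms at most $d^2\|W\|_{L^\infty(B_\rho)}\le\rho^2\|W\|_{L^\infty(B_\rho)}$ and $d^3\|V\|_{L^\infty(B_\rho)}$. Both are therefore controlled by $\mathcal C(\rho)$. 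Estimates on unit balls then give $\|D^jv\|_{L^p(B_{1/2})}\le C\,\mathcal C(\rho)^{j/2}\|v\|_{L^p(B_1)}$. Scaling back, and covering $B_r$ by finitely overlapping balls $B_{d/2}(x_1)$, produces the factor $d^{-j}$. If $d$ is large relative to $\rho$, we instead work on the fixed domain, where $R_0$ enters the constant.

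Second, we prove the unit-scale estimate by treating the equation as a perturbed bi-Laplacian. Write $\Delta^2v=f:=-\widetilde W\Delta v-\widetilde V\cdot\nabla v$. Interior Calderón--Zygmund estimates for $\Delta^2$ on nested balls $B_{s}\subset B_{t}$ give
\[
\|D^4v\|_{L^p(B_s)}\le C\|f\|_{L^p(B_t)}+C(t-s)^{-4}\|v\|_{L^p(B_t)} .
\]
The intermediate derivatives $D^jv$ for $j=1,2,3$ are then handled by the Gagliardo--Nirenberg interpolation $\|D^jv\|\le\varepsilon^{4-j}\|D^4v\|+C\varepsilon^{-j}\|v\|$ on balls.

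The main obstacle is obtaining the sharp powers $\mathcal C^{j/2}$ rather than some larger power. The perturbation $f$ is small only when $\|\widetilde W\|\lesssim1$ and $\|\widetilde V\|\lesssim1$. To handle this, we subdivide once more at the natural length scale $\ell:=\mathcal C(\rho)^{-1/2}$. On balls of radius comparable to $\ell$, the coefficients rescale to $\ell^2\widetilde W$ and $\ell^3\widetilde V$, whose sizes are $\lesssim\mathcal C^{0}$ and $\lesssim\mathcal C^{-1/2}\le 1$. Hence a Giaquinta-type absorption (the hole-filling iteration lemma over radii $s<t$) closes with constants depending only on $n,p$. Each derivative then costs a factor $\ell^{-1}=\mathcal C^{1/2}$, and summing over a finitely overlapping cover of $B_{1/2}$ by balls of radius $\ell$ preserves the $L^p$ norms. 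This yields $\mathcal C^{j/2}$.

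Finally, we sum over $j\le4$. The bound $\mathcal C^{j/2}(\rho-r)^{-j}\le C(R_0)\,\mathcal C^2(\rho-r)^{-4}$ holds since $\rho-r<R_0$ and $\mathcal C\ge1$, which gives the $W^{4,p}$ statement.
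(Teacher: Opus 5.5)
Your proof is correct. The paper gives no proof of its own and quotes the lemma from Lemma~17 of \cite{TengWangZhuBilaplace2026}. Your argument is the standard way such bounds are proved: rescale by $d=\rho-r$, get the unit-scale bound from interior Calder\'on--Zygmund estimates for $\Delta^2$ at the natural length $\mathcal C(\rho)^{-1/2}$, and then cover $B_r$.

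Two simplifications are available. First, the separate case ``$d$ large relative to $\rho$'' is unnecessary, since $B_d(x_1)\subset B_\rho$ for every $x_1\in B_r$. So the derivative bounds hold with $C=C(n,p)$, and $R_0$ enters only through $(\rho-r)^{4-j}\le R_0^{4-j}$ in the $W^{4,p}$ summation. Second, smallness of the perturbation is not needed: bounded lower-order terms are already absorbed by interpolation.
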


	\begin{lemma}\label{lem:local-Linfty-L2-new}
		For $0<r<\rho<R_0$,
		\begin{equation*}			\|u\|_{L^\infty(B_r)}
			\leq C(n,R_0)\mathcal C(\rho)^{n/4}(\rho-r)^{-n/2}
			\|u\|_{L^2(B_\rho)}.
		\end{equation*}
	\end{lemma}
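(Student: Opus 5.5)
We deduce Lemma~\ref{lem:local-Linfty-L2-new} from the $W^{k,p}$ interior estimates of Lemma~\ref{lem:W4p-lower-order}, combined with Sobolev embedding. The idea is to raise the integrability step by step from $L^2$ to a high $L^p$, and then pass to $L^\infty$ by Morrey's inequality.

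\textbf{Reduction to unit scale.} Replacing $u$ by $y\mapsto u(x_0+\delta y)$ with $\delta=\rho-r$ and $x_0\in B_r$ changes $W$ to $\delta^2W(x_0+\delta\,\cdot)$ and $V$ to $\delta^3V(x_0+\delta\,\cdot)$. Hence the corresponding quantity $\mathcal C$ for the rescaled equation on the unit ball is at most $\mathcal C(\rho)$. It therefore suffices to prove, for every such $x_0$,
\[
|u(x_0)|\le C\,\mathcal C^{n/4}\|u\|_{L^2(B_1)} .
\]
Scaling back produces exactly the factor $(\rho-r)^{-n/2}$.

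\textbf{Bootstrap in integrability.} Fix a chain of radii $1=s_0>s_1>\dots>s_m=1/2$, with $m=m(n)$. Apply Lemma~\ref{lem:W4p-lower-order} with $j\le4$:
\[
\|u\|_{W^{4,p_k}(B_{s_{k+1}})}\le C\,\mathcal C^{2}\|u\|_{L^{p_k}(B_{s_k})} .
\]
Sobolev embedding then gives $\|u\|_{L^{p_{k+1}}}\lesssim\|u\|_{W^{4,p_k}}$, where $1/p_{k+1}=1/p_k-4/n$ (or any finite exponent once $4p_k\ge n$). After at most $\lceil n/8\rceil+1$ steps we reach $p>n/4$, and Morrey's inequality yields the $L^\infty$ bound.

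\textbf{Main obstacle: the power of $\mathcal C$.} The naive iteration loses $\mathcal C^{2}$ at every step, which gives $\mathcal C^{2m}$ instead of $\mathcal C^{n/4}$. The fix is to use the sharper bound of Lemma~\ref{lem:W4p-lower-order}, namely $\|D^ju\|_{L^p}\lesssim\mathcal C^{j/2}\|u\|_{L^p}$: each derivative costs $\mathcal C^{1/2}$. Only $n/2$ derivatives (in the $L^2$ sense) are needed to control $L^\infty$, which suggests the exponent $\tfrac12\cdot\tfrac n2=\tfrac n4$.

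Concretely, I would rescale once more, to the natural length scale $\ell=\mathcal C^{-1/2}\le1$. On balls of radius $\ell$ the rescaled coefficients have $\mathcal C\le C$, since $\ell^2\|W\|\le\ell^2\mathcal C\le1$ and $\ell^3\|V\|\le\ell^3\mathcal C\le1$. Running the bootstrap above at that scale, with constants independent of $\mathcal C$, gives
\[
|u(x_0)|\le C\,\ell^{-n/2}\|u\|_{L^2(B_\ell(x_0))}=C\,\mathcal C^{n/4}\|u\|_{L^2(B_\ell(x_0))}.
\]
The right-hand side is at most $C\,\mathcal C^{n/4}\|u\|_{L^2(B_1)}$, which is the claimed estimate. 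This avoids tracking the powers of $\mathcal C$ through the iteration entirely. The only step to check carefully is the $\ell$-rescaled coefficient bounds: they hold because $\mathcal C\ge1$ and $\ell\le1$.
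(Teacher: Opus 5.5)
Your proof is correct. The paper does not prove this lemma in the text; it only refers to Lemma~17 of Teng--Wang--Zhu, so there is no in-paper argument to compare against.

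Your route is the standard one and yields exactly the factor $\mathcal C(\rho)^{n/4}(\rho-r)^{-n/2}$. You rescale a ball around $x_0$ of size comparable to $\rho-r$ to unit size. You rescale again to the length $\ell=\mathcal C^{-1/2}$, at which $\ell^2\|W\|_{L^\infty}\le 1$ and $\ell^3\|V\|_{L^\infty}\le 1$. You then run Lemma~\ref{lem:W4p-lower-order}, Sobolev embedding and Morrey with constants independent of the coefficients, and undo both scalings. The two rescalings combine into the single length $(\rho-r)\mathcal C(\rho)^{-1/2}$, which is the scaling already encoded in the $\mathcal C(\rho)^{j/2}(\rho-r)^{-j}$ form of Lemma~\ref{lem:W4p-lower-order}. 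You also correctly saw that iterating the $\mathcal C^2$ loss directly would give the wrong power, and that the rescaling, not the sharper derivative count, is what fixes it.

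One piece of bookkeeping should be added. Lemma~\ref{lem:W4p-lower-order} requires the equation to hold on a ball strictly larger than the $B_{R_0}$ in which you work. So rescale $B_{(\rho-r)/2}(x_0)$ rather than $B_{\rho-r}(x_0)$. The rescaled equation then holds on $B_2$ with coefficient quantity still at most $\mathcal C(\rho)$, and you can apply the lemma at both scales with $R_0=3/2$. This costs only a factor $2^{n/2}$ and gives a constant depending on $n$ alone.
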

	
	\begin{lemma}\label{lem:local-W22-Linfty-new}
		For $0<r<\rho<R_0$,
		\begin{equation*}			\|u\|_{W^{2,2}(B_r)}
			\leq C(n,R_0)\mathcal C(\rho)(\rho-r)^{-2}\rho^{n/2}
			\|u\|_{L^\infty(B_\rho)}.
		\end{equation*}
	\end{lemma}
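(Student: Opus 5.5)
The plan is to derive this directly from Lemma~\ref{lem:W4p-lower-order} with $p=2$, and then trade the $L^2$ norm on the larger ball for the $L^\infty$ norm. Write
\[
\|u\|_{W^{2,2}(B_r)}\leq \|u\|_{L^2(B_r)}+\|Du\|_{L^2(B_r)}+\|D^2u\|_{L^2(B_r)}
\]
(up to a dimensional constant depending on the chosen norm convention), and estimate the three pieces separately.

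\emph{Derivative terms.} For $j=1,2$, Lemma~\ref{lem:W4p-lower-order} with $p=2$ gives
\[
\|D^ju\|_{L^2(B_r)}\leq C(n,R_0)\,\mathcal C(\rho)^{j/2}(\rho-r)^{-j}\|u\|_{L^2(B_\rho)} .
\]
We need to reduce every factor to the common form $\mathcal C(\rho)(\rho-r)^{-2}$, using two elementary facts. First, $\mathcal C(\rho)\geq1$ by \eqref{eq:C-rho-def}, so $\mathcal C(\rho)^{j/2}\leq\mathcal C(\rho)$ for $j\leq2$. Second, $0<\rho-r<R_0$, so $(\rho-r)^{-j}\leq R_0^{2-j}(\rho-r)^{-2}$ for $j=0,1,2$. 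These bounds give
\[
\|D^ju\|_{L^2(B_r)}\leq C(n,R_0)\,\mathcal C(\rho)(\rho-r)^{-2}\|u\|_{L^2(B_\rho)},\qquad j=1,2 .
\]

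\emph{Zeroth-order term.} The same two facts handle $j=0$: since $B_r\subset B_\rho$,
\[
\|u\|_{L^2(B_r)}\leq\|u\|_{L^2(B_\rho)}\leq R_0^2\,\mathcal C(\rho)(\rho-r)^{-2}\|u\|_{L^2(B_\rho)} .
\]

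\emph{Passing to $L^\infty$.} Finally, $\|u\|_{L^2(B_\rho)}\leq|B_1|^{1/2}\rho^{n/2}\|u\|_{L^\infty(B_\rho)}$. Summing the three estimates then yields the claimed bound with a constant depending only on $n$ and $R_0$.

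There is no real obstacle here. The only point needing care is that the lower powers of $\mathcal C(\rho)$ and $(\rho-r)^{-1}$ are absorbed using $\mathcal C\geq1$ and $\rho-r<R_0$, and that this absorption is what makes the constant depend on $R_0$. All the analytic content sits in Lemma~\ref{lem:W4p-lower-order}, which is quoted from \cite[Lemma 17]{TengWangZhuBilaplace2026}.
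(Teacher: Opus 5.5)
Your argument is correct. Lemma~\ref{lem:W4p-lower-order} with $p=2$, the absorptions $\mathcal C(\rho)^{j/2}\leq\mathcal C(\rho)$ and $(\rho-r)^{-j}\leq R_0^{2-j}(\rho-r)^{-2}$ for $j\leq 2$, and the bound $\|u\|_{L^2(B_\rho)}\leq|B_1|^{1/2}\rho^{n/2}\|u\|_{L^\infty(B_\rho)}$ together give exactly the stated estimate with $C=C(n,R_0)$. The paper gives no argument of its own here and simply defers to \cite[Lemma 17]{TengWangZhuBilaplace2026}; your reduction to the quoted $W^{4,p}$ estimate plus H\"older is the natural way the lemma follows from what the paper states.
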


\end{document}